\documentclass[11pt]{amsart}
\usepackage{amsmath,amssymb,enumerate}
\newtheorem{prop}{Proposition}[section]
\newtheorem{coro}[prop]{Corollary}

\newtheorem{lem}[prop]{Lemma}
\newtheorem{rem}[prop]{Remark}
\newtheorem{exa}[prop]{Example}

\newtheorem{defi}[prop]{Definition}
\newtheorem{theo}[prop]{Theorem}

\newcommand{\B}{\mathbb B}

\newcommand{\R}{\mathbb R}

\newcommand{\C}{\mathbb C}
\newcommand{\N}{\mathbb N}

\newcommand{\e}{\varepsilon}
\newcommand{\f}{\varphi}

\newcommand{\p}{\psi}

 \newcommand \al {\alpha}
 \newcommand \de {\delta}

 \newcommand \Om {\Omega}

 \numberwithin{equation}{section}

\newenvironment{ack}
{\vskip .2cm \noindent {{\bf Acknowledgements.}}}{\hfill\break}

 \begin{document}

  \title[Degenerate complex Monge-Amp\`ere flows]{Weak solutions to degenerate complex Monge-Amp\`ere flows I}

\setcounter{tocdepth}{1}

  \author{ Philippe Eyssidieux, Vincent Guedj, Ahmed Zeriahi} 

\address{Universit\'e Joseph Fourier et Institut Universitaire de France, France}

\email{Philippe.Eyssidieux@ujf-grenoble.fr }

\address{Institut de Math\'ematiques de Toulouse et Institut Universitaire de France,   \\ Universit\'e Paul Sabatier \\
118 route de Narbonne \\
F-31062 Toulouse cedex 09\\}

\email{vincent.guedj@math.univ-toulouse.fr}

\address{Institut de Math\'ematiques de Toulouse,   \\ Universit\'e Paul Sabatier \\
118 route de Narbonne \\
F-31062 Toulouse cedex 09\\}

\email{zeriahi@math.univ-toulouse.fr}

 \date{\today}

\begin{abstract}  
Studying the (long-term) behavior of the K\"ahler-Ricci flow on mildly singular varieties, one is naturally lead to study weak solutions of degenerate parabolic complex Monge-Amp\`ere equations.

  The purpose of this article, the first of a series on this subject, is to develop a viscosity theory for
degenerate complex Monge-Amp\`ere flows in domains of $\C^n$. 
\end{abstract}

 \maketitle

\tableofcontents

\newpage

\section*{Introduction}
  
  The study of the (long-term) behavior of the K\"ahler-Ricci flow on mildly singular varieties in relation to the Minimal Model Program
was undertaken by J. Song and G. Tian \cite{ST1,ST2} and requires 
a theory of weak solutions for certain  degenerate parabolic complex Monge-Amp\`ere equations modelled on:
\begin{equation}\label{krf}
 \frac{\partial \phi}{\partial t}+\phi= \log \frac{(dd^c\phi)^n}{V}
\end{equation}
where $V$ is volume form and $\phi$ a $t$-dependant K\"ahler potential on a compact K\"ahler manifold. The approach in \cite{ST2} is to regularize the equation
and take limits of the solutions of the regularized equation with uniform
higher order estimates. But as far as the existence and uniqueness statements in \cite{ST2} are concerned, we believe that 
a zeroth order approach would be both simpler and more efficient.

There is a well established pluripotential 
theory of weak solutions to elliptic complex Monge-Amp\`ere equations,
following the pionnering work of Bedford and Taylor \cite{BT76,BT82} in the local case (domains in $\C^n$). 
A complementary viscosity approach has been developed only recently in \cite{HL09,EGZ11,W12, EGZ13} both in the local and the global case (compact K\"ahler manifolds).

Suprisingly no similar theory has ever been developed on the parabolic side.  The most significant reference for a parabolic flow of plurisubharmonic functions on pseudoconvex domains
 is \cite{Gav} but the  flow studied there takes  the form 
\begin{equation}
 \label{gavflow}  \frac{\partial \phi}{\partial t}= ((dd^c\phi)^n )^{1/n} 
\end{equation}
which does not make sense in the global case. 
The purpose of this article, the first of a series on this subject, is to develop a viscosity theory for
degenerate complex Monge-Amp\`ere flows of the form 
(\ref{krf}).

This article focuses on solving this problem in domains of $\C^n$, while its companion \cite{EGZ14}
is concerned with the global case.
More precisely  we study here the degenerate parabolic complex Monge-Amp\` ere equations
\begin{equation} \label{krflocal}
 e^{\partial_t{\f}+  F(t,z,\f_t)} \, \mu(z)-(dd^c \f_t)^n=0 \; \; \text{ in } \; \; \Omega_T
\end{equation}

  where
  \begin{itemize}
  \item $\Omega \Subset \C^ n$ is a smooth bounded strongly pseudoconvex domain, 
 \item $T \in ]0,+\infty]$;
  \item $F (t,z,r)$ is continuous in $[0,T[ \times \Omega \times \R$ and non decreasing in $r$,
  \item $\mu (z)\geq 0$ is a bounded continuous volume form on $\Omega$,
   \item $\f : \Omega_T :=  [0,T[ \times \Omega \rightarrow \R$ is the unknown function, with $\f_t: = \f (t,\cdot)$. 
   \end{itemize}
Our plan is to adapt the viscosity approach developped by P.L.~Lions and al.
(see \cite{IL90,CIL92}) to the complex case, using the elliptic side of the theory which was developped in \cite{EGZ11}. It should be noted that the
method used in \cite{ST2} is a version of the classical PDE method of vanishing viscosity which was superseded by the theory of viscosity solutions. 

\smallskip

After developing the appropriate definitions of (viscosity) subsolution, supersolution and solution
in the {\it first section}, we establish 
in the {\it second section} an important connection with the elliptic side of the theory:

\medskip

\noindent{\bf{Theorem A.}} 
{\em  
 If $u$ is a bounded subsolution of the above degenerate parabolic complex Monge-Amp\`ere flow (\ref{krflocal}) in $]0,T[ \times \Omega$, then 
$z \mapsto u(t,z)$ is plurisubharmonic in $\Omega$ for all $t > 0$.
}

\medskip

As is often the case in the viscosity theory, one of our main technical tools is the following comparison principle, which we establish in the {\it third section}:

\medskip

\noindent{\bf{Theorem B.}} 
{\em  
If $u$ (resp. $v$) is a bounded subsolution (resp. supersolution) 
of the above degenerate parabolic equation
then
$$
\max_{\Omega_T} (u-v) \leq \max\{0, \max_{\partial_0 \Omega_T} (u-v)\}.
$$
}

\medskip

Here $\partial_0 \Omega_T=\left(\{0\} \times \overline{\Omega}\right) \cup \left([0, T[ \times \partial \Omega\right)$ denotes the parabolic boundary of $\Omega_T$.
We actually establish several variants of the comparison principle
 (see Theorem \ref{thm:comparison}  and the remarks following its proof).

\smallskip

In the {\it fourth section} we construct barriers at each point of the parabolic boundary and use the Perron method to eventually show the existence of a viscosity solution to the Cauchy-Dirichlet problem for the Complex Monge-Amp\`ere flow (\ref{krflocal}) (see Section 1):

\medskip

\noindent{\bf{Theorem C.}} 
{\em  
 Let  $\f_0$ be a continuous plurisubharmonic function on $\overline{\Omega}$ 
 such that $(\f_0,\mu)$ is admissible in the sense of definition \ref{def:adm}. 
 
The Cauchy-Dirichlet problem for the parabolic complex Monge-Amp\`ere equation  
with initial data $\f_0$ admits a unique viscosity solution $\f (t,x)$ in infinite time;
it is the upper envelope of all subsolutions. 
}

\medskip

We give simple criteria in Lemma \ref{lem:admissible} to decide whether a data
$(\f_0,\mu)$ is admissible. This is notably always the case when $\mu>0$ is positive,
while we can not expect the existence of a supersolution if $\mu$ vanishes and
$\f_0$ is not a maximal plurisubharmonic function.

We finally study the long term behavior of the flow in  {\it section five}, showing
that it asymptotically recovers the solution of the corresponding elliptic Dirichlet
problem (see Theorems  \ref{thm:conv<0} and \ref{thm:conv}):

\medskip

\noindent{\bf{Theorem D.}} 
{\em  
Assume $(\f_0,\mu)$ is admissible and $F=F(z,r)$ is time independent.
 The complex Monge-Amp\` ere flow $\f_t$ starting at $\f_0$ uniformly converges, 
as $t \rightarrow +\infty$, to the solution $\p$
of the Dirichlet problem for the degenerate elliptic Monge-Amp\`ere equation
$$
(dd^c \p)^n=e^{F(z,\p)} \mu(z) \text{ in } \Omega,
\; \; \text{ with }
\p_{| \partial \Omega}=\f_0.
$$
}

\medskip

The solution $\p$ to the above elliptic Dirichlet problem is well known to exist  
in the pluripotential sense \cite{Ceg84}, while its existence in the viscosity sense
was established in \cite{HL09,EGZ11,W12}.

\medskip
Pluripotential theory actually suggests that the solutions to (\ref{krflocal}) should be defined 
as upper semi continuous $t$-dependant plurisubharmonic functions which are a.e. derivable w.r.t to 
the time variable and satisfy the equation almost everywhere where $(dd^c\phi)^n$ is replaced by the Monge-Amp\`ere operator. We did not try and phrase 
such a definition in a precise and usable way nor determine how it connects to the viscosity concepts developped here. 
\medskip
\begin{ack} 
We thank Cyril Imbert for useful discussions. 
\end{ack}

\section{Parabolic viscosity concepts}

\subsection{A Cauchy-Dirichlet problem}
Let $\Omega \Subset \C^n$ be  a bounded strongly pseudoconvex domain and $T > 0$ a fixed number and define
$$
\Omega_T := ]0,T[ \times \Omega.
$$
We are studying the parabolic complex Monge-Amp\`ere  equation (\ref{eq:PMF}) 
 \begin{equation} \label{eq:PMF} 
e^{\partial_t{\f}+  F(t,z,\f)} \, \mu(t,z)-(dd^c \f_t)^n=0 \; \; \text{ in } \; \; \Omega_T,
  \end{equation}
where $F (t,z,r)$ is continuous in $[0,T[ \times \Omega \times \R$ and non decreasing in $r$.
The measure $\mu=\mu(t,z) = \mu_t (z) \geq 0$ is assumed to be a bounded continuous non negative volume form depending continuously on the time variable $t$.
It will be often necessary to also impose further that either $\mu>0$ is positive, or that
$\mu=f(z) \nu(t,z)$ with $f(z) \geq 0$ and $\nu=\nu(t,z)>0$. The positive part of the
density can then be absorbed in $F$. For simplicity, we have therefore stated our main results in the introduction in the case when $\mu=\mu(z) \geq 0$ is time independent
but will use  a slightly larger framework in the bulk of the article.

We call this equation the parabolic Monge-Amp\`ere equation associated to $(F,\mu)$ in $\Omega_T$.

 Recall that the parabolic boundary of $\Omega_T$ is defined as the set
 $$
 \partial_0 \Omega_T := \left(\{0\} \times \overline{\Omega}\right) \cup \left([0, T[ \times \partial \Omega\right).
 $$ 
  We want to study the Cauchy-Dirichlet problem for (\ref{eq:PMF})  with the following Cauchy-Dirichlet conditions:
 \begin{eqnarray} \label{eq:CD-PMF}
\left\{ \begin{array}{rcl}
  \f (0,z) &=& \f_0 (z), \, \, \, \, (0,z) \in \{0\} \times \overline{\Omega}, \\
  \f (t,\zeta) & = & h (\zeta), \ \, \,  (t,\zeta) \in [0,T[ \times {\partial \Omega},
\end{array}
\right.
 \end{eqnarray}
 where $h : \partial \Omega \rightarrow \R$ is a continuous function (the Dirichlet boundary data) and $\f_0$ is a bounded plurisubharmonic  function in $\Omega$  (the Cauchy data), which extends continuously
to $\overline{\Omega}$.

Thus $h$ is actually determined by the boundary values of $\f_0$.  
Such a function $\f_0$ will be called the {\it Cauchy-Dirichlet data} for the parabolic complex Monge-Amp\`ere equation  (\ref{eq:PMF}) and we will simply write
$$
\f_{| \partial \Omega_T}=\f_0.
$$

\subsection{Parabolic sub/super-solutions}

We assume the reader has some familiarity with the elliptic side of the viscosity theory
for complex Monge-Amp\`ere equations which was developed in \cite{EGZ11}.

The definitions of subsolutions and supersolutions can be extended to the parabolic setting using upper and lower test functions as in the degenerate elliptic case.

We first define what should be a classical solution to our problem.  A classical solution to the parabolic complex Monge-Amp\`ere equation (\ref{eq:PMF}) is a continuous function $ \f : [0,T[ \times \overline{\Omega} \longrightarrow \R$ which is $C^1$ in $t$, $C^2$ in $z$  in 
$]0,T[ \times \Omega$ such that for any $t \in ]0,T[$, the function $z \longmapsto \f (t,z)$ is a (continous) plurisubharmonic function in $\Omega$ that satisfies the following equation 
$$
(dd^c \f_t)^n = e^{\partial_t \f (t,z) + F (t,z,\f (t,z))} \mu (t,z),
$$
for all $ z \in \Omega$. The function $\f$ is said to be  $C^1$ in $t$ and $C^2$ in $z$ (or $C^{(1,2)}$ in short)  in $ ]0,T[ \times \Omega$ if $(t,z) \longrightarrow \partial_t \f (t,x)$ exists and is continuous in $ ]0,T[ \times \Omega$ and the second partial derivatives  of $z \longrightarrow \f (t,z)$ with respect to $z_j$ and $\bar z_k$ exists and are continuous in all the variables $(t,z)$ in $]0,T[ \times \Omega$.

Observe that if we split this equality into two inequalities $\geq $ (resp. $\leq $), we obtain the notion of a classical subsolution (resp. supersolution) to the parabolic equation (\ref{eq:PMF}).

Now let us introduce the general definition.
\begin{defi} (Test functions)
 Let  $ w : \Omega_T \longrightarrow \R$ be any function defined in $\Omega_T$ and $(t_0,z_0) \in \Omega_T$ a given point. 
  An upper test function (resp. a lower test function) for $w$ at the point $(t_0,z_0)$ 
is  a $C^{(1,2)}$-smooth function $q$ in a neighbourhood of the point $(t_0,z_0)$ such that $  w (t_0,z_0) = q (t_0,z_0)$ and $w \leq q$ (resp. $w \geq q$) in a neighbourhood of $(t_0,z_0)$.
 We will write for short $w \leq_{(t_0,z_0)} q$ (resp.  $w \geq_{(t_0,z_0)} q$).  
 \end{defi}
 
 \begin{defi}
 1. A  function  $ u : [0,T[ \times \overline{\Omega} \longrightarrow \R$ is said to be a (viscosity) subsolution to the parabolic complex Monge-Amp\`ere equation (\ref{eq:PMF}) in $]0,T[ \times \Omega$ if $u$ is upper semi-continuous in $[0,T[ \times \overline{\Omega}$ and for any point $ (t_0,z_0) \in \Omega_T := ]0,T[ \times \Omega$ and any upper test function $q$ for $u$ at $(t_0,z_0)$, we have
 $$
 (dd^c q_{t_0} (z_0))^n  \geq e^{\partial_t q (t_0,z_0) + F (t_0,z_0,q (t_0,z_0))} \mu (t_0,z_0).
 $$
 In this case we also say that $u$ satisfies the differential inequality $(dd^c \f_t)^n \geq e^{\partial_t \f (t,z) + F (t,z,\f (t,z))} \mu (t,z)$ in the viscosity sense in $\Omega_T$.
 
 2. A  function  $ v : [0,T[ \times \overline{\Omega} \longrightarrow \R$ is said to be a (viscosity) 
supersolution to the parabolic complex Monge-Amp\`ere equation (\ref{eq:PMF}) 
in $\Omega_T = ]0,T[ \times \Omega$ if $v$ is lower semi-continuous in $\Omega_T$ and for any point 
$ (t_0,z_0) \in ]0,T[ \times \Omega$ and any lower test  function $q$ for $v$ at $(t_0,z_0)$ such that $dd^c q_{t_0}(z_0) \geq 0$, we have 
 $$
 (dd^c q_{t_0})^n (z_0) \leq e^{\partial_{t} q (t_0,z_0) + F (t_0,z_0,q (t_0,z_0))} \mu (t_0,z_0).
 $$
  In this case we also say that $v$ satisfies the differential inequality $(dd^c \f_t)^n \leq e^{\partial_t \f (t,z) + F (t,z,\f (t,z))} \mu (t,z)$ in the viscosity sense in $\Omega_T$.
  
 3. A function $ \f : [0,T[ \times \overline{\Omega} \longrightarrow \R$ is said to be a (viscosity) solution to the parabolic complex Monge-Amp\`ere equation (\ref{eq:PMF}) in $]0,T[ \times \Omega$ if it is a subsolution and a super solution to the parabolic complex Monge-Amp\`ere equation (\ref{eq:PMF}) in $]0,T[ \times \Omega$. Hence $\f$ is continuous in $[0,T[ \times \overline{\Omega}$.
  \end{defi}

We let the reader check that a classical (sub/super) solution of equation  (\ref{eq:PMF})  is a viscosity
(sub/super) solution.

\begin{rem} \label{rem:parab=ellipt}
In order to fit into the framework of viscosity theory, we consider the function
$H : [0,T[ \times \Omega \times \R \times \R \times \R^n \times \mathcal S_{2 n} \longrightarrow \R \cup \{+ \infty\}$ defined by
$$
\left\{ \begin{array}{rcl}
  H (t,z,r,\tau,p,Q) &:=&  e^{\tau + F (t,z,r)} \mu(t,z) - (dd^c Q)^n, \, \, \, \, \text{if} \, \, dd^c Q \geq 0, \\
  H (t,z,r,\tau,p,Q) & := &+ \infty, \ \, \,                                        \text{if not},
\end{array}
\right.,
$$
where $dd^c Q$ is the hermitian $(1,1)$-part of the quadratic form $Q$ in $\C^ n \simeq \R^ {2 n}$.

Observe that the  function $H$ is lower semi-continuous in the set $[0,T[ \times \Omega \times \R \times \R \times \R^n \times \mathcal S_{2 n}$, continuous in its domain  
$$
Dom H := \{H < + \infty\} = [0,T[ \times \Omega \times \R \times \R \times \R^n \times \{Q \in \mathcal S_{2 n} ; dd^c Q \geq 0\}
$$ 
and is degenerate elliptic in the sense of \cite{CIL92}. Moreover it is non decreasing in the $r$ variable. We call it the {\it Hamilton function} of the parabolic complex Monge-Amp\`ere equation (\ref{eq:PMF}).

Observe that if $u $ is a subsolution (resp. a supersolution) of the parabolic equation $H = 0$ then it is a subsolution of the  degenerate elliptic equation
$H = 0$ in $2 n + 1$ variables $(t,z) \in ]0,T[ \times \Omega \subset \R^ {2 n + 1}$ of a special type which does not depend on the gradient w.r.t. $z$ nor on the second derivative w.r.t. $t$. Actually the two notions are equivalent  but we will not use this (see \cite{CIL92}).

The notions of subsolutions and supersolutions  for the parabolic equation $H = 0$ as defined in 
\cite{CIL92} are exactly the one defined above.
 
However as far as supersolutions are concerned, it is more useful to work with the  finite Hamilton function $H_+$, where
$$
  H_+(t,z,r,\tau,p,Q) :=  e^{\tau + F (t,z,r)} \mu(t,z) - (dd^c Q)_+^n,                                  
$$
and $(dd^c Q)_+ = dd^ c Q$ if $dd^ c Q \geq 0$ and $(dd^c Q)_+ = 0$ if not.

Observe that $H_+ : [0,T[ \times \Omega \times \R \times \R \times \R^n \times \mathcal S_{2 n} \longrightarrow \R$ is an upper semi-continuous and finite Hamilton function such that $H_+ = H$ in $Dom H$, the domain of $H$ .

Therefore most of the general principles of the viscosity method as explained in \cite{CIL92} can be also applied here (at least formally).
On the other hand we have to be careful since there is no symmetry between subsolutions and supersolutions.

It follows from \cite{EGZ11} that if $u$ is a subsolution to the parabolic equation $H = 0$, any parabolic upper test function $q$ for $u$ at $(t_0,z_0)$ satisfies the condition $dd^ c q_{t_0} (z_0) \geq 0$. 
Hence $u$ is a subsolution to the parabolic equation $H_+ = 0$, but the converse is not true unless $\mu > 0$ (see  \cite{EGZ11}).
\end{rem}

Since the fundamental Jensen-Ishii's maximum principle will be stated in terms of semi-jets, it is 
 convenient to use these notions which we now introduce following \cite{CIL92},
in order to characterize as well the notions of sub/super solutions.

\smallskip

\begin{defi}  Let  $ u : \Omega_T \longrightarrow \R$ be a fixed function.  
For $(t_0,z_0) \in \Omega_T$, the parabolic second order superjet of $u$ at $(t_0,z_0)$ is the set of $(\tau,p,Q) \in \R \times \R^{2 n} \times \mathcal S_{2 n} $ such that for $(t,z) \in \Omega_T$, 
\begin{eqnarray*} \label{eq:superjet} 
  u (t,z) & \leq & u (t_0,z_0) +  \tau (t-t_0) + o (\vert t-t_0\vert) \\
  &+&  \nonumber  \langle p, z - z_0\rangle + \frac{1}{2} \langle Q (z - z_0), z-z_0\rangle   
+ o (\vert z - z_0\vert^2). 
\end{eqnarray*}
 We let $\mathcal P^{2,+} u (t_0,z_0)$ denote the set of parabolic second order superjets of $u$ at $(t_0,z_0)$.
We define in the same way  the  set  $\mathcal P^{2,-} u (t_0,z_0)$ 
of  parabolic second order subjets of $u$ at $(t_0,z_0)$ by 
$$
\mathcal P^{2,-} u (t_0,z_0) = - \mathcal P^{2,+} (-u) (t_0,z_0).
$$  
 The set of parabolic second order jets of $u$ at $(t_0,z_0)$ is defined by
$$
\mathcal P^{2} u (t_0,z_0) = \mathcal P^{2,+} u (t_0,z_0) \cap \mathcal P^{2,-} u (t_0,z_0).
$$
 \end{defi}
 
 We will need a slightly more general notion (see \cite{CIL92}).   
 
 \begin{defi}\label{def:closedparasuperjet}  
Let  $ u : \Omega_T \longrightarrow \R$ be a fixed function and  $(t_0,z_0) \in \Omega_T$. The set   $\bar{\mathcal P}^{2,+} u (t_0,z_0)$ of approximate parabolic second order superjets of $u$ at $(t_0,z_0)$  is defined as the set  of $(\tau,p,Q) \in \R \times \R^{2 n} \times \mathcal S_{2 n}$ such that there exists  a sequence $(t_j,z_j) \in ]0,T[ \times \Omega$ converging to $(t_0,z_0)$, such that $ u (t_j,z_j) \to u (t_0,z_0)$ and a sequence $(\tau_j,p_j,Q_j) \in \mathcal P^{2,+} u (t_j,z_j)$  converging to $(\tau,p,Q)$. 

 In the same way we define the set $\bar{\mathcal P}^{2,-} u (t_0,z_0) : = - \bar{\mathcal P}^{2,+} (-u) (t_0,z_0)$ of approximate parabolic second order subjets of $u$ at $(t_0,z_0)$.
\end{defi}

 \begin{prop} 
\text{ }

1. An upper semi-continuous function $u : \Omega_T \longrightarrow \R$ is a subsolution to the parabolic equation (\ref{eq:PMF}) if and only if for all
$ (t_0,z_0) \in \Omega_T$ and  $(\tau,p,Q) \in {\mathcal P}^{2,+} u (t_0,z_0),$  we have
 \begin{equation} \label{eq:sub}
  e^ {\tau +  F (t_0,z_0, u (t_0,z_0))} \mu (t_0,z_0)  \leq (dd^c Q)^n.
 \end{equation}

 2. A lower semi-continuous function $v : \Omega_T \longrightarrow \R$  is 
a supersolution to the parabolic equation (\ref{eq:PMF})  if and only if  for all
$ (t_0,z_0) \in \Omega_T$ and  $(\tau,p,Q) \in {\mathcal P}^{2,-} v (t_0,z_0)$ such that $dd^c Q \geq 0,$ we have
 \begin{equation} \label{eq:super}
  e^ {\tau +  F (t_0, z_0,v (t_0,z_0))} \mu (t_0,z_0)   \geq (dd^c Q)^n.
 \end{equation}
 \end{prop}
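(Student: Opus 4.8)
\smallskip

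\noindent The plan is to follow the classical scheme of \cite{CIL92}, establishing the parabolic version of the equivalence between the test-function formulation and the semi-jet formulation of viscosity sub/supersolutions. The only non-formal ingredient is a \emph{realisation lemma}: if $w:\Om_T\to\R$ is upper semi-continuous and $(\tau,p,Q)\in\mathcal P^{2,+}w(t_0,z_0)$, then there is a $C^{(1,2)}$ function $q$ near $(t_0,z_0)$ with $q(t_0,z_0)=w(t_0,z_0)$, $w\le q$ near $(t_0,z_0)$, and $(\partial_t q(t_0,z_0),D_z q(t_0,z_0),D_z^2 q(t_0,z_0))=(\tau,p,Q)$; the analogue for $\mathcal P^{2,-}$ follows by passing to $-w$. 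Granting this, part~1 runs as follows. If $u$ is a subsolution and $(\tau,p,Q)\in\mathcal P^{2,+}u(t_0,z_0)$, pick such a $q$: it is an upper test function for $u$, hence by the fact recalled in Remark~\ref{rem:parab=ellipt} we automatically have $dd^c Q=dd^c q_{t_0}(z_0)\ge 0$, and the subsolution inequality for $q$ together with $q(t_0,z_0)=u(t_0,z_0)$ and $\partial_t q(t_0,z_0)=\tau$ is precisely \eqref{eq:sub}. Conversely, if \eqref{eq:sub} holds for every element of $\mathcal P^{2,+}u(t_0,z_0)$ and $q$ is any upper test function for $u$ at $(t_0,z_0)$, Taylor-expanding $q$ to first order in $t$ and to second order in $z$ shows $(\partial_t q(t_0,z_0),D_z q(t_0,z_0),D_z^2 q(t_0,z_0))\in\mathcal P^{2,+}u(t_0,z_0)$, and \eqref{eq:sub} applied to this triple is the subsolution inequality for $q$.

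Part~2 is proved identically after changing signs; the one point demanding attention is the positivity constraint on the complex Hessian. If $v$ is a supersolution and $(\tau,p,Q)\in\mathcal P^{2,-}v(t_0,z_0)$ with $dd^c Q\ge 0$, the realisation lemma applied to $-v$ produces a lower test function $q$ for $v$ at $(t_0,z_0)$ with $dd^c q_{t_0}(z_0)=dd^c Q\ge 0$, which is exactly the kind of lower test function the definition of supersolution is tested against, so it yields \eqref{eq:super}. Conversely a lower test function $q$ for $v$ with $dd^c q_{t_0}(z_0)\ge 0$ gives, by Taylor expansion, a subjet $(\tau,p,Q)\in\mathcal P^{2,-}v(t_0,z_0)$ with $dd^c Q\ge 0$, to which \eqref{eq:super} applies and returns the supersolution inequality for $q$. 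Unlike in part~1, the hypothesis $dd^c Q\ge 0$ must be carried along explicitly throughout, since (see Remark~\ref{rem:parab=ellipt}) there is no automatic positivity of the complex Hessian for lower test functions of supersolutions.

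It remains to prove the realisation lemma, which is where the real work lies. With $P(t,z):=w(t_0,z_0)+\tau(t-t_0)+\langle p,z-z_0\rangle+\tfrac12\langle Q(z-z_0),z-z_0\rangle$, the superjet relation gives $w-P\le\sigma(|t-t_0|+|z-z_0|^2)$ near $(t_0,z_0)$ for some nondecreasing $\sigma$ with $\sigma(s)=o(s)$; using $\sigma(a+b)\le\sigma(2a)+\sigma(2b)$ this is bounded by $\sigma_1(|t-t_0|)+\sigma_2(|z-z_0|)$ with $\sigma_1(s)=o(s)$ and $\sigma_2(r)=o(r^2)$. A standard averaging/mollification of these moduli produces a $C^1$ function $g_1$ on $[0,\delta)$ with $g_1(0)=g_1'(0)=0$ and $g_1\ge\sigma_1$, and a $C^2$ function $g_2$ with $g_2(0)=g_2'(0)=g_2''(0)=0$ and $g_2\ge\sigma_2$; then $q:=P+g_1(|t-t_0|)+g_2(|z-z_0|)$ is $C^{(1,2)}$, lies above $w$, and has $(1,2)$-jet $(\tau,p,Q)$ at $(t_0,z_0)$ by direct inspection. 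The main obstacle is precisely this construction of a regular enough majorant, with prescribed vanishing derivatives, for the anisotropic error term: routine but slightly delicate. One can also bypass the anisotropy altogether by invoking the equivalence recorded in Remark~\ref{rem:parab=ellipt} between the parabolic notions and the degenerate elliptic ones for $H=0$ in the $(2n+1)$ variables $(t,z)$, and then quote the realisation lemma of \cite{CIL92} verbatim.
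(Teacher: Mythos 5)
Your proposal is correct: it is the standard jets-versus-test-functions equivalence from \cite{CIL92}, which is exactly what the paper relies on here, since the paper states this proposition without proof (only the enumerated comments follow it). Your write-up supplies that standard argument faithfully — the realisation lemma with the anisotropic modulus split into an $o(|t-t_0|)$ and an $o(|z-z_0|^2)$ part, and the careful asymmetric treatment of the constraint $dd^c Q \geq 0$ for supersolutions (automatic for subsolutions by Remark \ref{rem:parab=ellipt}, carried as a hypothesis for supersolutions) — so there is nothing to object to.
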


Another way to phrase the definition of supersolutions is to require that,  for all
$ (t_0,z_0) \in \Omega_T$  and all $(\tau,p,Q) \in {\mathcal P}^{2,-} v (t_0,z_0)$, we have
$$
  e^ {\tau +  F (t_0, z_0,v (t_0,z_0))} \mu (t_0,z_0)   \geq (dd^c Q)_+^n.
$$

This statement necessitates some comments:
 \begin{enumerate}
\item The reader will easily check that when $u$ is a subsolution (resp. supersolution) to the equation (\ref{eq:PMF}), the inequalities (\ref{eq:sub}) (resp. \ref{eq:super}) are satisfied for all $(\tau,p,Q) \in \overline{\mathcal P}^{2,+} u (t_0,z_0)$ (resp. $\overline{\mathcal P}^{2,-} v (t_0,z_0)).$  
\item If for  a fixed $ z_0 \in \Omega$, the function $t \longmapsto u (t,z_0)$ is $L$-lipschitz 
 in a neighborhood of $t_0 \in ]0,T[$. Then for any $(\tau,p,Q) \in \overline{\mathcal P}^{2,+} u (t_0,z_0)$, we have $\vert \tau \vert \leq L$.
 Indeed for $\vert s \vert << 1$ and $\vert z - z_0\vert << 1$, 
 \begin{eqnarray*}
 u (t_0 + s,z) &\leq& u (t_0,z_0) + \tau s + \langle p,z - z_0\rangle \\
 &  +&  \frac{1}{2} \langle Q (z-z_0),z - z_0\rangle + o (\vert s\vert+\vert z - z_0\vert^2),
 \end{eqnarray*}
  hence $- L \vert s\vert \leq u (t_0 + s,z_0) - u (t_0,z_0)  \leq \tau s + o (\vert s\vert)$ for  $\vert s\vert $ small enough and the conclusion follows.
  \item A discontinuous viscosity solution to the equation (\ref{eq:PMF}) (in the the sense of
 \cite{Ish89}) is a function $u:\Omega_T \to [+\infty, -\infty]$ such that

i) the usc envelope $u^*$ of $u$ satisfies $\forall x, \ u^*(x)<+\infty$ and is a viscosity subsolution to (the equation (\ref{eq:PMF}),

ii) the lsc envelope $u_*$ of $u$ satisfies $\forall x, \ u_*(x)>-\infty$ and is a viscosity supersolution to the equation (\ref{eq:PMF}).
 \end{enumerate}
 
 If we consider a time independent equation, its static viscosity (sub/super) solutions (i.e.: those who are independent of the time variable) are the time independent extension of the viscosity (sub/super) solutions of the corresponding Complex Monge-Amp\`ere equation in the sense of \cite{EGZ11} where discontinuous viscosity solutions were not considered.

We introduce discontinuous viscosity solutions here for technical reasons that will be explained later on. Note that the characteristic function $u$ of $\mathbb{C}\backslash\mathbb{Q}^2$ is a discontinuous viscosity solution of  $\Delta u=0$.

\subsection{Relaxed semi-limits}
 Let $(h_j)$ be a sequence of locally uniformly bounded functions on a metric space $(Y,d)$. The upper relaxed semi-limit of  $(h_j)$ is
$$
\overline h (y) = {\limsup}^*_{j \to + \infty} h_j (x) := \lim_{j \to + \infty} \sup \{ h_k (z) ; k \geq j,  d (z,y) \leq 1 \slash j\}. 
$$
The reader will easily check that $\overline h $ is upper semi-continuous on $Y$.

We define similarly the lower relaxed semi-limit of the sequence $(h_j)$,
$$
\underline h = {\liminf}_*{_{j \to + \infty}} h_j.
$$ 
This is a lower semi-continuous function in $Y$.
Observe that 
$$
 {\liminf}_*{_{j \to + \infty} h_j} \leq (\liminf_{j \to \infty} h_j)_* \leq (\limsup_{j \to + \infty} h_j)^* \leq {\limsup}^*_{j \to + \infty} h_j.
$$

If  $(h_j)$ is a non decreasing (resp. non increasing) sequence of continuous functions on $Y$ then $\overline h = (\sup h_j)^*$
(resp. $\underline h = (\inf h_j)_*)$. Moreover if  $(h_j)$ converges locally uniformly to a continuous function $h$ on $Y$ then all these limits coincide with $h$ on $Y$.

The following stability result for viscosity sub/super-solutions is a classical and useful tool  
(see \cite{CIL92,IS12}):

\begin{lem} \label{lem:Stab}
Let $\mu^j (t,x) \geq 0$ be a sequence of continuous volume forms converging uniformly to a volume form $\mu$ on $\Omega_T$ and let $F^j$ be a sequence of continuous functions in $[0,T[ \times \Omega \times \R$ converging locally uniformly to a function $F$.
Let $(\f^j)$ be a   locally uniformly bounded sequence of real valued functions defined in $\Omega_T$. 

1. Assume that for every $j \in \N$, $\f^j$ is a viscosity subsolution to the complex Monge-Amp\` ere flow
$$
 e^{ \partial_t \f^j + F^j (t,z,\f^j)} \mu^j (t,z) - (dd^c \f^j_t)^n = 0,
$$
associated to $(F^j,\mu^j)$ in $\Omega_T$.
Then  its upper relaxed semi-limit 
$$
\overline \f = {\limsup}^*_{j \to + \infty} \f^j
$$
of the sequence $(\f_j)$ is a subsolution to the parabolic Monge-Amp\` ere equation
$$
 e^{ \partial_t \f + F (t,z,\f)} \mu - (dd^c \f_t)^n = 0, 
$$
in $\Omega_T$. 

2. Assume that for every $j \in \N$, $\f^j$ is a viscosity supersolution to the complex Monge-Amp\`ere flow associated to $(F^j,\mu^j)$ in $\Omega_T$. Then the lower relaxed semi-limit 
$$
\underline \f = {{\liminf}_*}_{j \to + \infty} \f^j
$$ 
of the sequence  $(\f^j)$ is a supersolution to the complex Monge-Amp\`ere flow associated to $(F,\mu)$ in $\Omega_T$.
\end{lem}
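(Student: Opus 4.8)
The plan is to run the classical relaxed--semilimit stability argument for viscosity (sub/super)solutions, using the parabolic test functions introduced above; I would treat the two parts in parallel, the second being formally dual to the first up to one subtlety coming from the lack of symmetry between sub-- and supersolutions. The only substantial ingredient is the standard fact (see \cite{CIL92}) that relaxed semilimits turn strict local extrema into limits of local extrema of the approximating sequence: if $(h_j)$ is locally uniformly bounded on a metric space, $\overline h = {\limsup}^*_j h_j$, and $\overline h - q$ has a strict local maximum at $y_0$ for some continuous $q$, then there are a subsequence and points $y_j \to y_0$ such that $y_j$ is a local maximum of $h_j - q$ and $h_j(y_j) \to \overline h(y_0)$; the dual statement holds for ${\liminf}_*$ and local minima. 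I would recall (or simply quote) this lemma first, then deduce both assertions from it.

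\emph{Part 1.} Fix $(t_0,z_0) \in \Omega_T$ and an upper test function $q$ for $\overline\f$ at $(t_0,z_0)$. Replacing $q$ by $q(t,z) + \delta\bigl(|z-z_0|^4 + (t-t_0)^4\bigr)$ for small $\delta>0$, I may assume the local maximum of $\overline\f - q$ at $(t_0,z_0)$ is strict, without changing $\partial_t q$ or $dd^c q_{t_0}$ at $(t_0,z_0)$. Applying the extremum lemma on a compact neighbourhood $K \subset \Omega_T$ of $(t_0,z_0)$ yields points $(t_j,z_j) \to (t_0,z_0)$, which are interior local maxima of $\f^j - q$ for $j$ large, with $\f^j(t_j,z_j) \to \overline\f(t_0,z_0)$. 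Then $q + \bigl(\f^j(t_j,z_j) - q(t_j,z_j)\bigr)$ is an upper test function for $\f^j$ at $(t_j,z_j)$, so the subsolution property of $\f^j$ gives
$$(dd^c q_{t_j}(z_j))^n \ \geq\ e^{\,\partial_t q(t_j,z_j) + F^j(t_j,z_j,\f^j(t_j,z_j))}\,\mu^j(t_j,z_j).$$
Letting $j \to +\infty$, and using that the first and second derivatives of $q$ are continuous, that $\mu^j \to \mu$ uniformly, that $F^j \to F$ locally uniformly with $F$ continuous, and that $(t_j,z_j,\f^j(t_j,z_j)) \to (t_0,z_0,\overline\f(t_0,z_0))$, I obtain
$$(dd^c q_{t_0}(z_0))^n \ \geq\ e^{\,\partial_t q(t_0,z_0) + F(t_0,z_0,\overline\f(t_0,z_0))}\,\mu(t_0,z_0),$$
which is exactly the subsolution inequality for $\overline\f$. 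Since $\overline\f$ is upper semi-continuous, it is a subsolution of the flow associated to $(F,\mu)$ in $\Omega_T$.

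\emph{Part 2.} The argument is the same with ${\liminf}_*$, local minima, and lower test functions, except for one point: a lower test function $q$ for $\underline\f$ at $(t_0,z_0)$ is only required to satisfy $dd^c q_{t_0}(z_0) \geq 0$, and this may fail at the nearby points $(t_j,z_j)$ where $\f^j - q$ attains its local minimum, so the constrained supersolution inequality for $\f^j$ need not apply there. To get around this I would use the equivalent formulation of supersolutions in terms of the finite Hamiltonian $H_+$ (see Remark~\ref{rem:parab=ellipt}): at \emph{every} local minimum $(t_j,z_j)$ of $\f^j - q$,
$$(dd^c q_{t_j}(z_j))_+^n \ \leq\ e^{\,\partial_t q(t_j,z_j) + F^j(t_j,z_j,\f^j(t_j,z_j))}\,\mu^j(t_j,z_j).$$
Passing to the limit, the right--hand side tends to $e^{\partial_t q(t_0,z_0) + F(t_0,z_0,\underline\f(t_0,z_0))}\mu(t_0,z_0) \geq 0$. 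For the left--hand side, either $dd^c q_{t_0}(z_0) > 0$, in which case $dd^c q_{t_j}(z_j) > 0$ for $j$ large and $(dd^c q_{t_j}(z_j))_+^n = (dd^c q_{t_j}(z_j))^n \to (dd^c q_{t_0}(z_0))^n$, or $dd^c q_{t_0}(z_0)$ is degenerate and $(dd^c q_{t_0}(z_0))^n = 0$; in both cases $(dd^c q_{t_0}(z_0))^n \leq e^{\partial_t q(t_0,z_0) + F(t_0,z_0,\underline\f(t_0,z_0))}\mu(t_0,z_0)$, which is the required (constrained) supersolution inequality. As $\underline\f$ is lower semi-continuous, it is a supersolution of the flow associated to $(F,\mu)$ in $\Omega_T$.

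\emph{Main difficulty.} There is no serious obstacle here: the whole scheme is routine once the extremum lemma for relaxed semilimits is at hand. The only place genuinely requiring attention is Part 2, where one must not forget that the constraint $dd^c Q \geq 0$ entering the definition of supersolutions is not stable under the approximation, and switch to the $H_+$--formulation; the remainder reduces to continuity of the data (and of the map $Q \mapsto (dd^c Q)_+^n$) together with the standard perturbation turning extrema into strict ones.
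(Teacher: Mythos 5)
Your proof is correct; the paper in fact gives no proof of Lemma \ref{lem:Stab}, merely citing \cite{CIL92,IS12}, and your argument is exactly the standard relaxed--semilimit stability proof those references supply (strict-extremum perturbation, the extremum lemma for ${\limsup}^*/{\liminf}_*$, and passage to the limit in the test-function inequalities using the locally uniform convergence of $(F^j,\mu^j)$). You also correctly isolated and handled the only real subtlety, namely that the constraint $dd^c q_{t_0}(z_0)\geq 0$ in the definition of supersolutions is not inherited at the nearby approximate minima, which you resolve via the equivalent $(dd^c Q)_+^n$ formulation of Remark \ref{rem:parab=ellipt} together with the case split $dd^c q_{t_0}(z_0)>0$ versus degenerate.
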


It is a remarkable fact that we do not need any a priori estimate on the time derivatives to pass to the limit in the viscosity differential inequalities.

\begin{rem}
 An important example in applications is when $F (t,z,r) = \alpha r$ with $\alpha \geq 0$. In this case a simple change of variables reduces the general case $\alpha > 0$ to the case when $\alpha =0$.
 Indeed if $\alpha > 0$ set
 $$
 \psi (s,z) := \alpha (1 + s) \f (t,z), \, \, \text{with} \, \,  t := \alpha^{-1} \log (1 + s),
 $$
 and observe that
 $$
 \partial_s \psi (s,z) = \alpha \f (t,z) + \partial_t \f (t,z).
 $$
 Thus $\f$ is a (sub/super)solution to the the parabolic Monge-Amp\` ere equation 
 $$
 \exp \left ( \partial_t \f + \al \f \right) \mu - (dd^c \f_t)^n = 0, 
 $$ 
 if and only $\psi$ is a (sub/super)solution to 
 $$
 e^{\partial_t \p}  \tilde \mu (s,\cdot) - (dd^c \p_s)^n = 0,
 $$
 where $\tilde \mu (s,z) = \alpha^ n (s +1)^n \mu (z).$
 \end{rem}

\section{The parabolic Jensen-Ishii's maximum principle}

\subsection{Maximum principles}

 Recall that a function $u : U \subset \R^N \longrightarrow \R$ is  semi-convex in $U$ if for each small ball $B \Subset U$, there exists a constant $A > 0$ such that the function $x \longmapsto u (x) + A \vert x\vert^2$ is convex in $B$.

We also recall that the upper second order jet  ${\mathcal J}^{2,+} u (x_0)$ at 
  $x_0 \in U$ of a function $u : U \longrightarrow \R$ is the set of $(p,Q) \in \R^N \times \mathcal S_N$ s.t. for $x$ close to $x_0$,
  $$
 u (x) \leq u (x_0) + \langle p,x - x_0\rangle + \frac{1}{2} \langle Q (x-x_0),x - x_0\rangle +  o (\vert x - x_0\vert^2).
 $$
 The set $\bar{\mathcal J}^{2,+} u (x_0)$ of approximate second order superjets 
is then defined in the same way as in Definition \ref{def:closedparasuperjet}. 

The following is a consequence of the fundamental Theorem of Jensen on which the Jensen-Ishii maximum principle is based (see \cite{CIL92}, \cite{Car04}):
 
  \begin{theo} 
Let $u$ be a semi-convex function in an open set $U \subset \R^N$,  attaining a local maximum
at some point $x_0 \in U$. Then there exists $(p,Q) \in  \bar{\mathcal J}^{2,+} u (x_0)$ such that 
$p = 0$ and $Q \leq 0$. 

More precisely for any subset $E \subset U$ of Lebesgue measure $0$, there exists a sequence $(x_k)$ of points in $U\setminus E$ such that $x_k \to x_0$, $u (x_k) \to u (x_0)$, $u$ is twice differentiable at each point $x_k$  for $k > 1$, $D u (x_k) \to 0$ and $D^2 u (x_k) \to Q \leq 0$ as $k \to + \infty$. 
  \end{theo}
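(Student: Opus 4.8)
\emph{Strategy.} The statement is a version of \emph{Jensen's lemma}, and the plan is to prove it in three moves. First, perturb $u$ so that $x_0$ becomes the \emph{unique} maximum of $u$ on a small closed ball, at no cost to $\bar{\mathcal J}^{2,+}u(x_0)$. Second, establish (or rather recall and sketch) the quantitative Jensen estimate: for every small $\delta>0$, the set of points near $x_0$ that maximize some linearly perturbed function $u(\cdot)+\langle p,\cdot\rangle$, $|p|\le\delta$, has positive Lebesgue measure. Third, exploit semi-convexity: such maximizers are, off a null set, points of twice-differentiability of $u$ with $Du=-p$ and $D^2u\le 0$; picking them outside the prescribed null set $E$ and letting $\delta\downarrow 0$ yields the sequence $(x_k)$, whose limiting Hessian $Q$ provides the required $(0,Q)\in\bar{\mathcal J}^{2,+}u(x_0)$.

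\emph{Reduction.} I would first fix $\bar B=\bar B(x_0,r)\Subset U$ on which $u\le u(x_0)$ and on which $x\mapsto u(x)+A|x|^2$ is convex for some $A>0$, and replace $u$ by $u(x)-|x-x_0|^4$. This stays semi-convex (we subtracted a $C^2$ function; shrink $r$ if needed), it admits $x_0$ as its \emph{unique} maximum on $\bar B$, and --- since $|x-x_0|^4$ vanishes to second order at $x_0$ --- it has the same approximate superjet at $x_0$; twice-differentiability is unchanged and $D^2u$ is altered only by an $O(|x-x_0|^2)$ term, which will not survive the limit $x\to x_0$. After renaming, $x_0$ is the unique maximum of $u$ on $\bar B$ and $m:=\max_{\bar B\setminus B(x_0,r/2)}u<u(x_0)$.

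\emph{Jensen's measure estimate --- the hard part.} For any $p\in\R^N$ the u.s.c.\ function $u(\cdot)+\langle p,\cdot\rangle$ attains its maximum on $\bar B$; if $|p|\le\delta$ with $\delta r<u(x_0)-m$, every maximizer lies in $B(x_0,r/2)$, hence is an interior local maximum. Let $K_\delta\subset B(x_0,r/2)$ be the union over $|p|\le\delta$ of these maximizer sets. The key claim, which I expect to be the only genuine obstacle, is $|K_\delta|\ge c_0\delta^N$ for some $c_0=c_0(A,N)>0$. Writing $g:=u+A|\cdot-x_0|^2$ (convex on $B(x_0,r)$) and $c:=x_0+\tfrac{p}{2A}\in\bar B(x_0,\tfrac{\delta}{2A})$, a point $x$ maximizes $u+\langle p,\cdot\rangle$ iff the paraboloid $y\mapsto g(x)+A|y-c|^2-A|x-c|^2$ lies above $g$ on $\bar B$ and touches it at $x$; by Alexandrov's theorem $g$ is twice differentiable a.e., and at such a twice-differentiable contact point the touching conditions force $\nabla g(x)=2A(x-c)$ and $0\le D^2g(x)\le 2A\,\mathrm{Id}$. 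Hence the map $\Psi(x):=x-\tfrac1{2A}\nabla g(x)$, differentiable at every twice-differentiable point of $K_\delta$ with $D\Psi=\mathrm{Id}-\tfrac1{2A}D^2g$ satisfying $0\le\det D\Psi\le1$, sends such an $x$ to $c$; since as $c$ ranges over $\bar B(x_0,\tfrac{\delta}{2A})$ every value (and, by monotonicity of $c\mapsto\{\text{maximizers}\}$, a.e.\ value by a twice-differentiable one) is attained, the area formula applied to $\Psi$ gives $|\bar B(x_0,\tfrac{\delta}{2A})|\le\int_{K_\delta}\det D\Psi\le|K_\delta|$. This is precisely Jensen's lemma; I would cite \cite{CIL92,Car04} for the full details.

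\emph{Extraction and conclusion.} Let $D$ be the (full-measure) set of twice-differentiability of $u$ and $E$ the given null set. For each integer $k>1$, apply the above with $\delta=1/k$: since $|K_{1/k}|>0$ we have $K_{1/k}\cap D\setminus E\ne\varnothing$, so pick $x_k$ there. Then $x_k$ is an interior twice-differentiable critical point of some $u+\langle p_k,\cdot\rangle$ with $|p_k|\le1/k$, so $Du(x_k)=-p_k\to0$ and $D^2u(x_k)\le0$; semi-convexity gives $D^2u(x_k)\ge-2A\,\mathrm{Id}$, hence along a subsequence $D^2u(x_k)\to Q$ with $-2A\,\mathrm{Id}\le Q\le0$. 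Because $x_0$ is the unique maximizer of the u.s.c.\ function $u$ on $\bar B$ and $p_k\to0$, the inequality $u(x_k)+\langle p_k,x_k\rangle\ge u(x_0)+\langle p_k,x_0\rangle$ together with upper semicontinuity forces every cluster point of $(x_k)$ to equal $x_0$; thus $x_k\to x_0$, and squeezing the same inequality against $\limsup_k u(x_k)\le u(x_0)$ gives $u(x_k)\to u(x_0)$. Finally $(Du(x_k),D^2u(x_k))\in{\mathcal J}^{2,+}u(x_k)$ (the second-order Taylor expansion at a twice-differentiable point), with $Du(x_k)\to0$ and $D^2u(x_k)\to Q$, so by the definition of the approximate superjet $(0,Q)\in\bar{\mathcal J}^{2,+}u(x_0)$ with $Q\le0$; the reduction step identifies this with the superjet of the original $u$ (the $O(|x_k-x_0|^2)$ correction to $D^2u$ vanishing in the limit), and both the existence statement and its quantitative refinement follow.
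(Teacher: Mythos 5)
Your argument is correct in outline, but note that the paper itself offers no proof of this statement: it is quoted as a known consequence of Jensen's theorem and simply referred to \cite{CIL92} and \cite{Car04}. What you have written is essentially the standard proof of Jensen's lemma from those sources: the quartic perturbation $u-|x-x_0|^4$ to make the maximum strict without affecting second-order jets at $x_0$, the measure estimate $|K_\delta|\geq c_0\delta^N$ for the set of maximizers of the tilted functions $u+\langle p,\cdot\rangle$, $|p|\leq \delta$, Alexandrov's theorem to find twice-differentiable maximizers outside the prescribed null set $E$, and the passage to the limit to land in $\bar{\mathcal J}^{2,+}u(x_0)$ with $p=0$, $Q\leq 0$. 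The reduction, the extraction step, and the identification of the jets of the perturbed and original functions are all handled correctly, including the convergence $x_k\to x_0$ via uniqueness of the strict maximum. The one genuinely delicate point is your use of the area formula for the map $\Psi=\mathrm{id}-\tfrac{1}{2A}\nabla g$, which is only defined almost everywhere and is not Lipschitz a priori; the rigorous treatments (e.g.\ \cite{CIL92}, or the semiconcave analogue in \cite{CC95}) get the measure estimate either by mollifying $u$ and proving the bound first for smooth semi-convex functions, or by invoking the Monge--Amp\`ere measure of the convex function $g$, and you explicitly defer exactly this step to the same references the paper cites. So your proposal is, if anything, more detailed than the paper's treatment, and the sketch is sound modulo that standard technical point.
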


A crucial ingredient is Alexandrov's theorem on almost everywhere second order differentiability 
of convex functions \cite{Ale39}. From this we can derive the following useful result:
 
\begin{lem} \label{lem:subpp}
 Let $U \subset \R^N$ be an open set and  $H : U \times \R \times \R^N \times \mathcal S_N \longrightarrow \R \cup \{+ \infty\}$ be a function. 
 
1. Assume that $H$ is  lower semi-continuous and  degenerate elliptic. Let $w$ be a semi-convex function in the open set $U \subset \R^N$ such that for almost all $x_0 \in U$, 
$$
H (x_0, w (x_0),p,Q) \leq 0, \forall (p,Q) \in  {\mathcal J}^{2,+} w (x_0). 
$$
Then $w$ is a (viscosity) subsolution to the equation $H = 0$ in $U$. 

2. Assume that $H : U \times \R \times \R^N \times \mathcal S_N \longrightarrow \R$ is finite, upper semi-continuous and  degenerate elliptic. Let $w$ be a semi-concave function in the open set $U \subset \R^N$ such that for almost all $x_0 \in U$, 
$$
H (x_0, w (x_0),p,Q) \geq 0, \forall (p,Q) \in  {\mathcal J}^{2,-} w (x_0). 
$$
Then $w$ is a (viscosity) supersolution to the equation $H = 0$ in $U$.
\end{lem}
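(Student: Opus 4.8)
The plan is to reduce the second assertion to the first and to prove the first by the standard Jensen--Ishii device for semi-convex functions. For the reduction, given $w$ semi-concave with $H$ finite, upper semi-continuous and degenerate elliptic, I would apply part~1 to the semi-convex function $-w$ and to the Hamiltonian $\tilde H(x,r,p,Q):=-H(x,-r,-p,-Q)$, which is lower semi-continuous (it is the composition of the lower semi-continuous function $-H$ with an affine map) and degenerate elliptic. Since $\mathcal J^{2,+}(-w)(x)=-\mathcal J^{2,-}w(x)$ and $(-w)(x)=-w(x)$, the almost everywhere hypothesis $H(x,w(x),p,Q)\ge 0$ for all $(p,Q)\in\mathcal J^{2,-}w(x)$ turns into exactly the almost everywhere hypothesis of part~1 for the pair $(-w,\tilde H)$; and by definition ``$-w$ is a viscosity subsolution of $\tilde H=0$'' means ``$w$ is a viscosity supersolution of $H=0$''. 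So everything rests on part~1.

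For part~1 I would fix $x_0\in U$ and a jet $(p,Q)\in\mathcal J^{2,+}w(x_0)$ and aim to prove $H(x_0,w(x_0),p,Q)\le 0$, which is the jet form of the requirement that $w$ be a subsolution. First I would turn $x_0$ into a strict local maximum of a perturbed function: for $\delta>0$ set
\[
\phi_\delta(x):=w(x)-\langle p,x-x_0\rangle-\tfrac12\langle(Q+\delta I)(x-x_0),x-x_0\rangle,
\]
so that the jet inequality gives $\phi_\delta(x)\le\phi_\delta(x_0)-\tfrac\delta2|x-x_0|^2+o(|x-x_0|^2)$; hence $\phi_\delta$ has a strict local maximum at $x_0$, and it is still semi-convex near $x_0$, being the sum of a semi-convex function and a quadratic polynomial.

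Next I would apply the quantitative form of Jensen's theorem recalled above to $\phi_\delta$ at $x_0$, with exceptional set $E=E_1\cup E_2$, where $E_1$ is the Lebesgue-null set off which $w$ (hence $\phi_\delta$) is twice differentiable by Alexandrov's theorem and $E_2$ is a null set off which $H(x,w(x),p',Q')\le 0$ holds for every $(p',Q')\in\mathcal J^{2,+}w(x)$. This produces a sequence $x_k\to x_0$ in $U\setminus E$ with $\phi_\delta(x_k)\to\phi_\delta(x_0)$, $D\phi_\delta(x_k)\to 0$ and $D^2\phi_\delta(x_k)\to Q'_\delta$ for some $Q'_\delta\le 0$. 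Consequently $w(x_k)\to w(x_0)$, $Dw(x_k)=D\phi_\delta(x_k)+p+(Q+\delta I)(x_k-x_0)\to p$, and $D^2w(x_k)=D^2\phi_\delta(x_k)+Q+\delta I\to \tilde Q_\delta:=Q'_\delta+Q+\delta I$. Since $x_k$ is a point of twice differentiability, $(Dw(x_k),D^2w(x_k))\in\mathcal J^{2,+}w(x_k)$, and $x_k\notin E_2$ gives $H(x_k,w(x_k),Dw(x_k),D^2w(x_k))\le 0$; letting $k\to\infty$ and using lower semi-continuity of $H$ yields $H(x_0,w(x_0),p,\tilde Q_\delta)\le 0$. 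Because $Q'_\delta\le 0$ we have $\tilde Q_\delta\le Q+\delta I$, so degenerate ellipticity gives $H(x_0,w(x_0),p,Q+\delta I)\le 0$, and letting $\delta\to 0$ with lower semi-continuity once more gives $H(x_0,w(x_0),p,Q)\le 0$, as desired.

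I do not expect a deep obstacle, since the hard analytic input --- Alexandrov's almost-everywhere second differentiability plus the quantitative Jensen lemma --- is precisely the theorem quoted just before the statement. The parts that need care are bookkeeping: packaging the almost everywhere hypothesis into a single null set $E_2$ to feed into the ``more precise'' version of Jensen's theorem, and checking that at the approximating points $x_k$ the pair $(Dw(x_k),D^2w(x_k))$ genuinely lies in $\mathcal J^{2,+}w(x_k)$ so that the hypothesis applies there. One point worth flagging is that degenerate ellipticity is used in exactly one place --- to replace the limiting Hessian $\tilde Q_\delta\le Q+\delta I$ by $Q+\delta I$ --- and that the asymmetry between the two assertions (finite and upper semi-continuous in part~2, general and lower semi-continuous in part~1) is exactly what legitimizes the reduction of part~2 to part~1 via $H\mapsto\tilde H$.
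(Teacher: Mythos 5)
Your proof is correct and follows essentially the same route as the paper: apply the quantitative Jensen maximum principle (with the null exceptional set where the a.e.\ inequality may fail, plus Alexandrov twice-differentiability) to a semi-convex function with a local maximum, pass to the limit using lower semi-continuity of $H$, and finish with degenerate ellipticity. The only cosmetic differences are that you work directly with superjets, adding the $\tfrac{\delta}{2}|x-x_0|^2$ perturbation and letting $\delta\to 0$, where the paper subtracts a $C^2$ upper test function, and that you make the duality $H\mapsto -H(x,-r,-p,-Q)$ explicit for part~2, which the paper dispatches with ``the same argument''.
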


In other words, if $w$ is a subsolution (resp. supersolution) of the equation $H = 0$ almost everywhere in $U$ , then it is a subsolution (resp. supersolution) everywhere.

\begin{proof} We prove the first statement concerning subsolutions. The second statement concerning supersolutions can be proved in the same way.

 We will use the maximum principle of Jensen for semi-convex functions.
Let $q$ be a $C^2$ upper test function for $w$ at a fixed point $x_0$.
Thus $u := w - q$ is semi-convex function in $U$ which takes a local maximum at $x_0$. 

Let us denote by $E$ the exceptional set of points where the viscosity inequality in the lemma is not satisfied. Since $E$ has Lebesgue measure $0$, it follows from the local maximum principle of Jensen for the semi-convex function $u$ that there exists a sequence $x_k$ in $U \setminus E$ converging to $x_0$ such that $u$ is twice differentiable at $x_k$, $D u (x_k) \to 0$ and $D^2  u (x_k) \to A \leq 0$ as $k \to + \infty$ i.e. $(0,A) \in \bar{\mathcal J}^{2,+} u (x_0)$ and $A \leq 0$. 

 By definition $ D u (x_k) = D w (x_k) - D q (x_k)$ and $D^2 u (x_k) = D^2 w (x_k) - D^2 q (x_k)$ for any $k$ hence $p_k := D w (x_k) = D u (x_k) + D q (x_k) \to D q (x_0)$ and $Q_k := D^2 w (x_k) \to A + D^2 q (x_0) =: Q$.
Therefore  $Q \leq D^2 q (x_0)$ since $A \leq 0$.

By the choice of $x_k$, we infer $H (x_k, p_k,Q_k) \leq 0$. By the lower semi-continuity of $H$ we get at the limit
$ H (x_0, D q (x_0), Q) \leq 0$. Since $Q \leq D^2 q (x_0)$, by the degenerate ellipticity condition, we conclude that 
$$
H (x_0, D q (x_0), D^2 q (x_0)) \leq 0.
$$
Thus $w$ satisfies the viscosity differential inequality at each point of $U$.
\end{proof}

We now state the parabolic Jensen-Ishii's maximum principle (\cite{CIL92}, \cite[p. 65]{DI04}):

\begin{theo} \label{Jen-Ish}
 Let $\Omega \subset \R^{N}$ be a domain, $u$  an upper semi-continuous function and $v$ a lower semi-continuous function in $]0, T[ \times \Omega$.
Let $\phi$ be a function defined in $]0, T[ \times \Omega^2$ such that $(t,x,y) \longmapsto \phi (t,x,y)$ is continuously differentiable in $t$ and twice continuously differentiable in $(x,y)$.  

Assume that the function $(t,x,y) \longmapsto u (t,x) - v(t,y) - \phi (t,x,y)$ has a local maximum at some point $(\hat t, \hat x, \hat y) \in ]0, T[ \times \Omega^2$. 

Assume furthermore that both $w=u$ and $w=-v$ satisfy:
\begin{displaymath}
(\label{Cond}\ref{Cond})\left\{
\begin{array}{ll}
\forall (s,z) \in \Omega & \exists r>0 \ \text{such that} \ \forall M >0 \  \exists C \ \text{satisfying} \\
&\left.
\begin{array}{l}
|(t,x)-(s,z)| \le r,\\
 (\tau,p,Q)\in \mathcal{P}^{2,+}w(t,x) \\
|w(t,x)|+|p| + |Q| \le M
\end{array} \right\} \Longrightarrow \tau\le C.
\end{array}\right.
\end{displaymath}

Then for any $\kappa > 0$, there exists 
$(\tau_1,p_1,Q^+) \in \bar{\mathcal P}^{2,+} u (\hat t, \hat x)$, 
$(\tau_2,p_2,Q^-) \in \bar{\mathcal P}^{2,-} v (\hat t, \hat y)$ such that
$$\tau_1 = \tau_2 + D_t \phi (\hat t, \hat x,\hat y), \ p_1 = D_x \phi (\hat t, \hat x,\hat y), \ p_2 = - D_y \phi (\hat t, \hat x,\hat y)$$ and
$$
-\left(\frac{1}{\kappa} + \| A \| \right) I \leq 
\left(
\begin{array}{cc}
 Q^+ &0 \\
0 & - Q^-
\end{array}
\right) \leq  A  + \kappa A^2,
$$
in the sense of quadratic forms on $\R^N$, where $A := D_{x,y}^2 \phi (\hat t, \hat x,\hat y)$. 
\end{theo}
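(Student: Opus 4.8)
\emph{Approach.} This is the parabolic version of the classical ``theorem on sums'', and I would establish it along the lines of \cite[\S 8]{CIL92} (see also \cite[p.~65]{DI04}), reducing it to the maximum principle for semi-convex functions stated above (itself a consequence of Alexandrov's and Jensen's theorems). The only feature special to the parabolic setting is that the time variable must be treated as a passive parameter carrying only a scalar first-order jet $\tau$, and condition (\ref{Cond}) is precisely what legitimizes this reduction.

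\emph{Regularization and localization.} First I would replace $u$ by its sup-convolution in the space variable only,
$$
u^{\e}(t,x):=\sup_{x'\in\Omega}\Big(u(t,x')-\tfrac{1}{2\e}|x-x'|^{2}\Big),
$$
and $v$ by its inf-convolution $v_{\e}$. Then $u^{\e}$ is semi-convex in $x$ locally uniformly in $t$, $u^{\e}\downarrow u$, $v_{\e}\uparrow v$ as $\e\to 0$, and any $(\tau,p,Q)\in\mathcal P^{2,+}u^{\e}(t,x)$ arises, after moving $x$ by at most $\e|p|$ and enlarging $Q$ in a controlled way (allowed once $\e\|Q\|<1$), from a superjet of $u$ at a nearby point with the \emph{same} $\tau$; symmetrically for $v$. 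It is here that (\ref{Cond}) is used: it guarantees that the supremum defining $u^{\e}(t,x)$ is attained near $x$ and that all the $\tau$-components involved stay locally bounded, so that nothing is lost when $\e\to 0$. Since the maximum of $u-v-\phi$ at $(\hat t,\hat x,\hat y)$ persists in the limit, the perturbed function $(t,x,y)\mapsto u^{\e}(t,x)-v_{\e}(t,y)-\phi(t,x,y)$ attains a local maximum at a point $(\hat t_{\e},\hat x_{\e},\hat y_{\e})\to(\hat t,\hat x,\hat y)$, and $w(t,x,y):=u^{\e}(t,x)-v_{\e}(t,y)$ is semi-convex near that point.

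\emph{Jensen's principle, matrix algebra, and passage to the limit.} Fixing $\kappa>0$ and setting $A:=D^{2}_{x,y}\phi(\hat t_{\e},\hat x_{\e},\hat y_{\e})$, the semi-convex function obtained from $w-\phi$ by subtracting a quadratic penalization built out of $A^{2}$ still has a local maximum at $(\hat t_{\e},\hat x_{\e},\hat y_{\e})$. Applying the maximum principle for semi-convex functions produces points of twice-differentiability of $w$ converging to that point, at which $D_{x,y}w\to D_{x,y}\phi$ and the Hessian $D^{2}_{x,y}w$ converges to a matrix $X$ with $-(\tfrac1\kappa+\|A\|)I\le X\le A+\kappa A^{2}$ — the right-hand bound coming from the local maximum of the penalized function and the left-hand one from the semi-convexity constant after the penalization is chosen as in \cite[\S 3]{CIL92}. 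Because $w(t,x,y)=u^{\e}(t,x)-v_{\e}(t,y)$ splits, its $(x,y)$-Hessian is block diagonal, which yields matrices $Q^{+}_{\e},-Q^{-}_{\e}\in\mathcal S_{N}$ fitting the two-sided inequality in the statement; first-order optimality in $x,y$ gives $p_{1}=D_x\phi$, $p_{2}=-D_y\phi$, and first-order optimality in $t$ at the limiting point gives the scalar identity $\tau_{1}=\tau_{2}+D_t\phi$. By the first step these data come from genuine superjets of $u$ and subjets of $v$ near $(\hat t,\hat x)$ and $(\hat t,\hat y)$; condition (\ref{Cond}), applied both to $u$ and to $-v$ together with the already established bounds on $|p_i|$ and $|Q^{\pm}_{\e}|$, keeps $\tau_{1},\tau_{2}$ bounded, so a subsequence converges as $\e\to 0$ and the limiting triples land in $\bar{\mathcal P}^{2,+}u(\hat t,\hat x)$ and $\bar{\mathcal P}^{2,-}v(\hat t,\hat y)$ while the matrix inequality is preserved.

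\emph{Main obstacle.} Compared with the elliptic theorem on sums, the only genuinely delicate point is the handling of the time variable: since the Hamilton function $H$ is merely lower semi-continuous and involves only $\partial_t$, one cannot sup-convolve in $t$, so $t$ must be carried as a parameter throughout and the whole argument hinges on condition (\ref{Cond}) to control, and then pass to the limit in, the $\tau$-components. Beyond that, matching the precise correction $A+\kappa A^{2}$ and the lower bound $-(\tfrac1\kappa+\|A\|)I$ is the same linear-algebra computation as in \cite[\S 3]{CIL92}, which I would simply quote.
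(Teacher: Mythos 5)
The paper does not actually prove Theorem \ref{Jen-Ish}: it is quoted as a known result, with the proof delegated to \cite{CIL92} (Theorem 8.3 and its appendix) and \cite[p. 65]{DI04}. So the relevant comparison is between your sketch and the standard argument in those references, and there your sketch has a genuine gap exactly at the point that makes the parabolic statement harder than the elliptic one: the production of the time components $\tau_1,\tau_2$. You sup-convolve in the space variable only and then apply Jensen's maximum principle to $(x,y)\mapsto u^{\e}(t,x)-v_{\e}(t,y)-\phi(t,x,y)$ with $t$ frozen at the maximum point. This only yields elements of the \emph{elliptic} second-order jets of the time-slices $u^{\e}(\hat t_\e,\cdot)$, $v_\e(\hat t_\e,\cdot)$; it gives no control whatsoever in the $t$-direction, whereas membership in $\mathcal P^{2,+}u$, $\mathcal P^{2,-}v$ requires a one-sided expansion in $s-t$ as well. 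Your appeal to ``first-order optimality in $t$ at the limiting point'' does not repair this: $u^\e$ and $v_\e$ are merely semicontinuous in $t$ (you deliberately do not regularize in time), so the maximum over $t$ produces no derivative, no $\tau$'s, and in particular no identity $\tau_1=\tau_2+D_t\phi$. In the actual proofs this is precisely where the work lies: \cite{CIL92} reduces to the elliptic theorem on sums by incorporating the time variable into the argument (doubling/penalizing the time variable, resp. regularizing in $t$ so as to become Lipschitz in time, as in \cite{DI04} and \cite{IS12}), and it is only then that condition (\ref{Cond}) enters, to bound the resulting time-derivative components so that they converge, along a subsequence, to admissible $\tau$'s in $\bar{\mathcal P}^{2,+}u(\hat t,\hat x)$ and $\bar{\mathcal P}^{2,-}v(\hat t,\hat y)$.

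Two smaller inaccuracies in the same direction: condition (\ref{Cond}) does not ``guarantee that the supremum defining $u^{\e}(t,x)$ is attained near $x$'' --- that comes from boundedness/penalization and has nothing to do with (\ref{Cond}), whose sole role is the a priori bound on $\tau$ in terms of the other jet components; and the claim that ``one cannot sup-convolve in $t$'' confuses the PDE step (where time sup-convolution of a supersolution is indeed delicate when the Hamiltonian is only lower semicontinuous) with the purely jet-theoretic Theorem \ref{Jen-Ish}, for which regularization in time is both legitimate and, in the standard proofs, essential. The matrix inequalities and the block structure you describe are fine and are indeed the same linear algebra as in the elliptic case; the missing idea is the mechanism that carries the time variable through the Jensen argument. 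Since the paper itself simply cites \cite{CIL92} and \cite{DI04}, the safe course is to do the same, or to reproduce their proof in full rather than the space-only convolution scheme sketched here.
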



\begin{rem}
Condition (\ref{Cond}) is automatically satisfied for $w$ locally  Lipschitz in the time variable
or if $w$ is a subsolution of (1.1) with $\mu>0$. It need not be satisfied for a general supersolution
of (1.1) even if $\mu>0$. 
\end{rem}

\subsection{Regularizing in time}

Given  a bounded upper semi-continuous function $u : [0,T[ \times \Omega \longrightarrow \R$,
we consider the upper approximating sequence by Lipschitz functions in $t$,  
$$
u^k (t,x) := \sup \{u (s,x) - k \vert s - t\vert , s \in [0,T[\}, \, \, (t,x) \in [0,T[ \times \Omega.
$$

If $v$ is a bounded lower semi-continuous function, we consider the lower approximating sequence of Lipschitz functions in $t$,
$$
v_k (t,x) := \inf \{v (s,x) + k \vert s - t\vert , s \in [0,T[\}, \, \, (t,x) \in [0,T[ \times \Omega.
$$

\begin{lem} \label{reg} 
For $k \in \R^+$, $u^k$ is an upper semi-continuous function which satisfies the following properties:
 \begin{itemize}
 \item $u (t,z) \leq u^k (t,z) \leq \sup_{\vert s - t\vert \leq A\slash k} u (s,z)$, where $A > 2 osc_{X_T} u$. 
 \item $\vert u^k (t,x) - u^k (s,x) \vert \leq k \vert s - t\vert$, 
for $(s,z) \in [0,T[ \times \Omega, (t,z) \in [0,T[ \times \Omega$. 
 \item For all $(t_0,z_0) \in [0,T-A/k] \times \Omega$, there exists $t_0^* \in [0,T[$ such that 
$$
\vert t_0^* - t_0\vert \leq A\slash k \text{ and } u^k (t_0,z_0) = u (t_0^*,z_0) - k \vert t_0 - t_0^*\vert.
$$
 \end{itemize}

 Moreover if $u$ satisfies  
 \begin{equation} \label{eq:subdiffineq}
 e^{\partial_t{u}  + F (t,u_t,\cdot)} \mu (t,\cdot) \leq (dd^ c u_t)^ n
\text{ in }  ]0,T[ \times \Omega,
 \end{equation} 
  where $\mu = \mu (\cdot,\cdot) \geq 0$ is a continuous Borel measure in $\Omega_T$,
 then  the function  $u^k$ is a subsolution of 
 $$
 e^{\partial_t{w}   + F_k (t,u_t,\cdot)} \mu_k (t,\cdot)  -  (dd^ c w_t)^ n = 0
 \text{ in } ]A/k,T-A/k[ \times \Omega,
 $$
 where $F_k (t,x,z) :=   \inf_{\vert s -t\vert \leq A \slash k} F (s,x,z) + k \vert s - t\vert $ and $\mu_k (t,z) := \inf_{\vert s -t\vert \leq A \slash k} \mu (s,z)$.
  The dual statement is true for a lower semi-continuous function $v$ which is a supersolution.
 \end{lem}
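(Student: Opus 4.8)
The plan is to dispatch the first three bullet points as the standard elementary properties of the sup-convolution in time, and then to prove the differential inequality for $u^k$ by transplanting an upper test function for $u^k$ into one for $u$. For the upper bound, note that if $k|s-t|\ge A$ then, since $A>2\,\mathrm{osc}\, u$, we have $u(s,z)-k|s-t|\le \sup u - A < \inf u \le u(t,z)\le u^k(t,z)$; hence the supremum defining $u^k(t,z)$ is effectively taken over $\{s:|s-t|<A/k\}$. This simultaneously gives the upper bound, the existence --- for $t_0\le T-A/k$, the relevant time interval then being compact in $[0,T[$ and $u$ being bounded and upper semi-continuous --- of a maximizer $t_0^*$ with $|t_0^*-t_0|\le A/k$, and the fact that $t_0^*\in\,]0,T[$ once moreover $t_0>A/k$. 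The Lipschitz estimate in $t$ follows from $u(s,z)-k|s-t|\le u^k(t',z)+k|t-t'|$ by taking the supremum over $s$, and upper semi-continuity of $u^k$ follows by combining the Lipschitz continuity in $t$ with the observation that locally $u^k$ is the supremum, over a compact set of parameters $s$, of the jointly upper semi-continuous functions $(s,z)\mapsto u(s,z)-k|s-t|$.

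For the main assertion, fix $(t_0,z_0)\in\,]A/k,T-A/k[\,\times\Om$ and an upper test function $q$ for $u^k$ at $(t_0,z_0)$, defined on a neighbourhood $U$ of $(t_0,z_0)$; let $t_0^*\in\,]0,T[$ be a maximizer as above, so $u(t_0^*,z_0)=u^k(t_0,z_0)+k|t_0^*-t_0|$ with $|t_0^*-t_0|\le A/k$. The crux is to manufacture from $q$ an upper test function for $u$ at $(t_0^*,z_0)$, namely
\[
\tilde q(s,z):=q\bigl(t_0+(s-t_0^*),\,z\bigr)+k\,|t_0^*-t_0|,
\]
which is $C^{(1,2)}$ near $(t_0^*,z_0)$. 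Along the path $t=t_0+(s-t_0^*)$ one has $k|s-t|\equiv k|t_0^*-t_0|$, so $u^k\le q$ on $U$ yields $u(s,z)\le\tilde q(s,z)$ near $(t_0^*,z_0)$, while $\tilde q(t_0^*,z_0)=q(t_0,z_0)+k|t_0^*-t_0|=u(t_0^*,z_0)$. Since $u$ is a subsolution, Remark~\ref{rem:parab=ellipt} ensures $dd^c\tilde q_{t_0^*}(z_0)\ge 0$, and the subsolution inequality for $u$ at $(t_0^*,z_0)$ reads
\[
e^{\partial_t\tilde q(t_0^*,z_0)+F(t_0^*,z_0,\tilde q(t_0^*,z_0))}\,\mu(t_0^*,z_0)\le\bigl(dd^c\tilde q_{t_0^*}(z_0)\bigr)^n .
\]
Using that $\partial_t\tilde q(t_0^*,z_0)=\partial_t q(t_0,z_0)$, $dd^c\tilde q_{t_0^*}(z_0)=dd^c q_{t_0}(z_0)$, $\tilde q(t_0^*,z_0)=u^k(t_0,z_0)+k|t_0^*-t_0|$, that $|t_0^*-t_0|\le A/k$, that $F$ is non decreasing in its last slot, and the definitions of $F_k$ and $\mu_k$, one checks $F_k\bigl(t_0,u^k(t_0,z_0),z_0\bigr)\le F\bigl(t_0^*,z_0,u^k(t_0,z_0)+k|t_0^*-t_0|\bigr)$ and $\mu_k(t_0,z_0)\le\mu(t_0^*,z_0)$, whence
\[
e^{\partial_t q(t_0,z_0)+F_k(t_0,u^k(t_0,z_0),z_0)}\,\mu_k(t_0,z_0)\le\bigl(dd^c q_{t_0}(z_0)\bigr)^n ,
\]
which, together with $dd^c q_{t_0}(z_0)\ge 0$, is exactly the subsolution inequality for $u^k$ at $(t_0,z_0)$ relative to $(F_k,\mu_k)$; since $(t_0,z_0)$ was arbitrary, $u^k$ is a subsolution on $]A/k,T-A/k[\,\times\Om$.

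The dual statement for a lower semi-continuous supersolution $v$ and its inf-convolution $v_k(t,z)=\inf_s\{v(s,z)+k|s-t|\}$ is proved symmetrically, replacing $t_0^*$ by a minimizer and using the lower test function $\tilde q(s,z):=q(t_0+(s-t_0^*),z)-k|t_0^*-t_0|$, which has $dd^c\tilde q_{t_0^*}(z_0)=dd^c q_{t_0}(z_0)\ge 0$, and then invoking the supersolution property of $v$ with the reversed monotonicity comparisons. I expect the only genuinely delicate step to be the construction of $\tilde q$: it works precisely because the penalty $k|s-t|$ is positively homogeneous, so sliding $q$ rigidly in time by $t_0^*-t_0$ turns the otherwise non-smooth penalty into the harmless constant $k|t_0^*-t_0|$ along the slid trajectory; the rest is routine bookkeeping with the monotonicity of $F$ and with the elementary time-regularizations $F_k$ and $\mu_k$.
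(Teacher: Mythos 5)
Your argument is correct and follows essentially the same route as the paper: shift the upper test function $q$ rigidly in time by $t_0-t_0^*$ and add the constant $k|t_0-t_0^*|$ to obtain an upper test function for $u$ at $(t_0^*,z_0)$, apply the subsolution inequality there, note that $\partial_t$ and $dd^c$ of the test function are unchanged, and conclude via the monotonicity of $F$ and the comparisons $\mu(t_0^*,z_0)\ge\mu_k(t_0,z_0)$, $F(t_0^*,\cdot)\ge F_k(t_0,\cdot)$ (the paper dismisses the first three bullets as elementary, which you spell out, and likewise treats the supersolution case as the same proof with obvious modifications).
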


\begin{proof} 
The first statement is elementary. Let us prove the second one in the same spirit as \cite{CC95}. Let $(t_0,z_0) \in ]0,T[ \times \Omega$ be fixed  and let $q (t,z)$ be an upper test function that touches $u^k$ from above at  $(t_0,z_0)$.  Consider for $k$ large enough, the following smooth function given by
$$
q^* (t,z) := q (t + t_0 - t_0^*, z) + k \vert t_0 - t_0^*\vert.
$$
Then $q^*$ is an upper test function for $u$ at the point $(t_0^*,z_0)$. Since $u$ satisfies the differential inequality (\ref{eq:subdiffineq}), we have
$$
 e^{\partial_t q^* (t_0^*,z_0) + F (t_0^*, q^* (t_0,z_0), z_0)} \mu (t_0^*,z_0) \leq (dd^c q^*_{t_0^*} (z_0))^n.
$$
Since $\partial_t q^* (t_0^*,z_0) = \partial_t q (t_0,z_0)$, 
$q^* (t_0^*,z_0) = q (t_0,z_0)+k|t-t^*_0|$ and $dd^c q^*_{t_0^*} (z_0) = dd^c q_{t_0} (x_0)$ and  $F$ is non decreasing, we deduce the following inequality
$$
e^{\partial_t q (t_0,z_0) +  F (t_0^*, q (t_0,z_0), z_0)} \mu (t_0^*,z_0) \leq (dd^c q_{t_0} (z_0))^n,
$$
which proves the statement of the lemma since  $\mu (t_0^*,z_0) \geq \mu_k (t_0,z_0)$ and $ F (t_0^*, q (t_0,z_0), z_0) \geq  F_k (t_0, q (t_0,z_0), z_0)$.

For a supersolution the same proof works modulo obvious modifications. 
\end{proof}

\subsection{Spatial plurisubharmonicity of parabolic subsolutions}

We first connect sub/super-solutions of certain degenerate elliptic complex Monge-Amp\`ere equations
to sub/super-solutions properties of their slices.

\begin{prop} \label{lem:PartialSol}
 Let $ G:]0,T[ \times \R \times \Omega  \longrightarrow \R$ be a continuous function,  
$\nu (t,z) = \nu_t (z)$ a continuous family of volume forms   and let $w : ]0,T[ \times \Omega \longrightarrow \R$  be a subsolution (resp. supersolution) to the degenerate elliptic complex Monge-Amp\`ere equation 
 $$
 e^{ G (t,z,w)} \nu(t,z) - (dd^c w)^n = 0,
 $$ 
 in the viscosity sense in $]0,T[ \times \Omega$.  
Then for all $t_0 \in[0,T[$, the function 
$$
w_{t_0} : z \longmapsto w (t_0,z)
$$ 
is a subsolution (resp. supersolution) to the degenerate elliptic  equation  
$e^{ G (t_0,z,\p)} \nu_{t_0} - (dd^c w_{t_0})^n = 0$ in $\Omega$. 
\end{prop}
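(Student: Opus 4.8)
The plan is to exploit the fact that the equation $e^{G(t,z,w)}\nu(t,z)-(dd^c w)^n=0$ involves \emph{no} derivative in the time variable, so that a purely spatial test function can be turned into a space--time test function by adding a quadratic penalty in $t$. To prove that $w_{t_0}$ is a subsolution in $\Omega$, fix $t_0\in\,]0,T[$ (the endpoint $t_0=0$ being analogous, using a one-sided penalization, provided $w$ extends semicontinuously to $\{0\}\times\Omega$), fix $z_0\in\Omega$, and let $q$ be a $C^2$ upper test function for $w_{t_0}$ at $z_0$. Replacing $q$ by $q+\delta|z-z_0|^2$ — still an admissible upper test function — I may assume $z_0$ is a \emph{strict} local maximum of $w_{t_0}-q$, the parameter $\delta>0$ being sent to $0$ only at the very end. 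I work on a small closed ball $K=\overline B\bigl((t_0,z_0),r\bigr)\Subset\,]0,T[\times\Omega$ on which $w(t_0,\cdot)\le q$ and the maximum of $w_{t_0}-q$ at $z_0$ is strict.

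For $\varepsilon>0$ I consider the upper semicontinuous function $\Phi_\varepsilon(t,z):=w(t,z)-q(z)-\varepsilon^{-1}(t-t_0)^2$ on $K$ and pick a maximum point $(t_\varepsilon,z_\varepsilon)\in K$. From $\Phi_\varepsilon(t_\varepsilon,z_\varepsilon)\ge\Phi_\varepsilon(t_0,z_0)=0$ and the fact that $w$, being u.s.c., is bounded above on $K$ while $q$ is continuous, one gets $(t_\varepsilon-t_0)^2=O(\varepsilon)$, hence $t_\varepsilon\to t_0$; then upper semicontinuity of $w$ together with the strictness of the maximum force $z_\varepsilon\to z_0$ and, simultaneously, $w(t_\varepsilon,z_\varepsilon)\to w(t_0,z_0)$. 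In particular $(t_\varepsilon,z_\varepsilon)$ is interior to $K$ for $\varepsilon$ small, and $Q_\varepsilon(t,z):=q(z)+\varepsilon^{-1}(t-t_0)^2+c_\varepsilon$ (with $c_\varepsilon$ chosen so that it touches) is a $C^{(1,2)}$ upper test function for $w$, regarded as a subsolution of the degenerate \emph{elliptic} equation in the $2n+1$ real variables $(t,z)$, at $(t_\varepsilon,z_\varepsilon)$. Since $dd^c_z$ annihilates the time penalty, $dd^c (Q_\varepsilon)_{t_\varepsilon}(z_\varepsilon)=dd^c q(z_\varepsilon)$, and the subsolution inequality reads $(dd^c q(z_\varepsilon))^n\ge e^{G(t_\varepsilon,z_\varepsilon,w(t_\varepsilon,z_\varepsilon))}\nu_{t_\varepsilon}(z_\varepsilon)$, the viscosity convention also forcing $dd^c q(z_\varepsilon)\ge 0$. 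Letting $\varepsilon\to0$ and using continuity of $dd^c q$, of $G$ and of the family $\nu$ yields $(dd^c q(z_0))^n\ge e^{G(t_0,z_0,q(z_0))}\nu_{t_0}(z_0)$; letting $\delta\to0$ recovers the original test function (with $q(z_0)$ unchanged) and finishes the subsolution half.

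For supersolutions the argument is dual: replace $q$ by $q-\delta|z-z_0|^2$ to make $z_0$ a strict local minimum of $w_{t_0}-q$, minimize $\Psi_\varepsilon(t,z)=w(t,z)-q(z)+\varepsilon^{-1}(t-t_0)^2$ over $K$, and run the same convergence analysis, now with $w$ lower semicontinuous and bounded below on $K$. The only extra point of care is that sub- and supersolutions are not symmetric here, so it is cleanest to use the finite Hamiltonian $H_+$ of Remark \ref{rem:parab=ellipt}: the supersolution inequality $e^{G(\cdot)}\nu(\cdot)\ge\bigl((dd^c(\cdot))_+\bigr)^n$ holds with no sign restriction on the test function's complex Hessian, which is what we need because the spatial Hessian $dd^c q(z_\varepsilon)$ of the lower test function produced by this construction need not be $\ge 0$. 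Passing to the limits $\varepsilon\to0$ then $\delta\to0$ gives $e^{G(t_0,z_0,q(z_0))}\nu_{t_0}(z_0)\ge\bigl((dd^c q(z_0))_+\bigr)^n$, which is exactly the supersolution condition for $w_{t_0}$ in $\Omega$.

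The genuinely delicate step is the semicontinuity bookkeeping for the penalized maximum (resp. minimum): one must ensure not merely that $(t_\varepsilon,z_\varepsilon)\to(t_0,z_0)$ but also that $w(t_\varepsilon,z_\varepsilon)\to w(t_0,z_0)$, so that in the limit the differential inequality is read at the correct point and with the correct value of $G$. Everything else is forced by the absence of a $\partial_t$-term in the Hamiltonian; the only secondary nuisance, the asymmetry between subsolutions and supersolutions, is absorbed by passing to $H_+$.
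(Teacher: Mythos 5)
Your proof is correct, but it follows a genuinely different route from the paper. The paper regularizes $w$ by sup/inf-convolution in all variables, invokes Alexandrov's theorem to get pointwise second-order differentiability a.e. of the semi-convex (resp. semi-concave) approximants, upgrades the resulting almost-everywhere inequalities to viscosity inequalities via its Lemma \ref{lem:subpp}, and then uses the stability lemma twice (once in the regularization parameter, once to pass from almost every $t_0$ to an arbitrary $t_0$ by approximating in time). You instead argue directly at the level of test functions with the classical penalization $\varepsilon^{-1}(t-t_0)^2$ in the time variable, exploiting that the Hamiltonian contains no $\partial_t$ term, so the spatial Hessian of the joint test function is unchanged; the semicontinuity bookkeeping you emphasize (convergence of $(t_\varepsilon,z_\varepsilon)$ to $(t_0,z_0)$ \emph{and} of the values $w(t_\varepsilon,z_\varepsilon)$ to $w(t_0,z_0)$, forced by the strict max/min obtained from the $\delta|z-z_0|^2$ correction) is indeed the crux, and your treatment of it is sound. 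Your route buys several things: it is more elementary (no Alexandrov, no a.e.-to-everywhere lemma, no measure-theoretic exceptional sets), it yields the conclusion for every $t_0\in\,]0,T[$ at once rather than for a.e. $t_0$ plus a limiting argument in $t_0$, and it requires no boundedness of $w$ (only local bounds, automatic from semicontinuity on compacts), whereas the inf-convolution step in the paper implicitly uses an oscillation bound. Your handling of the sub/supersolution asymmetry via the finite Hamiltonian $H_+$ matches the paper's own conventions (Remark \ref{rem:parab=ellipt}), and the final limit in $\delta$ is harmless since $A\mapsto\bigl((A)_+\bigr)^n$ passes to the limit in the required inequality. Two small remarks: your caveat about $t_0=0$ is appropriate, since $w$ is only defined on $]0,T[\times\Omega$ and the paper itself only treats interior times in its proof despite the statement saying $[0,T[$; and in the subsolution half you could, if you wish, avoid invoking the convention $dd^c q(z_\varepsilon)\ge 0$ altogether, since the paper's definition of subsolution for the slice only requires the Monge-Amp\`ere inequality, which your limit already provides.
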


Let us stress that these equations do not contain any time derivative $\partial_t w$!

\begin{proof} 
We give the proof for the supersolution case and let the reader deal with the (slightly simpler)
case of subsolutions.

Aassume that $w$ satisfies the differential inequality 
$$
 (dd^c w)^n  \leq e^{ G (t,z,w)} \nu (t,z)
$$ 
in the sense of viscosity in $ U := ]0,T[ \times \Omega$. 
We approximate $w$ by inf-convolution $w_\e (t,z)$ in all variables, the function
$w_\e$ is defined in
the open set $U_\e := ]A \e, T - A \e[ \times \Omega_\e \subset U$ for $\e> 0$ small, where 
$$
\Omega_{\e}= \{z\in \Omega \ | \ d(z,\partial \Omega)> A \e\},
\text{ with }
A := 4 \ \text{osc}_X(u). 
$$

It is a classical fact (see \cite{CC95}) that the function $v := w_\e$ is a supersolution of an approximate parabolic Monge-Amp\` ere equation:
it satisfies the  differential inequality
$$ 
 (dd^c v_\e)^n \leq e^{ G^\e (t,z,v_\e)} \nu^\e,
$$
in the sense of viscosity, where 
$$
\nu_\e (t,z) := \sup \{ \nu (t',z') ;  \vert t' - t\vert , \vert z' - z\vert \leq A \e\}
$$ 
and $G^\e$ is defined similarly.

 Since $w_\e$ is semi-concave in $U_\e$, it follows from Alexandrov's theorem that it is  twice differentiable almost everywhere in $U_\e$. The above inequality is therefore satisfied pointwise
almost everywhere, i.e. at each point $(t,z)$ where $w_\e$ has a second order jet.  Observe that for almost all $t_0 \in ]A \e,T- A\e[$, there exist a set $E^{t_0} \subset \Omega_\e$ of Lebesgue measure $0$ such that for any $z_0 \notin E^{t_0}$, the function  $w_\e $ is twice differentiable at $(t_0,z_0)$.
 By definition we have $\mathcal J^2 w_\e (t_0,z_0) = \{(\tau,p,\kappa,Q)\}$ and  $\{(p,Q)\} = {\mathcal J}^2 \p (z_0)$, where $\p = w_\e (t_0,\cdot)$. 
 The viscosity inequality satisfied by $w$ at $(t_0,z_0)$ implies
$$ 
(dd^c Q)_+^n \leq  e^{ G^\e (t_0,z_0,v (t_0,z_0))} \nu^\e (t_0,z_0) .
$$

It follows that for almost all fixed $t_0 \in ]0,T[$, the function $\p (z) := w_\e (t_0,z)$
is pointwise second order differentiable at almost all $z_0 \in \Omega_\e$ and satisfies 
$$
(dd^c \p (z_0))_{+}^n \leq  e^{ G^\e (t_0,z_0, \p (z_0))} \nu^\e (t_0,z_0).
$$
 Lemma~\ref{lem:subpp} now shows that
$\p$ satisfies the viscosity inequality $(dd^c \p)^n \leq e^{ F^e (t_0,\cdot, \p)} \nu^\e (t_0,\cdot)$ 
at every point of  $\Omega_\e$.
Since $\nu^\e \to \nu$ and $G^\e \to G$ locally uniformly in $U$, it follows from the stability Lemma~\ref{lem:Stab} that $w (t_0,\cdot) = \lim_{\e \to 0} w_\e (t_0,\cdot)$ is a supersolution to the degenerate elliptic equation
$$
e^{ G (t_0,\cdot, \p)}  \nu_{t_0} - (dd^c \p)^n = 0
$$ 
in the  sense of viscosity in $\Omega$. 
This is true for almost every $t_0 \in ]0,T[$. Now given any $t_0 \in ]0,T[$, one can find a sequence of points $(t^j)$ converging to $t_0$ in $]0,T[$
such that for every $j \in \N$, the function $\p^j := \p (t^j,\cdot)$ is a supersolution to the degenerate elliptic equation associated to $(G^j,\nu^j)$, where $G^j := G (t^j,\cdot)$ and $\nu^j := \nu (t^j,\cdot)$.
Since $G^j \to G (t_0,\cdot)$ and $\nu^j \to \nu (t_0,\cdot)$ locally uniformly in $\Omega$, it follows from the stability Lemma in the degenerate elliptic case (see \cite{CIL92}) that $\p (t_0,\cdot)$ is a supersolution to the degenerate elliptic equation associated to $(G (t_0,\cdot),\nu (t_0,\cdot)$. 
\end{proof}

As a consequence we show that subsolutions to parabolic complex Monge-Amp\`ere equations
are plurisubharmonic in the space variable:

\begin{coro} \label{SPSH} 
 Assume that $u$ is a bounded subsolution to the parabolic Monge-Amp\`ere equation (\ref{eq:PMF})  in
$]0,T[ \times \Omega$. Then for any fixed  $t_0 \in [0,T[$, the function
$$
z \mapsto u(t_0,z) 
\text{ is plurisubharmonic} \, \text{ in } \, \Omega,
$$

Moreover  for all  $(t_0,z_0) \in \Omega_T$ and $(\tau,p,Q) \in  {\mathcal {P}}^{2,+} u (t_0,z_0)$, we have $ dd^c Q \geq 0$ and  $dd^c Q > 0$ when $\mu (t_0,z_0) > 0$. 
\end{coro}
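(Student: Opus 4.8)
The plan is to reduce the parabolic statement to the degenerate elliptic slicing result of Proposition \ref{lem:PartialSol}, together with the time-regularization of Lemma \ref{reg}. First I would treat the interior times $t_0 \in \,]0,T[$. The subsolution $u$ satisfies, in the viscosity sense, the differential inequality $(dd^c u_t)^n \geq e^{\partial_t u + F(t,z,u)}\mu(t,z)$; since $\mu \geq 0$ and the right-hand side is nonnegative, this forces $(dd^c Q)^n \geq 0$ for every $(\tau,p,Q) \in \mathcal{P}^{2,+}u(t_0,z_0)$, and as noted in Remark \ref{rem:parab=ellipt} (relying on \cite{EGZ11}) this actually gives $dd^c Q \geq 0$ — indeed that is precisely the content of the elliptic positivity lemma applied to the $z$-slice jet $(p,Q)$, which lies in $\mathcal{J}^{2,+}u(t_0,\cdot)(z_0)$. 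The main point is then to promote this pointwise jet positivity into genuine plurisubharmonicity of $z \mapsto u(t_0,z)$.

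To do this I would pass through the time-regularized family $u^k$ of Lemma \ref{reg}: on the shrinking slab $]A/k, T - A/k[\,\times\Omega$, $u^k$ is Lipschitz in $t$ and is a subsolution of the parabolic equation associated to $(F_k,\mu_k)$. In particular the time derivative is controlled, $|\partial_t q(t_0,z_0)| \leq k$ for any upper test function, so $u^k$ is a subsolution of a degenerate elliptic complex Monge-Amp\`ere equation of the form $e^{G(t,z,w)}\nu(t,z) - (dd^c w)^n = 0$ where we simply absorb the (bounded, continuous) factor $e^{\partial_t u^k}$ — more precisely one bounds $e^{\partial_t q}$ above by $e^{k}$ on the relevant range and obtains a clean elliptic inequality $(dd^c w_t)^n \geq e^{-k} e^{F_k(t,z,w)}\mu_k(t,z) \geq 0$ satisfied in the viscosity sense in the $(t,z)$ variables jointly. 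Now Proposition \ref{lem:PartialSol} applies to $u^k$ (with $G \equiv -\infty$-free data, or with the trivial choice giving only the positivity of the Hessian): for every $t_0$ in the slab, $z \mapsto u^k(t_0,z)$ is a subsolution of a degenerate elliptic Monge-Amp\`ere equation with nonnegative right-hand side, hence is plurisubharmonic on $\Omega$ by the elliptic theory of \cite{EGZ11}. Letting $k \to +\infty$, the first bullet of Lemma \ref{reg} gives $u^k(t_0,z) \to u(t_0,z)$ pointwise (by upper semicontinuity of $u$ in $t$), and a decreasing-limit (or upper-relaxed-semilimit) argument shows the limit $z \mapsto u(t_0,z)$ is plurisubharmonic, for every $t_0 \in \,]0,T[$. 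The case $t_0 = 0$ follows since $u$ is assumed usc up to $\{0\}\times\overline\Omega$ and $u(0,\cdot) \leq \limsup_{t\to 0^+} u(t,\cdot)$ combined with $u(0,\cdot) = \f_0 \in \mathrm{PSH}$ already on the boundary data — more directly, one takes $t_0 = 0$ via the usc regularization $(\limsup^* u(t_j,\cdot))$ along $t_j \downarrow 0$ and invokes that a decreasing limit, or usc upper envelope, of psh functions is psh.

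For the last assertion on jets: fix $(t_0,z_0) \in \Omega_T$ and $(\tau,p,Q) \in \mathcal{P}^{2,+}u(t_0,z_0)$. The slice $(p,Q)$ belongs to $\mathcal{J}^{2,+}\big(u(t_0,\cdot)\big)(z_0)$, and since $u(t_0,\cdot)$ is now known to be plurisubharmonic, the elliptic subsolution property of \cite{EGZ11} gives $dd^c Q \geq 0$ directly; alternatively this is immediate from the viscosity subsolution inequality $(dd^c Q)^n \geq e^{\tau + F(t_0,z_0,u(t_0,z_0))}\mu(t_0,z_0) \geq 0$ together with the positivity lemma. If moreover $\mu(t_0,z_0) > 0$, the right-hand side is strictly positive, so $(dd^c Q)^n > 0$, which for a nonnegative Hermitian form $dd^c Q$ forces $dd^c Q > 0$. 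The main obstacle I anticipate is purely bookkeeping: verifying that the time-regularization $u^k$ really does land in the hypotheses of Proposition \ref{lem:PartialSol} on the correct shrinking domain, and that the limit $k \to \infty$ is taken cleanly enough that plurisubharmonicity — an $\mathrm{L}^1$/usc-envelope notion — survives; the stability Lemma \ref{lem:Stab} handles the viscosity side, but one must be slightly careful that "subsolution of an elliptic equation with RHS $\geq 0$" is genuinely equivalent to "psh" here, which is exactly where the input from \cite{EGZ11} is essential.
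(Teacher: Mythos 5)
Your argument follows essentially the paper's route --- reduce to the slicing Proposition \ref{lem:PartialSol} and then invoke the elliptic theory of \cite{EGZ11} --- but with an unnecessary detour through the time-regularization Lemma \ref{reg}. The paper's proof rests on the simple observation that the right-hand side $e^{\partial_t q+F}\mu$ of the parabolic subsolution inequality is nonnegative no matter what $\partial_t q$ is, so a parabolic subsolution is already, in the joint variables $(t,z)$, a viscosity subsolution of the degenerate elliptic equation $(dd^c u_t)^n=0$; one then applies Proposition \ref{lem:PartialSol} directly with $\nu\equiv 0$, with no need for a Lipschitz bound on the time derivative, hence no $u^k$ and no limit $k\to\infty$. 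Your detour can be made to work, but note the slip in its justification: to retain a subsolution inequality you need the lower bound $\partial_t q\ge -k$ (so $e^{\partial_t q}\ge e^{-k}$), not the upper bound $e^{\partial_t q}\le e^{k}$ you invoke; since $\vert\partial_t q\vert\le k$ at a touching point of a $k$-Lipschitz-in-$t$ function, this is harmless but should be stated in the right direction. The final assertion on jets is fine and matches the paper's appeal to \cite{EGZ11}: $dd^cQ\ge 0$ from the positivity of upper test functions, and when $\mu(t_0,z_0)>0$ the inequality $(dd^cQ)^n>0$ for a semipositive form forces $dd^cQ>0$.

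The one genuinely incorrect step is your treatment of $t_0=0$. Upper semicontinuity gives $\limsup_{t\to 0^+}u(t,z)\le u(0,z)$, which is the reverse of the inequality you use, and the identification $u(0,\cdot)=\f_0$ is not among the hypotheses of the corollary (it pertains to the Cauchy--Dirichlet problem, not to an arbitrary bounded subsolution). Your limiting argument therefore only shows that the upper relaxed limit of $u(t_j,\cdot)$ as $t_j\downarrow 0$ is plurisubharmonic, not that $u(0,\cdot)$ is. To be fair, the paper's own proof does not address this endpoint either (Theorem A is stated for $t>0$, and the proof of Proposition \ref{lem:PartialSol} covers $t_0\in\,]0,T[$), but as written your justification for $t_0=0$ does not stand.
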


\begin{proof} 
We consider here the parabolic Monge-Amp\`ere equation (\ref{eq:PMF}) as a degenerate elliptic equation on $]0,T[ \times \Omega$ as explained in Remark \ref{rem:parab=ellipt}.

Since $u$ is a subsolution to the parabolic Monge-Amp\`ere equation (\ref{eq:PMF}), 
it  is also a subsolution to the degenerate elliptic equation $ (dd^c u_t)^n = 0$ in $]0,T[ \times \Omega$.
 Applying Proposition~\ref{lem:PartialSol} with $\mu \equiv 0$, we conclude that for any fixed $t_0 \in]0,T[$, the function $ w := u (t_0,\cdot)$ is a subsolution of the degenerate elliptic equation $ (dd^c w)^n = 0$ in $\Omega$.

 Therefore by \cite{EGZ11} the function $\f = u (t_0,\cdot)$ is psh in $\Omega$. The last statement follows also from \cite{EGZ11}.
\end{proof}

\begin{prop} 
 Assume that $\mu \equiv 0$ vanishes identically in some open set $D \subset \Omega$ and $v$ is a bounded supersolution to the parabolic Monge-Amp\`ere equation (\ref{eq:PMF}) in
$]0,T,[ \times D$.  

Then for all  $t_0 \in ]0,T[$
the function $z \mapsto v(t_0,z) $ is a supersolution to the degenerate elliptic equation $(dd^c w)^n = 0$ in $D$ i.e.  $(dd^c v_{t_0})^n \leq 0$ in the viscosity sense in $D$. 

If  $v$ is moreover continuous in $]0,T,[ \times D$ then the  plurisubharmonic 
envelope $P (v_{t_0})=\sup \{ u \, | \, u \text{ psh in } D \text{ and }  u \leq v_{t_0} \}$ of the function 
$z \mapsto v(t_0,z) $  satisfies 
$$
(dd^c P (v_{t_0}))^n = 0
$$
in the viscosity sense in $D$, hence it is a maximal psh function in $D$.
\end{prop}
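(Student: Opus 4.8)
The plan is as follows. The first assertion is the supersolution counterpart of Corollary~\ref{SPSH}. By Remark~\ref{rem:parab=ellipt} a viscosity supersolution of the parabolic equation~(\ref{eq:PMF}) is, in the $2n+1$ variables $(t,z)$, a supersolution of the associated degenerate elliptic equation $H=0$; over $D$, where $\mu\equiv 0$, this equation reduces to $(dd^c w)^n=0$, which depends neither on $\partial_t w$ nor on $D_z w$. I would therefore apply the supersolution case of Proposition~\ref{lem:PartialSol} with $\nu\equiv 0$ and $G\equiv 0$ — exactly as Corollary~\ref{SPSH} applies it for subsolutions — to conclude that for every $t_0\in\,]0,T[$ the slice $z\mapsto v(t_0,z)$ is a viscosity supersolution of $(dd^c w)^n=0$ in $D$, i.e. $(dd^c v_{t_0})^n\le 0$ in the viscosity sense.

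For the second assertion, set $g:=v_{t_0}$, which is now continuous (and bounded) on $D$. First I would record that $P(g)$ is plurisubharmonic in $D$: its upper semicontinuous regularisation $P(g)^{*}$ is psh and $\le g^{*}=g$, hence $P(g)^{*}\le P(g)$ and $P(g)=P(g)^{*}$; in particular $P(g)$ is a bounded psh function and a viscosity subsolution of $(dd^c w)^n=0$ by \cite{EGZ11}. The remaining point is that $P(g)$ is also a viscosity supersolution, namely that for every $z_0\in D$ and every lower test function $q$ for $P(g)$ at $z_0$ with $dd^c q(z_0)\ge 0$ one has $(dd^c q(z_0))^n\le 0$ — equivalently, that $dd^c q(z_0)$ is not positive definite.

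I would distinguish two cases according to the contact set $\{P(g)=g\}$. If $P(g)(z_0)=g(z_0)$, then $q\le P(g)\le g$ near $z_0$ with equality at $z_0$, so $q$ is also a lower test function for $g$ at $z_0$, and the supersolution property of $g$ established in the first part gives $(dd^c q(z_0))^n\le 0$. If $P(g)(z_0)<g(z_0)$, I would argue by contradiction, assuming $dd^c q(z_0)>0$, and run the standard Perron bump: by continuity of $g$ and upper semicontinuity of $P(g)$ there are $\eta>0$ and a ball $B=B(z_0,\rho)\Subset D$ with $P(g)\le g-\eta$ and $q\le P(g)$ on $B$, $q$ being moreover psh there; after replacing $q$ by $q-\lambda|z-z_0|^{4}$ — which leaves $dd^c q(z_0)$ unchanged but makes $P(g)-q\ge\lambda|z-z_0|^{4}$ on $B\setminus\{z_0\}$ — and choosing $\delta>0$ small enough, the function $\tilde q:=q+\delta(\rho^{2}-|z-z_0|^{2})$ is psh on $B$, satisfies $\tilde q<P(g)$ in a one-sided neighbourhood of $\partial B$, and $\tilde q\le P(g)+\delta\rho^{2}\le g$ on $B$. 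Then $u:=\max(P(g),\tilde q)$ on $B$ and $u:=P(g)$ on $D\setminus B$ is plurisubharmonic on $D$ with $u\le g$, hence $u\le P(g)$; this contradicts $u(z_0)\ge\tilde q(z_0)=P(g)(z_0)+\delta\rho^{2}>P(g)(z_0)$. Thus $(dd^c q(z_0))^n\le 0$ in every case, and $P(g)$ is a viscosity solution of $(dd^c w)^n=0$ in $D$. By the coincidence of the viscosity and pluripotential notions for the homogeneous complex Monge-Amp\`ere equation (\cite{EGZ11}; compare Bedford--Taylor \cite{BT82}), this yields $(dd^c P(g))^n=0$ in the pluripotential sense, i.e. $P(g)$ is maximal psh in $D$.

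The main obstacle is the non-contact case above: the radius $\rho$ and the amplitude $\delta$ must be tuned so that the glued function $u$ is at once plurisubharmonic on all of $D$ — which forces $\tilde q\le P(g)$ in a full one-sided neighbourhood of $\partial B$, whence the preliminary $|z-z_0|^{4}$-perturbation making the contact of $q$ with $P(g)$ strict — and still dominated by $g$, which is precisely where the strict gap $P(g)(z_0)<g(z_0)$ is used. A minor further point is that $P(g)$ may fail to be continuous even when $g$ is, so the closing step "viscosity solution $\Rightarrow$ maximal psh" is best routed through the viscosity/pluripotential comparison in \cite{EGZ11} rather than through any regularity of $P(g)$.
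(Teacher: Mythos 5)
Your treatment of the first assertion is exactly the paper's: pass to the degenerate elliptic equation in the $2n+1$ variables (trivially, since with $\mu\equiv 0$ the parabolic supersolution inequality no longer involves $\partial_t q$) and slice via the supersolution case of Proposition~\ref{lem:PartialSol}, just as Corollary~\ref{SPSH} does for subsolutions. For the envelope, your route differs from the paper's in an interesting way: where the paper simply invokes \cite[Lemma 4.7]{EGZ11} to get that $P(v_{t_0})$ is a viscosity supersolution, you reprove this from scratch by the contact/non-contact dichotomy and a Perron bump (the $-\lambda|z-z_0|^4$ sharpening plus the $\delta(\rho^2-|z-z_0|^2)$ lift, glued by a max). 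That argument is correct and complete as written, and it makes the viscosity statement $(dd^c P(v_{t_0}))^n=0$ self-contained, which is a genuine gain over a citation.

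The gap is in the closing step ``hence $P(v_{t_0})$ is maximal''. You dismiss the continuity of $P(v_{t_0})$ and then dispose of maximality by appealing to ``the coincidence of the viscosity and pluripotential notions for the homogeneous complex Monge--Amp\`ere equation'' in \cite{EGZ11}. No such blanket statement is available there: the equivalence in \cite{EGZ11} is for \emph{subsolutions} (Theorem 1.9), and the identification of viscosity with pluripotential solutions is proved for \emph{continuous} solutions of Dirichlet problems; the remark in the present paper immediately after this proposition stresses that precisely for supersolutions, and when the right-hand side vanishes, the dictionary is subtle. The paper bridges this by asserting continuity of $\theta=P(v_{t_0})$ and then, on each ball $\Subset D$, identifying $\theta$ with the unique viscosity solution of the homogeneous Dirichlet problem with its own (continuous) boundary values, which by \cite{EGZ11,W12} is the upper envelope of viscosity $=$ pluripotential subsolutions, i.e.\ the Perron--Bremermann envelope, hence maximal. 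Having refused the continuity route, you are left with: ``bounded psh $+$ viscosity supersolution of $(dd^c u)^n=0$ $\Rightarrow$ $(dd^c u)^n=0$ in the Bedford--Taylor sense'', which you neither prove nor can pull directly from \cite{EGZ11}. Note also that maximality does not follow formally from the envelope property alone: for $h$ psh on a ball $U\Subset D$ with $h\le P(v_{t_0})$ on $\partial U$, the glued competitor $\max(P(v_{t_0}),h)$ need not lie below $v_{t_0}$ inside $U$, so the defining property of $P$ gives nothing; the passage from the viscosity identity to pluripotential maximality is exactly where the work (or the continuity assertion of the paper) is needed. Either supply that bridge or follow the paper's ball-by-ball Dirichlet argument after justifying continuity of the envelope.
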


Recall that a psh function $u$ is {\it maximal} (see \cite{Sad81}) if for every relatively compact open set $U \subset D$
and every psh continuous function $h$ on $\overline{U}$, 
$$
h \leq u
\text{ on } \partial U \Rightarrow  
h \leq u \text{ in } U.
$$

\begin{proof} 
Since $v$ is a bounded supersolution to the parabolic Monge-Amp\`ere equation (\ref{eq:PMF}) in
$]0,T,[ \times D$ and $\mu \equiv 0$ in $D$, it follows that $v$ is a supersolution to the  degenerate elliptic equation  $(dd^c v)^n = 0$ in $]0,T[ \times D$.
Using Lemma~\ref{lem:PartialSol} with $\mu \equiv 0$, we infer  that for $t_0 \in]0,T[$, 
the function $ w := v (t_0,\cdot)$ is a supersolution of the degenerate elliptic equation 
$ (dd^c w)^n = 0$ in $D$.

When $w$ is continuous, it follows from \cite[Lemma 4.7]{EGZ11} that its plurisubharmonic envelope
$\theta := P (w)$ is a (viscosity) supersolution to the equation $ (dd^c \theta)^n = 0$ in $D$. 
Since $\theta$ is also plurisubharmonic,  we infer that $\theta$ is a viscosity solution to the homogeneous complex Monge-Amp\`ere equation $(dd^c \theta)^n = 0$ in $D$. 

Fix a ball $\B \Subset D$ and observe that the continuous psh function $\theta$
is the unique solution to the Dirichlet problem for the homogeneous complex Monge-Amp\`ere equation $(dd^c \p)^ n = 0$ in $\B$ with boundary values $\p _{|\partial \B} = \theta_{|\partial \B}$.

It is known \cite{EGZ11,W12} that the viscosity solution to this Dirichlet problem is 
 the upper envelope of all viscosity subsolutions.
Since viscosity subsolutions are exactly the pluripotential ones
 by \cite[Theorem 1.9]{EGZ11}, 
we infer that $\theta$ is the upper envelope of the pluripotential subsolutions to the Dirichlet problem above, hence it coincides with the Perron-Bremermann envelope and is a maximal psh function 
(see \cite{Bre59}, \cite{Sad81}). Thus $(dd^c \theta)^n = 0$ in the pluripotential sense 
and $\theta$ is a maximal psh function in the open set $D$.
 \end{proof}

\begin{rem}
Let $\f$ be a continuous plurisubharmonic function and $\mu \geq 0$ an
absolutely continuous measure with continuous non-negative density.
As the proof of the proposition above shows, the following are equivalent:

i) $(dd^c \f)^n=\mu$ in the pluripotential sense
of Bedford-Taylor \cite{BT82};

ii) $(dd^c \f)^n=\mu$ in the viscosity sense \cite{EGZ11}.

\noindent The dictionary between viscosity and plutipotential theory is quite subtle
when $\mu$ is allowed to vanish and
as far as supersolution are concerned. These notions however coincide for continuous solutions
of Dirichlet problems.
\end{rem}

The following immediate consequence of the previous proposition shows that one cannot 
run continuously a parabolic complex Monge-Amp\`ere flow from an arbitrary initial data,
if the measure $\mu$ is allowed to vanish:

\begin{coro} \label{SuperMax} 
   Assume that  $\f$ is a solution to the the parabolic Monge-Amp\`ere equation (\ref{eq:PMF}) in
$]0,T,[ \times \Omega$ which extends continuously to $[0,T[ \times \Omega$. 
If $\mu$ vanishes in some open set $D$, then
 for all $t \in [0,T[$, the function $\f_t$ is a maximal psh function in $D$. In particular 
$\f_0$ has to be a maximal plurisubharmonic function in $D$.
\end{coro}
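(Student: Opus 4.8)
The plan is to deduce Corollary~\ref{SuperMax} directly from the preceding Proposition, using the continuity hypothesis to pass from $t>0$ to $t=0$. First I would note that the hypotheses of the Proposition are met: on $]0,T[ \times D$ the solution $\f$ is in particular a (bounded, continuous) supersolution to (\ref{eq:PMF}), and $\mu \equiv 0$ on $D$. Hence for every $t \in ]0,T[$ the slice $z \mapsto \f_t(z)$ is a continuous supersolution of $(dd^c w)^n = 0$ in $D$; moreover, since $\f$ is a solution, Corollary~\ref{SPSH} tells us $\f_t$ is plurisubharmonic in $\Omega$, hence in $D$. A psh function equals its own psh envelope, so $P(\f_t) = \f_t$, and the Proposition then gives $(dd^c \f_t)^n = 0$ in the viscosity (equivalently pluripotential) sense in $D$, i.e.\ $\f_t$ is a maximal psh function in $D$, for every $t \in ]0,T[$.

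Next I would handle $t=0$. Since $\f$ extends continuously to $[0,T[ \times \Omega$, for each fixed $z$ we have $\f_0(z) = \lim_{t \to 0^+} \f_t(z)$, and in fact $\f_t \to \f_0$ locally uniformly on $D$ as $t \to 0^+$ by continuity on the compact slices. Each $\f_t$ ($t>0$) is a viscosity supersolution of $(dd^c w)^n = 0$ on $D$, so by the stability lemma for the degenerate elliptic equation (Lemma~\ref{lem:Stab}, elliptic version, or \cite{CIL92}) the locally uniform limit $\f_0$ is again a viscosity supersolution of $(dd^c w)^n = 0$ in $D$. It is also psh in $\Omega$ (again Corollary~\ref{SPSH}, which is stated for $t_0 \in [0,T[$), hence continuous and psh on $D$ with $P(\f_0) = \f_0$; applying the Proposition once more — or simply repeating the envelope argument of its proof with $w = \f_0$ — shows $(dd^c \f_0)^n = 0$ in the pluripotential sense in $D$, so $\f_0$ is a maximal psh function in $D$.

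The main (and really only) obstacle is the passage to the boundary time $t=0$: one must be sure that the continuity of $\f$ up to $\{0\} \times \Omega$ is strong enough to invoke the stability lemma, and that Corollary~\ref{SPSH}'s conclusion of spatial plurisubharmonicity genuinely applies at $t_0 = 0$ (it does, as stated). Everything else is a direct citation of the Proposition and of Corollary~\ref{SPSH}; no new estimates are needed. I would keep the write-up to a few lines, spelling out only the two invocations of the Proposition (for $t>0$ and, via the limit, for $t=0$) and the use of the stability lemma.
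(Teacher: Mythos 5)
Your argument is correct and is essentially the one the paper intends: it treats the corollary as an immediate consequence of the preceding proposition, exactly as you do — the solution is a continuous supersolution on $]0,T[\times D$ with $\mu\equiv 0$, its slices are psh by Corollary \ref{SPSH} so $P(\f_t)=\f_t$, and the proposition yields maximality for $t>0$. Your handling of $t=0$ (locally uniform convergence $\f_t\to\f_0$ plus elliptic stability, then the envelope argument) is the natural way to fill in the endpoint case the paper leaves implicit, and it is sound; only note that boundedness, required by the proposition, should be obtained by restricting to $]0,T'[\times D'$ with $D'\Subset D$, which suffices since maximality is local.
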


\section{The Parabolic Comparison Principle}

In this section we establish a comparison principle for the following parabolic complex Monge-Amp\` ere equation in bounded domains of $\C^n$:
\begin{equation} \label{eq:CMAF}
 e^{\partial_t \f + F (t,\cdot, \f_t)} \mu_t  - (dd^c \f_t)^n = 0,
\end{equation}
where $\mu (t,z) = \mu_t (z) \geq 0$ is a continuous family of Borel measure on $\Omega$.
 
\medskip

We begin with a technical lemma. 

\begin{lem}\label{lem:MaxEst1bis} 
 Let   $\mu (t,z) \geq 0$ and $\nu (t,x) \geq 0$ be  two continuous Borel 
measures  on $\Omega$ depending on the variables $(t,z)$ and 
$F,G : [0,T[  \times \overline{\Omega} \times \R \longrightarrow \R$ 
two continuous functions. 

Assume that $u: \bar \Omega_T \to \R$ is upper semicontinuous
and  $v: \bar \Omega_T \to \R$ is lower semicontinuous. 
Assume that the restriction of $u$ to $\Omega_T$
is a bounded subsolution to the parabolic complex Monge-Amp\`ere equation 
(\ref{eq:CMAF}) associated to $(F,\mu)$ in $\Omega_T$ and   that the restriction of $v$ to $\Omega_T$ is a bounded  
supersolution to the parabolic complex Monge-Amp\`ere equation (\ref{eq:CMAF}) associated to $(G,\nu)$  in $\Omega_T$.

Assume $(\dagger)$ that $v$ is locally Lipschitz in the time variable and either $u$ is locally Lipschitz in the time variable
or $\mu>0$. 

Then, for every $\delta>0$ either $\sup_{\bar\Omega_T}(u(t,x)-v(t,x)-\frac{\delta}{T-t})$ is attained on $\partial_0 \Omega_T$
or there exists $(\hat{t},\hat{x})\in ]0,T']\times \Omega$ where 
\begin{equation} \label{eq:restrictiont} T'=T-2\delta(\|u \|_{\infty}+ \| v\|_{\infty})
\end{equation}
such that 
$\sup(u(t,x)-v(t,x)-\frac{\delta}{T-t})$ is attained at $(\hat{t},\hat{x})$ and

\begin{equation} \label{eq:MaxEst1bis} 
 e^{ \frac{\de}{(T-\hat t)^2 } +  F (\hat t,\hat x, u (\hat t,\hat x)) - G (\hat t,\hat x,v (\hat t,\hat x))} \mu (\hat t,\hat x) \leq \nu (\hat t,\hat x).
\end{equation}
\end{lem}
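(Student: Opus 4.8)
The plan is to run the standard doubling-of-variables argument from the viscosity theory, using the parabolic Jensen--Ishii maximum principle (Theorem \ref{Jen-Ish}) as the central engine. The penalization term $\frac{\delta}{T-t}$ is a time barrier: it forces any interior maximum of $u(t,x)-v(t,x)-\frac{\delta}{T-t}$ to stay away from $t=T$ (which explains the restriction \eqref{eq:restrictiont} on $T'$), and, because $\partial_t\big(\tfrac{\delta}{T-t}\big)=\tfrac{\delta}{(T-t)^2}>0$, it produces the strictly positive exponent $\tfrac{\delta}{(T-\hat t)^2}$ in \eqref{eq:MaxEst1bis}, breaking the degeneracy one gets when $u=v$.

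First I would reduce to the doubled function: for $\e>0$ set
$$
w_\e(t,x,y):=u(t,x)-v(t,y)-\frac{1}{2\e}|x-y|^2-\frac{\delta}{T-t},
$$
which is usc on $\bar\Omega_T\times\bar\Omega$ and hence attains its max at some $(t_\e,x_\e,y_\e)$. If for some $\e$ this maximum is attained on the parabolic boundary, a short argument (using that $u\le v$ on $\partial_0\Omega_T$ forces $x_\e=y_\e$ up to $o(1)$) shows $\sup_{\bar\Omega_T}(u-v-\frac{\delta}{T-t})$ is attained on $\partial_0\Omega_T$ and we are in the first alternative. Otherwise $(t_\e,x_\e,y_\e)$ is interior for all small $\e$; the classical lemma on penalizations gives $\tfrac{1}{\e}|x_\e-y_\e|^2\to 0$ and, after extracting a subsequence, $(t_\e,x_\e,y_\e)\to(\hat t,\hat x,\hat x)$ with $\hat t\le T'$ and $\sup(u-v-\frac{\delta}{T-t})$ attained at $(\hat t,\hat x)$. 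The bound $\hat t\le T'$ comes from comparing the value of $w_\e$ at the max with its value at a fixed interior point where $u-v-\frac{\delta}{T-t}$ is, say, close to its sup.

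Next I would apply Theorem \ref{Jen-Ish} with $\phi(t,x,y)=\frac{1}{2\e}|x-y|^2+\frac{\delta}{T-t}$ at the maximum point $(t_\e,x_\e,y_\e)$. The hypothesis $(\dagger)$ is exactly what is needed to verify condition (\ref{Cond}) for $w=u$ and $w=-v$: $v$ Lipschitz in time gives (\ref{Cond}) for $-v$, and $u$ Lipschitz in time, or $\mu>0$ (via the Remark after Theorem \ref{Jen-Ish}), gives it for $u$. The theorem then yields $(\tau_1,p_1,Q^+)\in\bar{\mathcal P}^{2,+}u(t_\e,x_\e)$ and $(\tau_2,p_2,Q^-)\in\bar{\mathcal P}^{2,-}v(t_\e,y_\e)$ with $\tau_1=\tau_2+\tfrac{\delta}{(T-t_\e)^2}$, $p_1=p_2=\tfrac{1}{\e}(x_\e-y_\e)$, and the matrix inequality $\big(\begin{smallmatrix}Q^+&0\\0&-Q^-\end{smallmatrix}\big)\le A+\kappa A^2$ with $A=\tfrac{1}{\e}\big(\begin{smallmatrix}I&-I\\-I&I\end{smallmatrix}\big)$. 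By Corollary \ref{SPSH} the subsolution property forces $dd^cQ^+\ge 0$, so $Q^-$ also has $dd^cQ^-\ge 0$ by the matrix inequality (the diagonal $2\times 2$ block estimate gives $dd^cQ^+\le dd^cQ^-$ in the Hermitian sense after choosing $\kappa\sim\e$), whence $Q^-$ is an admissible jet for the supersolution. Feeding these into the sub/supersolution inequalities \eqref{eq:sub}, \eqref{eq:super},
$$
e^{\tau_1+F(t_\e,x_\e,u(t_\e,x_\e))}\mu(t_\e,x_\e)\le (dd^cQ^+)^n\le (dd^cQ^-)^n\le e^{\tau_2+G(t_\e,y_\e,v(t_\e,y_\e))}\nu(t_\e,y_\e),
$$
and substituting $\tau_1-\tau_2=\tfrac{\delta}{(T-t_\e)^2}$ yields
$$
e^{\frac{\delta}{(T-t_\e)^2}+F(t_\e,x_\e,u(t_\e,x_\e))-G(t_\e,y_\e,v(t_\e,y_\e))}\mu(t_\e,x_\e)\le\nu(t_\e,y_\e).
$$
Finally I would let $\e\to 0$: continuity of $F$, $G$, $\mu$, $\nu$, together with $u(t_\e,x_\e)\to u(\hat t,\hat x)$, $v(t_\e,y_\e)\to v(\hat t,\hat x)$ (which follows from the penalization lemma and upper/lower semicontinuity combined with the fact that the max value converges to $\sup(u-v-\frac{\delta}{T-t})$), passes to the limit and gives \eqref{eq:MaxEst1bis}.

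The main obstacle is the Hermitian comparison $dd^cQ^+\le dd^cQ^-$ and the admissibility $dd^cQ^-\ge 0$: one must extract from the real matrix inequality $\big(\begin{smallmatrix}Q^+&0\\0&-Q^-\end{smallmatrix}\big)\le A+\kappa A^2$ an inequality for the Hermitian $(1,1)$-parts, which requires testing against vectors of the form $(\xi,\xi)$ and $(\xi,i\xi)$ and using $A(\xi,\xi)=0$; the $\kappa A^2$ error must be controlled by taking $\kappa$ proportional to $\e$ so that $\kappa A^2=\frac{\kappa}{\e^2}\cdot 2\big(\begin{smallmatrix}I&-I\\-I&I\end{smallmatrix}\big)$ stays bounded. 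A secondary point requiring care is that there is no symmetry between sub- and supersolutions here (the Hamiltonian $H$ is only lsc, finite only where $dd^cQ\ge0$), so one genuinely needs Corollary \ref{SPSH} to know $Q^+$ lands in $\{dd^cQ\ge 0\}$ before the supersolution inequality can even be invoked — this is why hypothesis $(\dagger)$, and not mere boundedness, is imposed.
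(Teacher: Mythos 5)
Your proposal is correct in substance and follows essentially the same route as the paper: penalize in time by $\delta/(T-t)$, double the space variable with $\frac{1}{2\e}|x-y|^2$, invoke the parabolic Jensen--Ishii maximum principle (with $(\dagger)$ supplying condition (2.1) for $u$ and $-v$), use spatial plurisubharmonicity of the subsolution to make $(dd^cQ^+)^n$ and then $(dd^cQ^-)^n$ legitimate, chain the viscosity inequalities, and pass to the limit via the standard penalization lemma plus semicontinuity to identify $u(t_\e,x_\e)\to u(\hat t,\hat x)$ and $v(t_\e,y_\e)\to v(\hat t,\hat x)$. Two small remarks: your treatment of the first alternative appeals to ``$u\le v$ on $\partial_0\Omega_T$'', which is not a hypothesis of the lemma; the clean (and the paper's) way is simply to assume the supremum of $u-v-\frac{\delta}{T-t}$ is not attained on $\partial_0\Omega_T$, note that the set of its maximizers is then a compact subset of $]0,T']\times\Omega$, and observe that the doubled maximum points converge to such an interior maximizer, so Jensen--Ishii applies for small $\e$. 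Also, no choice $\kappa\sim\e$ is needed for the matrix inequality: since $A(\xi,\xi)=0$ and $A^2=\frac{2}{\e}A$, both $A$ and $\kappa A^2$ annihilate vectors of the form $(\xi,\xi)$, so $Q^+\le Q^-$ (hence the comparison of their $(1,1)$-parts) follows directly for any $\kappa>0$.
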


\begin{proof}
Consider 
$$
w (t,x) := u (t,x) - v(t,x) - \frac{\delta}{T - t}.
$$
This function is upper semi-continuous and bounded from above on the compact set $\overline{\Omega}_T$ and it is locally Lipschitz in the time variable. Since  $w (t,z)$ tends to $-\infty$ as 
$t \to T^-$,  there exists a point $(t_0,z_0) \in [0, T[ \times \bar{\Omega}$,  such that
$$
\overline{M} := \sup_{\overline{\Omega}_T} w = w (t_0,z_0).
$$

By construction this maximum cannot be achieved on $]T',T[\times \bar{\Omega}$. 
We can assume that this maximum   of $w$ on $\overline{\Omega}_T$  is not attained on 
$\partial_0 \Omega_T$.
The set $\{ (t,x)\in \bar \Omega_T, \ w(t,x)=\overline{M} \}$ is then a compact subset contained in $]0,T']\times \Omega$. 
Consider for small $\e > 0$, the function defined on $]0,T[ \times \Omega^2$ by 
$$
 w_\e (t,x,y) := u (t,x) - v(t,y) - \frac{\delta}{T - t} - \frac{1}{2 \e} \vert x - y\vert^2\cdot 
$$
This function is upper semi-continuous and bounded from above in $[0,T[ \times \overline{\Omega}^2$ 
by a uniform constant $C$, and it tends to $- \infty$ as $t \to T^-$, so it reaches its 
maximum on $[0,T[ \times \overline{\Omega}^2$ at some point $(t_\e,x_\e,y_\e) \in [0,T[ \times  \overline{\Omega}^2$ i.e.
$$
\overline{M}_\e := \sup_{t \in [0,T[ \times \overline{\Omega}^2} w_\e = u (t_\e,x_\e) - v (t_\e,y_\e) - \frac{\delta}{T - t_\e} - \frac{1}{2 \e} \vert x_\e - y_\e\vert^2\cdot
$$

Observe that $ \overline{M} \leq \overline{M}_\e \leq C$, which implies that any limit point 
of $(t_\e)$ is in $[0,T[$. It follows from \cite[Proposition 3.7]{CIL92} 
that $\vert x_\e - y_\e \vert^ 2 = o (\e)$ and that there is a subsequence $ (t_{\e_j},x_{\e_j},y_{\e_j})$
 converging to $(\hat t,\hat x,\hat x) \in [0,T[ \times \overline{\Omega}^2$ where
 $(\hat t,\hat x)$ is a maximum point of $w$ on $\overline{\Omega}_T$
and 
\begin{equation}
\label{eq:limmax}\lim_{j\to \infty} \overline{M}_{\e_j}=\overline{M}.
\end{equation}

To simplify notation we set  for any $j \in \N$,
  $(t_j,x_j,y_j) = (t_{\e_j},x_{\e_j},y_{\e_j})$. Extracting and relabelling we may assume 
that $(u(t_j,x_j,y_j))_j$ and $(v(t_j,x_j,y_j))_j$ converge. 
By the semicontinuity of $u$ and $v$,
\begin{equation} \label{eq:semicont}
\lim_{j\to \infty} u(t_j,x_j,y_j) \le u(\hat{t}, \hat{x}), \  \lim_{j\to \infty} v(t_j,x_j,y_j)\ge v(\hat{t}, \hat{x}).
\end{equation}

On the other hand $(\ref{eq:limmax})$ implies that:
\begin{eqnarray*} \lim_{j\to \infty} u(t_j,x_j,y_j) - \lim_{j\to \infty} v(t_j,x_j,y_j) -\frac{\delta}{T-\hat{t}}
&=&u(\hat{t}, \hat{x})-v(\hat{t}, \hat{x})   -\frac{\delta}{T-\hat{t}}, \\
\lim_{j\to \infty} u(t_j,x_j,y_j) - \lim_{j\to \infty} v(t_j,x_j,y_j) 
&=&u(\hat{t}, \hat{x})-v(\hat{t}, \hat{x}).
\end{eqnarray*}

Together with $(\ref{eq:semicont})$, this yields
\begin{equation}\label{eq:limit} \lim_{j\to \infty} u(t_j,x_j)=u(\hat{t},\hat{x}), \ \lim_{j\to \infty} v(t_j,y_j)=v(\hat{t},\hat{x}). 
\end{equation}

From our assumption we conclude that  $(\hat t,\hat x) \in ]0,T[ \times \Omega$ and
\begin{equation} \label{eq:Lineq2}
\overline{ M}  = u (\hat t,\hat x) - v(\hat t,\hat x) - \frac{\delta}{T - \hat t}\cdot
\end{equation}
  
  Applying the parabolic Jensen-Ishii's maximum principle Theorem~\ref{Jen-Ish} 
(the technical assumption (\ref{Cond}) being satisfied thanks to $(\dagger)$) to the functions 
  $U (t,x) := u (t,x) - \frac{\delta}{T - t}$,  
$ v$ and the penality function $\phi (t,x,y) :=  \frac{1}{2 \e} \vert x - y\vert^2$ 
for any fixed $\e = \e_j$, we  find  approximate parabolic second order jets
  $(\tau_j,p_j^{\pm},Q_j^{\pm}) \in \R \times \R^{2 n} \times \mathcal S_{2 n}$   such that 
 $$
 \left(\tau_j + \frac{\delta}{(T - t_j)^2} ,p_j^+,Q_j^{+}\right) \in \mathcal {\bar P}^{2,+} u (t_j,x_j), \ \ \left(\tau_j,p_j^-,Q_j^{-}\right) \in \mathcal {\bar P}^{2,-} v (t_j,y_j)$$
  with $ p_j^+ = - p_j^- =   \frac{1}{\e_j} (x_j - y_j)$ and $Q_j^+ \leq Q_j^-$ 
(see \cite[p.17]{CIL92} for the classical deduction of 
this inequality  from Theorem \ref{Jen-Ish} using the form 
 of the flat space penalization function and compare \cite{AFS08} for the difficulties in curved space). 


Applying the parabolic viscosity differential inequalities, we obtain for all $j \in \N$,
\begin{eqnarray*}
e^{\tau_j + \frac{\de}{(T-t_j)^2 } + F (t_j, x_j, u (t_j,x_j))} \mu (t_j,x_j) &\leq &  (dd^c Q_j^+)^n \\
&\leq &  (dd^c Q_j^-)^n  \\ 
& \leq & e^{\tau_j + G (t_j, y_j,v (t_j,y_j))} \nu (t_j,y_j),
\end{eqnarray*}
which implies that
$$
e^{ \frac{\de}{(T-t_j)^2 } + F (t_j, x_j,u (t_j,x_j)) - G (t_j,y_j,v (t_j,y_j))} \mu (t_j,x_j) \leq \nu (t_j,y_j),
$$
Letting $j \to + \infty$ and using (\ref{eq:limit}) (\ref{eq:Lineq2}) , we obtain the
 inequality (\ref{eq:MaxEst1bis}). 
\end{proof}

\begin{theo}\label{thm:comparison} 
 Assume that $\mu (z) \geq 0$ is a continuous  non negative volume form on $\overline{\Omega}$.
Let $u$  be a bounded subsolution to the parabolic complex Monge-Amp\`ere equation (\ref{eq:CMAF})  and $v$ a bounded  supersolution to the parabolic complex Monge-Amp\`ere equation (\ref{eq:CMAF}) in $\Omega_T$.  Then 
$$
\max_{\overline{\Omega}_T} (u  - v) \leq \max \{\max_{\partial_0 \Omega_T} (u  - v), 0\},
$$
where $u$ (resp. $v$) has been extended as an upper (resp. a lower) semicontinuous function 
to $\bar{\Omega}_T$. 
\end{theo}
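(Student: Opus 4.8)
The plan is to combine the doubling‑of‑variables estimate of Lemma~\ref{lem:MaxEst1bis} with two devices: the penalisation by $\de/(T-t)$ already built into that lemma, which will supply the strict monotonicity that $F$ lacks, and a small \psh perturbation of $u$, which will supply the strict positivity that $\mu$ may lack. As a preliminary I would reduce to Lipschitz‑in‑time data: condition (\ref{Cond}) (equivalently hypothesis $(\dagger)$ of Lemma~\ref{lem:MaxEst1bis}) need not hold for general sub/supersolutions when $\mu$ degenerates, so I replace $u$ by its sup‑convolutions $u^k$ and $v$ by its inf‑convolutions $v_k$ in time (Lemma~\ref{reg}). These are Lipschitz in $t$, one has $u^k\downarrow u$, $v_k\uparrow v$, $\mu_k=\mu$ (as $\mu$ is time independent), and $u^k$ (resp.\ $v_k$) is a sub‑ (resp.\ super‑)solution on $]A/k,T-A/k[\times\Om$ of the equation associated to a nonlinearity $F^k$ (resp.\ $G^k$) which is continuous, non‑decreasing in $r$, and converges to $F$ locally uniformly. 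Proving the comparison inequality for $(u^k,v_k)$ on these cylinders and then letting $k\to+\infty$ — using $u^k\downarrow u$, $v_k\uparrow v$, the exhaustion $]A/k,T-A/k[\times\Om\uparrow\Om_T$, and $F^k,G^k\to F$ locally uniformly — will yield the statement for $(u,v)$. Henceforth $u,v$ are Lipschitz in time, say with constant $L$, and I keep writing $\Om_T$, $F$, $G$, $T$ for the (translated) approximating data.

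For the perturbation I fix a smooth strictly \psh function $\rho$ on a \nbd of $\overline{\Om}$ with $dd^c\rho\ge\beta:=dd^c|z|^2$ and $\rho<0$ on $\overline{\Om}$ (e.g.\ $\rho=|z|^2-R^2$ with $\Om\Subset B(0,R)$), and set $u_\eta:=u+\eta\rho\le u$ for $\eta>0$. If $(\tau,p,Q)\in\mathcal P^{2,+}u_\eta(t_0,z_0)$ then $(\tau,p-\eta D\rho(z_0),Q-\eta D^2\rho(z_0))\in\mathcal P^{2,+}u(t_0,z_0)$, whence by Corollary~\ref{SPSH} its $(1,1)$‑part $dd^cQ-\eta\,dd^c\rho(z_0)$ is $\ge0$; therefore $dd^cQ\ge\eta\,dd^c\rho(z_0)\ge\eta\beta\ge0$ and, expanding the Monge--Amp\`ere operator,
$$(dd^cQ)^n\ \ge\ \bigl(dd^c(Q-\eta D^2\rho(z_0))\bigr)^n+\eta^n\beta^n.$$
Since $u$ is a subsolution and $u(t_0,z_0)=u_\eta(t_0,z_0)-\eta\rho(z_0)\ge u_\eta(t_0,z_0)$ with $F$ non‑decreasing, this gives, using also $|\tau|\le L$,
$$(dd^cQ)^n\ \ge\ e^{\tau+F(t_0,z_0,u_\eta(t_0,z_0))}\mu+\eta^n\beta^n\ \ge\ e^{\tau}\bigl(e^{F(t_0,z_0,u_\eta(t_0,z_0))}\mu+e^{-L}\eta^n\beta^n\bigr).$$
Thus $u_\eta$ is a viscosity subsolution of the parabolic Monge--Amp\`ere equation associated to $(F_\eta,\mu_\eta)$, where $\mu_\eta:=\mu+e^{-L}\eta^n\beta^n$ is a strictly positive volume form and $e^{F_\eta(t,z,r)}$ denotes the density of the volume form $e^{F(t,z,r)}\mu+e^{-L}\eta^n\beta^n$ with respect to $\mu_\eta$ — a continuous function, non‑decreasing in $r$.

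Now I fix $\eta,\de>0$, put $m:=\max\{\max_{\partial_0\Om_T}(u-v),0\}$, and apply Lemma~\ref{lem:MaxEst1bis} to the subsolution $u_\eta$ (associated to $(F_\eta,\mu_\eta)$, $\mu_\eta>0$) and the supersolution $v$ (associated to $(G,\mu)$); $(\dagger)$ holds since $v$ is Lipschitz in time and $\mu_\eta>0$. Either $\sup_{\overline{\Om}_T}\bigl(u_\eta-v-\tfrac{\de}{T-t}\bigr)$ is attained on $\partial_0\Om_T$, and then it is $\le\max_{\partial_0\Om_T}(u_\eta-v)\le m$; or it is attained at an interior point $(\hat t,\hat x)$ with
$$e^{\frac{\de}{(T-\hat t)^2}+F_\eta(\hat t,\hat x,u_\eta(\hat t,\hat x))-G(\hat t,\hat x,v(\hat t,\hat x))}\,\mu_\eta(\hat x)\ \le\ \mu(\hat x).$$
As $\mu_\eta(\hat x)=\mu(\hat x)+e^{-L}\eta^n\beta^n$, the possibility $\mu(\hat x)=0$ is excluded (the left side would be positive), so $\mu(\hat x)>0$; substituting $e^{F_\eta(\hat t,\hat x,r)}\mu_\eta(\hat x)=e^{F(\hat t,\hat x,r)}\mu(\hat x)+e^{-L}\eta^n\beta^n$ and dividing by $\mu(\hat x)$ gives $e^{\frac{\de}{(T-\hat t)^2}+F(\hat t,\hat x,u_\eta(\hat t,\hat x))-G(\hat t,\hat x,v(\hat t,\hat x))}<1$, hence (taking $G=F$ after the preliminary limit, the discrepancy $G^k\not=F^k$ being absorbed into the positive term $\tfrac{\de}{(T-\hat t)^2}$) $F(\hat t,\hat x,u_\eta(\hat t,\hat x))<F(\hat t,\hat x,v(\hat t,\hat x))$; by monotonicity of $F$ this forces $u_\eta(\hat t,\hat x)<v(\hat t,\hat x)$, so the supremum is $<0\le m$. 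In every case $\sup_{\overline{\Om}_T}\bigl(u_\eta-v-\tfrac{\de}{T-t}\bigr)\le m$; letting $\eta\to0$ (so $u_\eta\to u$ uniformly) and then $\de\to0$ gives $\max_{\overline{\Om}_T}(u-v)\le m$, which is the claim.

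The hard part is the possible vanishing of $\mu$: estimate (\ref{eq:MaxEst1bis}) is vacuous at points where $\mu$ vanishes. The \psh perturbation $u\rightsquigarrow u_\eta$ — which uses crucially that subsolutions are spatially \psh (Corollary~\ref{SPSH}) to create the extra positive term $\eta^n\beta^n$, and the Lipschitz‑in‑time bound on $\tau$ to fold it into a strictly positive density $\mu_\eta$ — simultaneously excludes $\mu(\hat x)=0$ at the extremum and supplies, together with $\tfrac{\de}{(T-\hat t)^2}$, the strict inequality that the mere (non‑strict) monotonicity of $F$ cannot. The reduction to Lipschitz‑in‑time data, dictated by the asymmetry between sub‑ and supersolutions in condition (\ref{Cond}), is the only other ingredient one has to push through carefully.
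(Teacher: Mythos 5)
Your argument is in substance the paper's own proof, reorganized: the same three devices appear -- the penalized doubling estimate of Lemma~\ref{lem:MaxEst1bis}, the Lipschitz-in-time regularizations $u^k,v_k$ of Lemma~\ref{reg}, and the perturbation $u+\eta\rho$ together with an added volume term $\eta^n\beta^n$ made admissible by the bound $|\tau|\le L$ on time-slopes (Corollary~\ref{SPSH} supplying $dd^cQ\ge 0$). Your packaging of $(F,\mu+e^{-L}\eta^n\beta^n)$ into a modified nonlinearity $F_\eta$, instead of keeping $F$ and choosing $\eta=\e^ne^{-A-C}$ as in the paper, is only cosmetic, as is exploiting $\mu_\eta\ge\mu$ inside the lemma's conclusion rather than observing that $v$ remains a supersolution for the enlarged measure.

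One point needs repair, though: the ``preliminary reduction'' as you state it -- prove the comparison inequality for the fixed-$k$ pair $(u^k,v_k)$ and then let $k\to+\infty$ -- is not a reduction to a true statement. For fixed $k$ the subsolution is attached to $F_k\le F$ while the supersolution is attached to $F^k\ge F$, and a comparison principle in which the subsolution's nonlinearity is \emph{smaller} than the supersolution's fails in general (only an approximate version, with an error governed by $F^k-F_k$, can hold). Concretely, at the interior maximum you only obtain $F_k(\hat t,\hat x,u_\eta)-F^k(\hat t,\hat x,v_k)<-\de/(T-\hat t)^2$, and the monotonicity of $F$ yields $u_\eta<v_k$ there only after the discrepancy $F^k-F_k$ has been absorbed by $\de/T^2$, which requires $k\ge k(\de)$ via the uniform continuity of $F$ on a compact set. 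So you cannot dispose of $k$ first and then send $\de\to 0$: the limits must be interleaved -- fix $\de$, let $\eta\to 0$, then $k\to+\infty$ (using Dini--Cartan to pass the boundary maxima of $u^k-v_k$ to those of $u-v$, together with the time translation needed for the shrunken cylinders), and only at the end $\de\to 0$. This is exactly how Steps 2 and 4 of the paper's proof are organized, and it is what your own parenthetical remark about absorbing the discrepancy into $\de/(T-\hat t)^2$ already points to; with that reordering made explicit, your proof is complete.
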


\begin{proof}
 

 

 {\bf Step 1.} Assume that $\mu > 0$ in $\Omega_T$ and that $v$ is locally Lipschitz in the time variable.
Fix $\delta>0$. 
Apply Lemma \ref{lem:MaxEst1bis} with $\mu=\nu$, $F=G$. It follows that either 
$$
\sup_{\bar\Omega_T} \left\{u(t,x)-v(t,x)-\frac{\delta}{T-t}\right\} = \sup_{\partial_0\bar\Omega_T} \left\{u(t,x)-v(t,x)-\frac{\delta}{T-t}\right\}
$$
or 
$$ e^{ \frac{\de}{(T-\hat t)^2 } +  F (\hat t,\hat x, u (\hat t,\hat x)) - F (\hat t,\hat x,v (\hat t,\hat x))} \le 1
$$
which implies $F (\hat t,\hat x, u (\hat t,\hat x)) - F (\hat t,\hat x,v (\hat t,\hat x))<0$ hence $u (\hat t,\hat x)))<
v (\hat t,\hat x)$. 
In either case, every $(t,x)\in \bar \Omega_T$ satisfies
 $$
u(t,x)-v(t,x) -\frac{\delta}{T-t} < \max \{0, \sup_{\partial_0\bar\Omega_T}(u-v)\}.$$
Since $\delta$ can be choosen arbitrary small, we infer: 
$$
u-v\le \max \{0, \sup_{\partial_0\bar\Omega_T}(u-v)\}.
$$

\smallskip

{\bf Step 2.} Still assuming that $\mu > 0$ in $\Omega_T$, let us remove the assumption
 that $v$ is locally Lipschitz in the time variable. Fix $\delta>0$. Either
$$\sup_{\bar\Omega_T}  \left\{u(t,x)-v(t,x)-\frac{\delta}{T-t}\right\} = \sup_{\partial_0\bar\Omega_T} \left\{u(t,x)-v(t,x)-\frac{\delta}{T-t} \right\}$$
or
$$
\sup_{\bar\Omega_T}\left\{u(t,x)-v(t,x)-\frac{\delta}{T-t}\right\} > \sup_{\partial_0\bar\Omega_T} \left\{u(t,x)-v(t,x)-\frac{\delta}{T-t}\right\}
$$

Suppose we are in the second case. 
Fix $\bar{s}\in \R$ such that
$$
\sup_{\bar\Omega_T} \left\{u(t,x)-v(t,x)-\frac{\delta}{T-t}\right\} > \bar{s} >
\sup_{\partial_0\bar\Omega_T} \left\{u(t,x)-v(t,x)-\frac{\delta}{T-t})\right\}.
$$

Since $ \bar{s} >\sup_{\partial_0\bar\Omega_T}(u(t,x)-v(t,x)-\frac{\delta}{T-t})$, we have
$\partial_0\bar\Omega_T \subset \{ w(t,x)<\bar{s} \}$. Since $\{ w(t,x)<\bar{s} \}$ is open
and contains $\{ 0 \} \times \bar\Omega$,
 we can find $\eta>0$ such $[0, \eta]\times \bar\Omega \subset \{ w(t,x)<\bar{s} \}$ so that every
$(t,x)\in [0,\eta]\times \bar\Omega \cup \partial_0\bar\Omega_T $ 
satisfies 
$$
u(t,x)-v(t,x)-\frac{\delta}{T-t}< \bar{s}.
$$ 
We now apply Lemma \ref{reg}
to $v$. Then, by Dini-Cartan's lemma we have
$$ \lim_{k\to \infty} \sup_{\bar\Omega_T} \{u(t,x)-v_k(t,x)-\frac{\delta}{T-t})=\sup_{\bar\Omega_T}(u(t,x)-v(t,x)-\frac{\delta}{T-t}\}
$$
and similarly
$$
 \lim_{k\to \infty}
 \sup_{[0,\eta]\times \bar\Omega \cup \partial_0\bar\Omega_T} \{u-v_k-\frac{\delta}{T-t}\}=
\sup_{[0,\eta]\times \bar\Omega \cup \partial_0\bar\Omega_T}\{u-v-\frac{\delta}{T-t}\}.
$$

Hence we can assume that for $k$ large enough the maximum of $w_k (t,x) := u(t,x)-v_k(t,x)-\frac{\delta}{T-t}$ 
is not attained on $[0,\eta]\times \bar{\Omega}\cup \partial_0\Omega_T$. 
Choose $k$ large enough so that the supersolution property of $v_k$ is  valid for $\eta/2 < t < T'$. 
Lemma \ref{lem:MaxEst1bis} applied to $\tilde u(t,x)= u( t+\eta, x), \  \tilde v(t,x)= v_k (t+\eta, x)$, yields 
\begin{equation} \label{eq:MaxEst2} 
 F (\hat t,\hat x , u(\hat t,\hat x)) - F^k (\hat t ,\hat  x ,v_k (\hat t,\hat x)) + \frac{\de}{T^2 } \le 
 \log (\mu^k \slash \mu) (\hat t,\hat x)),
 \end{equation}
 where $(\hat t,\hat x) = (\hat t_k,\hat x_k) \in ]0,T[ \times \Omega$ is a 
point where the function $w_k (t,x)$ takes its maximum in $\overline{\Omega}_T$. 
 
 Since $F$ and $\mu$ are uniformly continuous in $[0,T'] \times \overline{\Omega}\times [-K,K]$ with 
$K=\max(\|u\|_{\infty},\|v\|_{\infty})$, it follows that, for $k$ large enough, we have 
 \begin{equation} \label{eq:MaxEst3} 
 F (\hat t,\hat x , u(\hat t,\hat x)) - F (\hat t,\hat x, v_k (\hat t,\hat x)) \leq - \frac{\de}{ 2 T^2 }.
 \end{equation}

From this we get $u(\hat t,\hat x)<v_k (\hat t,\hat x)$. 
Hence $\sup_{\bar{\Omega}_T} w_k(t,x) <0$ and $\sup_{\bar\Omega_T}(u(t,x)-v(t,x)-\frac{\delta}{T-t})\le 0$. 

In particular, whether we are in the first or the second case, we infer
\begin{eqnarray*}
\sup_{\bar\Omega_T}(u(t,x)-v(t,x)-\frac{\delta}{T-t})&\le&
 \max(0, \sup_{\partial_0\bar\Omega_T}(u(t,x)-v(t,x)-\frac{\delta}{T-t})\\
&\le & \max(0, \sup_{\partial_0\bar\Omega_T}(u(t,x)-v(t,x)),
\end{eqnarray*}
and for every $(t,x)\in \Omega_T$ we have 
$$
u(t,x)-v(t,x)-\frac{\delta}{T-t} \le \max(0, \sup_{\partial_0\bar\Omega_T}(u(t,x)-v(t,x)).
$$ 
Since $\delta$
can be chosen arbitrary small, we conclude once again that 
$$
u-v \le \max \{0, \sup_{\partial_0\bar\Omega_T}(u-v)\}.
$$

\smallskip

{\bf Step 3.} Assume that $\mu (t,x) = \nu (t,x)\geq 0$ and the subsolution $ u $ is locally uniformly Lipschitz in $t$.

More precisely we assume that $t \longmapsto u (t,z)$ is $C$-Lipschitz in $t$  in some subset $[0,T'] \subset [0,T[$ uniformly in $z \in \Omega$. The idea is to perturb $\mu$ by adding an arbitrary small positive term.

Consider for $\eta > 0$ small enough, the positive volume form $\tilde \mu := \mu + {\eta} \beta^n$, where $\beta = dd^c \rho  > 0$ is the standard K\"ahler form on $\Omega$ i.e. $\rho (z) := \vert z \vert^2 - R^2,$ where $R > 1$ is large enough so that $\rho < 0$ in $\Omega$. 
Then fix $\e > 0$ and consider the function  $ \p (t,z) := u (t,z) + \e \rho (z)$. 
This is an upper semi-continuous function in $\Om_T$.
We claim that  $\p$ is a subsolution to the equation
\begin{equation} \label{eq:MaxEst5}
 e^{\partial_t \p (t,\cdot) + F (t,\p_t,\cdot)} \tilde \mu (t,\cdot) \leq (dd^c \p_t)^n,
\end{equation} 
in $[0,T'] \times \Omega$ for an appropriate choice of $\eta $ in terms of $\e$. 

Indeed since $\rho$ is $C^2$, any parabolic upper test function $\theta$ for $\p$ at any point $(t_0,z_0)$ can be written as $\theta (t,z) := \tilde \theta (t,z) + \e \rho (z)$, where $\tilde \theta$ is a parabolic upper test function for $u$ at the point $(t_0,z_0)$. 
 From the  viscosity inequality for $u$ we know that $dd^c \tilde \theta_{t_0} \geq 0$ and  
$$
 (dd^c \tilde \theta_{t_0})_{z_0}^n \geq e^{\partial_t \tilde{\theta} (t_0,z_0) + F (t_0,z_0,\tilde{\theta} (t_0,z_0))} \mu (t_0,z_0).
$$
Therefore $dd^c \theta_{t_0} (z_0) = dd^c \tilde \theta_{t_0} (z_0)+ \e \beta \geq 0$ and then 
$$
(dd^c \theta_{t_0})^n_{z_0} \geq  (dd^c \tilde \theta_{t_0})^n  + \e^n \beta^n \geq e^{\partial_t \theta (t_0,z_0) +  F (t_0,{\theta} (t_0,z_0),z_0)} \mu (t_0,z_0) +\e^n \beta^n,
$$
since $\theta \leq \tilde \theta$ and $F$ in non decreasing in the second variable.

Now set $M := \sup_{\Omega_{T'}} u$ and $A := \sup \{F (t,z,M) ; 0 \leq t \leq  T', z \in \Omega\}$.  Since $ u$ is $ C$-Lipschitz in $t$ uniformly in $z$ and $u \leq_{(t_0,z_0)} \theta$, it follows from Taylor's formula that $\partial_t \theta (t_0,z_0) \leq  C$. Then 
$$
 e^{\partial_t \theta (t_0,z_0) + F (t_0,z_0,\theta (t_0,z_0))}  \leq  e^{C + A} .
 $$ 
 Therefore if we choose $\eta := \e^n e^{-A - C}$,  we obtain the inequality
$$
(dd^c \theta_{t_0})^n_{z_0} \geq  e^{\partial_t \theta (t_0,z_0) + F (t_0,z_0,\theta (t_0,z_0))} (\mu (t_0,z_0) + \eta \beta^n),
$$
which proves our claim.

Since $\tilde \mu \geq \mu$, the function $v$ is also a supersolution to the 
parabolic equation associated to $(F,\tilde \mu)$. We can apply the 
comparison principle of the first part and conclude that 
$u (t,z) + \e \rho (z) - v(t,z) \leq \max_{\partial_0 \Omega_T} (u - v)^+ + O (\e).$ 
Letting $\e \to 0$ we obtain the conclusion of the theorem.

\smallskip

{\bf Step 4.} Finally assume that $\mu (z)= \nu (z)\geq 0$ does not depend on $t$.  
Regularizing $u$ in the time variable only according to Lemma~\ref{reg}, we obtain a decreasing sequence $u^k$ of $k$-Lipschitz functions in $t$ converging to $u$. We know by   (\ref{eq:subdiffineq}) that $U = u^k$ is a subsolution to the parabolic Monge-Amp\`ere equation associated to $(F_k,\mu)$  i.e. 
$$
  e^{\partial_t U (t,\cdot) + F_k (t,\cdot,U_t)} \mu \leq (dd^c U_t)^n,
$$ 
where $F_k (t,z,r) := \inf_{\vert s - t\vert \leq 1 \slash k} F (s,z,r)$
on $[A/k, T-A/k]\times \Omega$. Observe that $\mu$ does not change after regularization in the time variable since it does not depend on $t$.

Using the perturbation argument of Step 3, we see that the function 
$ \psi^k := u^k (t,z) + \e \rho (z)$ satisfies the differential inequality
$$
  e^{\partial_t \psi (t,\cdot) + F_k (t,\cdot,\psi_t)} (\mu + \eta \beta^n) \leq (dd^c \p_t)^n,
$$ 
where $\eta := \e^n e^{-A - k}$  on $[A/k, T-A/k]\times \Omega$.

We now regularise $v$ in the time variable only according to Lemma~\ref{reg} and argue as in Step 2 to conclude
that, in the second case,
\begin{equation} \label{eq:MaxEst4}  
 F_k (\hat t,\hat x , u_k(\hat t,\hat x) + \e \rho (\hat t,\hat x)) -
 F^k (\hat t ,\hat  x ,v^k (\hat t,\hat x)) + \frac{\de}{T^2 } \le 
 \log (\mu \slash \tilde{\mu}) (\hat t,\hat x)),
 \end{equation}
 where $(\hat t,\hat x) = (\hat t_k,\hat x_k) \in ]0,T[ \times \Omega$ is a 
point where the function $u_k( t, x) + \e \rho ( t,x)-v^k ( t, x))-\frac{\delta}{T-t}$
achieves its maximum. 
 Since $F$  is uniformly continuous in $[0,T'] \times 
\overline{\Omega}\times [-K-2\epsilon R^2,K]$ and $\mu\le \tilde {\mu}$ with 
$K=\max(\|u\|_{\infty},\|v\|_{\infty})$, it follows that for $k$ large enough, 
 \begin{equation} 
 F (\hat t,\hat x , u_k(\hat t,\hat x) + \e \rho (\hat t,\hat x))) - F (\hat t,\hat x, v^k (\hat t,\hat x))
 \leq - \frac{\de}{ 2 T^2 }.
 \end{equation}

This yields:
$$ u_k(\hat t,\hat x) + \e \rho (\hat t,\hat x) < v^k (\hat t,\hat x)
$$
and for all $(t,x)\in \bar\Omega_T$
$$ u_k( t, x) + \e \rho ( t, x) - v^k (\hat t,\hat x)-\frac{\delta}{T-t}<0. 
$$

We can then let $\e$ decrease to $0$, then let $k$ go to $+\infty$
arguing as in the last part of Step 2, to conclude that 
\begin{eqnarray*}
\sup_{\bar\Omega_T}(u(t,x)-v(t,x)-\frac{\delta}{T-t})&\le&
 \max(0, \sup_{\partial_0\bar\Omega_T}(u(t,x)-v(t,x)-\frac{\delta}{T-t})\\
&\le & \max(0, \sup_{\partial_0\bar\Omega_T}(u(t,x)-v(t,x)).
\end{eqnarray*}
Letting $\delta$ decrease to $0$, we conclude the proof. 
\end{proof}

\begin{rem}
As the proof shows, the comparison principle is valid under more general conditions than 
those stated in the theorem, in particular:
 when the volume form $\mu (t,z) > 0$ depends on $(t,z)$ and does not vanish on $\overline{\Omega_T}$. 
 \end{rem}

\begin{rem}
An important case for applications is when $F$ is strongly increasing in the last variable, meaning that there exists $\al > 0$ such that for any $(t,z)$, the function $r \longmapsto F (t,z,r) - \al r$ is non decreasing in $\R$. 
Then we can prove a more precise comparison principle. Namely assume that 
$\mu (t,z) > 0$ and $\nu (t,z) \geq 0$ are two continuous volume forms  in $\Omega_T$, 
 $u$ is a subsolution to the parabolic complex Monge-Amp\`ere equation 
associated to $(F,\mu)$ and  $v$ is a supersolution to the parabolic Monge-Am\`pere equation associated to $(F,\nu)$, then 
 $$
 \max_{\Omega_T} (u  - v) \leq  \max \{M_0, (1 \slash \al) \log \gamma\}
 $$
 where $M_0 := \max_{\partial_0 \Omega_T} (u  - v)^+$ and $\gamma := \max_{\Omega_T} \nu \slash \mu$.
 This follows from the fundamental inequality (\ref{eq:MaxEst1bis}).\\
 \end{rem}
 
 \begin{rem} \label{GCP} 
 A  change of variables in time leads to the more general twisted parabolic complex Monge-Amp\`ere equation
 \begin{equation} \label{eq:GPCMAF}
 e^{ h (t) \partial_t \f + F (t,z,\f)} \mu (t,z) - (dd^ c \f_t)^ n = 0,
 \end{equation}
 where $h > 0$ is positive continuous function in $[0,T[$.
 
 The comparison principle holds for the more general parabolic complex Monge-Amp\`ere equation (\ref{eq:GPCMAF}).
 This follows from the change of variables
 $$
  u (s,z) := \f (t,z), \, \, \text{whith}, \, \,  t = \gamma (s)
 $$
 where $\gamma $ is a positive increasing function in $[0,S[$ with values in $[0,T[$ such that 
$\gamma (0) = 0$.
 
 Indeed observe that $\partial_s u = \gamma' (s) \partial_t \f (t,z)$ for $(s,z) \in [0,S[ \times \Omega$. 
Thus if we set
 $\gamma' (s) =  h (t)$, then  $\f$ is a solution to the twisted parabolic complex Monge-Amp\`ere equation (\ref{eq:GPCMAF}) if and only if $u$ is a solution to the  parabolic complex Monge-Amp\`ere equation
 $$
 e^{\partial_s u +  G (s,z,u)} \nu (s,z) - (dd^ c u_s)^ n = 0,
 $$
where $ G (s,z,r) := F (\gamma (s),z, r)$ and $\nu (s,z) := \mu (\gamma (s),z)$. 

Since $r \longmapsto G (t,z,r)$ in non decreasing, we can apply the comparison principle proved above and 
obtain the claim.
Observe that the equation $\gamma' (s) =  h (t)$ means that 
the inverse function $g (t)= \gamma^{- 1} (t)= s$ satisfies
$g' (t) = 1 \slash h (t)$ and $g (0) = 0$, thus $\gamma$ is uniquely determined by $h$.
\end{rem}

\section{Existence of solutions}

We now study the Cauchy-Dirichlet problem for the parabolic complex Monge-Amp\` ere equation
  \begin{equation} \label{eq:PMAF}
 e^{\partial_t \f +  F (t,\cdot,\f)} \mu - (dd^c \f_t)^n = 0, \, \, \text{in} \, \, \Omega_T,
  \end{equation}
 with  Cauchy-Dirichlet conditions,
  \begin{equation} \label{eq:CD-PMAF}
  \f (t,z) = \f_0 (z), \ \, \,  (t,\zeta) \in \partial_0 \Omega,
  \end{equation}
  where $\mu (z)\geq 0$  is a continuous volume form in $\Omega$, $\f_0 : \Omega \longrightarrow \R$ is continuous in $\bar \Omega$ and plurisubharmonic in $\Omega$ and $F : [0,T[ \times \Omega \times \R \longmapsto \R$ is a continuous function non decreasing in the last variable.

We assume that $\Omega$ is a strictly pseudoconvex domain and let $\rho$ 
be a defining function for $\Omega$ which is strictly plurisubharmonic in a neighborhood of $\overline{\Omega}$, with $- 1 \leq \rho < 0$ in $\Omega$.

\subsection{Existence of sub/super-solutions}

We first introduce  the notions of sub/super-solution for the Cauchy-Dirichlet problem:

 \begin{defi} Let  $\f_0$ be a  Cauchy-Dirichlet data function for the parabolic Monge-Amp\`ere equation (\ref{eq:CD-PMAF}).  

 1. We say that an upper semi-continuous function $u : [0,T[ \times \overline{\Omega} \longrightarrow \R$ is a subsolution to the Cauchy-Dirichlet problem (\ref{eq:CD-PMAF}) if $u$ is a subsolution to the parabolic  equation (\ref{eq:PMAF}) in $\Omega_T$ which satisfies  $u \leq \f_0$ on the parabolic boundary 
$\partial_0 \Omega_T$.

  2. We say that a lower semi-continuous function $v : [0,T[ \times \overline{\Omega} \longrightarrow \R$ is a supersolution  to the Cauchy-Dirichlet problem (\ref{eq:CD-PMAF}) if $v$ is a supersolution to the parabolic  equation (\ref{eq:PMAF}) in $\Omega_T$ which satisfies  $v  \geq \f_0$  on the parabolic boundary 
$\partial_0 \Omega_T$.
 \end{defi}

Observe  that sub/supersolutions to the parabolic complex Monge-Amp\`ere equation
we are interested in always exist:

 \begin{lem} \label{sub-super} 
\text{ }

1.  The constant function $\overline v \equiv  \sup_{\overline{\Omega}} \f_0 $ is a supersolution to  the Cauchy-Dirichlet problem (\ref{eq:CD-PMAF}) with Cauchy-Dirichlet data $\f_0$.

2. The Cauchy Dirichlet problem for the parabolic equation (\ref{eq:PMAF}) with initial data $\f_0$ admits a subsolution $\underline u$ in $]0,T[ \times \Omega$, which is continuous in $[0,T[ \times \overline{\Omega}$ and  satisfies $\underline u \leq \overline v$ in $[0,T[ \times \overline{\Omega}$. 
\end{lem}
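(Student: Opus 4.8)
For the supersolution statement, the plan is simple: set $\overline v := \sup_{\overline\Omega}\f_0$, a constant. Since $dd^c\overline v_t\equiv 0$, for any lower test function $q$ for $\overline v$ at $(t_0,z_0)$ with $dd^c q_{t_0}(z_0)\ge 0$ we need $(dd^c q_{t_0})^n(z_0)\le e^{\partial_t q(t_0,z_0)+F(t_0,z_0,\overline v)}\mu(t_0,z_0)$. Because $\overline v$ is constant in $t$ and a constant is a lower test function for itself, one checks that any lower test function at a local minimum of $\overline v-q$ has $\partial_t q(t_0,z_0)=0$ and $dd^c q_{t_0}(z_0)\le 0$; combined with the requirement $dd^c q_{t_0}(z_0)\ge 0$ this forces $dd^c q_{t_0}(z_0)=0$, whence $(dd^c q_{t_0})^n(z_0)=0\le e^{\cdots}\mu\ge 0$ since $\mu\ge 0$. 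So $\overline v$ is a supersolution of \eqref{eq:PMAF}, and $\overline v\ge\f_0$ on $\partial_0\Omega_T$ trivially (with equality on $\{0\}\times\overline\Omega$ when $\f_0$ attains its sup, but in any case $\ge$), so it is a supersolution of the Cauchy-Dirichlet problem.

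For the subsolution, the idea is to look for $\underline u$ of the form $\underline u(t,z):=\f_0(z)+\chi(t)$ or, more robustly, $\underline u(t,z):=A\rho(z)+\f_0(z)+\chi(t)$ for suitable constants, but since $\f_0$ is merely continuous psh (not $C^2$) one cannot differentiate it directly; instead use the viscosity formalism. First enlarge $\f_0$ by the defining function: since $\Omega$ is strictly pseudoconvex with defining function $\rho$ strictly psh near $\overline\Omega$ and $-1\le\rho<0$, the function $z\mapsto \f_0(z)+C\rho(z)$ is psh and, by strict plurisubharmonicity of $\rho$, has Monge-Amp\`ere "mass" bounded below on any neighbourhood. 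More precisely, for an upper test function $q$ for $\f_0+C\rho+\chi(t)$ at $(t_0,z_0)$, write $q=\tilde q+C\rho+\chi(t)$; then $\tilde q$ is an upper test function for $\f_0$, so $dd^c\tilde q_{t_0}(z_0)\ge 0$ (subsolutions of $(dd^c\cdot)^n\ge0$ are psh in the test sense — by the elliptic theory of \cite{EGZ11}, using that $\f_0$ is psh), hence $dd^c q_{t_0}(z_0)\ge C\, dd^c\rho(z_0)>0$ and so $(dd^c q_{t_0})^n(z_0)\ge C^n (dd^c\rho(z_0))^n\ge C^n\epsilon_0^n$ for some $\epsilon_0>0$ uniform on $\overline\Omega$. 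Meanwhile $\partial_t q(t_0,z_0)=\chi'(t_0)$. So the subsolution inequality $(dd^c q_{t_0})^n(z_0)\ge e^{\chi'(t_0)+F(t_0,z_0,\f_0(z_0)+C\rho(z_0)+\chi(t_0))}\mu(t_0,z_0)$ will hold as soon as $C^n\epsilon_0^n\ge e^{\chi'(t_0)+A}\,\|\mu\|_\infty$, where $A:=\sup\{F(t,z,r): 0\le t\le T,\ z\in\overline\Omega,\ |r|\le \|\f_0\|_\infty\}$ — a finite bound when $T<\infty$; for $T=+\infty$ one chooses $\chi$ growing suitably (see the change of variables in Remark 1.11 / the normalization $F(t,z,r)=\alpha r$) to keep the relevant sup finite. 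Choose, e.g., $\chi(t)\equiv 0$ and then $C$ large enough depending on $\epsilon_0,\|\mu\|_\infty,A$; the term $C\rho$ is $\le 0$, so to get $\underline u\le\f_0$ on $\{0\}\times\overline\Omega$ one needs $\chi(0)\le 0$, which holds. On $[0,T[\times\partial\Omega$ we have $\rho=0$ and $\chi\le 0$ so $\underline u\le\f_0=h$ there too.

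The one genuine subtlety is matching the boundary/initial data: with $\underline u=\f_0+C\rho$ we get $\underline u=\f_0$ on $[0,T[\times\partial\Omega$ but $\underline u=\f_0+C\rho<\f_0$ strictly at $t=0$ inside $\Omega$, which is fine ($\le$ is all that is required) and in fact $\underline u\to\f_0$ at the boundary so $\underline u$ is continuous on $[0,T[\times\overline\Omega$ with $\underline u|_{\partial_0\Omega_T}\le\f_0$; one should also record $\underline u\le\sup_{\overline\Omega}\f_0=\overline v$, which is immediate since $C\rho\le 0$ and $\f_0\le\sup\f_0$. I expect the main obstacle to be purely bookkeeping: verifying the test-function computation cleanly in the viscosity language (the step that $\tilde q$ is a legitimate upper test function for $\f_0$ and that $dd^c\tilde q_{t_0}(z_0)\ge0$, invoking \cite{EGZ11}), and, if one insists on $T=+\infty$, arranging $\chi$ and $C$ so that the estimate is uniform in $t$ — this is where the hypotheses on $F$ (and the reduction in Remark 1.11) are used.
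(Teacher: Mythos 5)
Your proposal is correct and follows essentially the same route as the paper: the constant $\overline v=\sup_{\overline\Omega}\f_0$ is checked trivially to be a supersolution, and the subsolution is built as $\f_0$ plus a large multiple of the strictly psh defining function $\rho$ plus a time-dependent correction, the viscosity inequality being verified via the decomposition of upper test functions and the elliptic results of \cite{EGZ11}. The only soft spot is your treatment of the time variable: taking $\chi\equiv 0$ with a constant $A$ presupposes a uniform-in-$t$ bound on $F(t,z,\f_0(z))$, which is not guaranteed on $[0,T[$; the clean fix (and the paper's choice) is precisely $\chi(t)=-\int_0^t \bigl(\sup_{z}F(s,z,\f_0(z))\bigr)_+\,ds$, so that $\chi'(t)+F(t,z,\underline u(t,z))\le 0$ by monotonicity of $F$ in $r$ and one only needs $C^n(dd^c\rho)^n\ge\mu$ — the change of variables of Remark 1.11 is not the relevant device here.
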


\begin{proof} 
The first statement is obvious. 
To prove the second one, we consider the function defined for $t \in [0,T[$ by
 \begin{equation} \label{eq:Comp}
 B (t) := \int_0^t b_+ (s) d s, \, \, \, \text{where} \, \, 
  b (t) := \sup \{ F (t,z,\f_0 (z)) ;  z \in \Omega \}. 
  \end{equation}
 Choose $A > 0$ large enough so that $A (dd^c \rho)^n \geq \mu$  in $\Omega$. 
The  function
 $$
(t,z) \mapsto  \underline u (t,z) := A \rho (z) + \f_0 (z) - B (t)
 $$
 is a subsolution to Cauchy-Dirichlet problem for the parabolic complex Monge-Amp\`ere equation (\ref{eq:CD-PMAF}) which clearly satisfies $\underline u \leq \overline v$.
\end{proof}

 \begin{rem}
 Observe that the supersolution given above is bounded, while the subsolution $\underline u$ is continuous in $[0,T[ \times \overline{\Omega}$, hence  locally bounded. 
 When  $ (t,z) \longmapsto F (t,z,\f_0 (z))$ is bounded from above 
on $[0,T[ \times \overline{\Omega}$,  there exists a globally bounded subsolution. 
 Indeed set  
 $$
 B := \sup \{ F (t,z,\f_0 (z)) ; 0 \leq t < T, z \in \Omega \},
 $$ 
 and let $A > 1$ be so large that  $A^n (dd^c \rho)^n \geq e^{B} \mu$. 
Then 
$$
\underline u (t,z) := A \rho (z) + \f_0 (z),
$$ 
does the job.
\end{rem}

Consider the  upper envelope
 \begin{equation} \label{eq:env}
 \f := \sup \{ u \ ; u \in \mathcal S, \underline u \leq u \leq \overline v \},
 \end{equation}
 where $\mathcal S$ is the family of all subsolutions to the Cauchy-Dirichlet problem for the parabolic equation (\ref{eq:PMAF}) with the Cauchy-Dirichlet condition (\ref{eq:CD-PMAF}),
and $\underline u , \overline v$ are the sub/super-solutions from Lemma \ref{sub-super}.

 \begin{lem} \label{lem:Perron1}
 Given any non empty family $\mathcal{S}_0$ of bounded subsolutions to the parabolic equation (\ref{eq:PMAF})  which is bounded above by a continuous function, the usc regularization of the upper envelope $\phi_{ \mathcal{S}_0} :=\sup_{\phi \in \mathcal{S}_0} \phi$
is a subsolution to (\ref{eq:PMAF}) in $\Omega_T$.

If $\mathcal{S}$ is the family of all subsolutions to the Cauchy-Dirichlet problem  (\ref{eq:CD-PMAF}), its envelope
$ \phi_{ \mathcal{S}}$ coincides with the upper envelope $\f$ given by the formula  (\ref{eq:env}) and is a discontinuous viscosity solution to  
(\ref{eq:PMAF}) in $\Omega_T$.
 
 Moreover for any  $(t,z) \in [0,T[ \times \Omega$,
 \begin{equation} \label{eq:Perron1}
 \f^* (t,z)  - \f_* (t,z) \leq \sup_{\partial_0 \Omega_T} (\f^*   - \f_* )_+.
 \end{equation}
 \end{lem}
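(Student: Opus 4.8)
The statement bundles four claims: the ``envelope of subsolutions is a subsolution'' principle, the identification $\phi_{\mathcal S}=\f$, the supersolution property of $\f_*$ (so that $\f$ is a discontinuous viscosity solution), and the jump estimate (\ref{eq:Perron1}). I would prove them in that order. For the envelope principle the plan is to run the standard (parabolic) Ishii argument: writing $\overline\f:=(\sup_{\phi\in\mathcal S_0}\phi)^{*}$, fixing an interior point $(t_0,z_0)$ and an upper test function $q$ for $\overline\f$, I would first perturb $q$ (by adding $|t-t_0|^{2}+|z-z_0|^{4}$) to make $(t_0,z_0)$ a strict local maximum of $\overline\f-q$, then use the definition of the usc regularisation of a supremum to produce $\phi_j\in\mathcal S_0$ and points $(s_j,w_j)\to(t_0,z_0)$ at which $\phi_j-q$ has a local maximum with $\phi_j(s_j,w_j)\to\overline\f(t_0,z_0)$. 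A vertical translate of $q$ is then an upper test function for the subsolution $\phi_j$ at $(s_j,w_j)$, so by Corollary~\ref{SPSH} one has $dd^c q_{s_j}(w_j)\ge 0$ and
\[
 e^{\partial_t q(s_j,w_j)+F(s_j,w_j,q(s_j,w_j)+c_j)}\,\mu(s_j,w_j)\ \le\ (dd^c q_{s_j})^n(w_j),
\]
with $c_j\to 0$; letting $j\to\infty$ and using continuity of $(dd^c\cdot)^n$ on the positive cone and of $F,\mu$ gives the subsolution inequality for $q$ at $(t_0,z_0)$. The same argument applied to a two‑element family shows $\max(\phi,\psi)$ is a subsolution.

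For $\phi_{\mathcal S}=\f$ one inclusion is trivial. For the other, given $u\in\mathcal S$ I would apply the comparison principle (Theorem~\ref{thm:comparison}) to $u$ and the supersolution $\overline v\equiv\sup_{\overline\Omega}\f_0$ of Lemma~\ref{sub-super}, noting $u\le\f_0\le\overline v$ on $\partial_0\Omega_T$, to get $u\le\overline v$ on $\overline{\Omega}_T$; then $\max(u,\underline u)$ is a subsolution (by the envelope principle), satisfies $\underline u\le\max(u,\underline u)\le\overline v$ and $\max(u,\underline u)\le\f_0$ on $\partial_0\Omega_T$, hence is a competitor in (\ref{eq:env}), so $u\le\f$. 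Taking the supremum over $\mathcal S$ yields $\phi_{\mathcal S}\le\f$, whence equality. Consequently (envelope principle applied to the sandwiched family $\{u\in\mathcal S:\underline u\le u\le\overline v\}\ni\underline u$) the function $\f^{*}$ is a subsolution of (\ref{eq:PMAF}) in $\Omega_T$; it is finite since $\f\le\overline v$, and $\f_*\ge\underline u>-\infty$ pointwise.

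The remaining point, that $\f_*$ is a supersolution of (\ref{eq:PMAF}) in $\Omega_T$, is Perron's ``bump'' argument and is where I expect the real work to be. Suppose it fails at some interior $(t_0,z_0)$: there is a lower test function $q$ for $\f_*$ there with $dd^c q_{t_0}(z_0)\ge 0$ and $(dd^c q_{t_0})^n(z_0)>e^{\partial_t q+F}\mu$ at $(t_0,z_0)$; strictness forces $dd^c q_{t_0}(z_0)>0$, leaving room to perturb. I would set $q_\gamma(t,z):=q(t,z)+\gamma-\alpha(|t-t_0|^{2}+|z-z_0|^{2})$, which for $\alpha,\gamma$ small is a classical subsolution on a small ball $B\Subset\Omega_T$ centred at $(t_0,z_0)$ and satisfies $q_\gamma<\f_*\le\f^{*}$ on the outer shell of $B$; then $W:=\max(\f^{*},q_\gamma)$ on $B$ and $W:=\f^{*}$ off $B$ is an upper semicontinuous subsolution of (\ref{eq:PMAF}) in $\Omega_T$ agreeing with $\f^{*}$ near $\partial_0\Omega_T$. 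Since $\f_*(t_0,z_0)=q(t_0,z_0)$ yields a sequence $(t_j,z_j)\to(t_0,z_0)$ with $\f(t_j,z_j)\to q(t_0,z_0)$, we get $W(t_j,z_j)\ge q_\gamma(t_j,z_j)>\f(t_j,z_j)$ for $j$ large, contradicting the maximality of $\f=\phi_{\mathcal S}$. Hence $\f_*$ is a supersolution and $\f$ is a discontinuous viscosity solution of (\ref{eq:PMAF}) in $\Omega_T$.

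Finally, for (\ref{eq:Perron1}): $\f^{*}$ is now a bounded subsolution and $\f_*$ a bounded supersolution of (\ref{eq:PMAF}) in $\Omega_T$ (on any $[0,T']\Subset[0,T[$ the function $\underline u$ is bounded below), so Theorem~\ref{thm:comparison} gives $\max_{\overline{\Omega}_T}(\f^{*}-\f_*)\le\max\{0,\max_{\partial_0\Omega_T}(\f^{*}-\f_*)\}=\sup_{\partial_0\Omega_T}(\f^{*}-\f_*)_{+}$, which is exactly (\ref{eq:Perron1}). I expect the main obstacle to be the bump step of the third paragraph: one must carry out the perturbation while preserving the positivity $dd^c(q_\gamma)_{t}\ge 0$ demanded of complex Monge--Amp\`ere subsolutions (this is precisely why the strict violation, forcing $dd^c q_{t_0}(z_0)>0$, is needed), and one must check that the glued competitor $W$ is genuinely upper semicontinuous and keeps the boundary values of $\f^{*}$ on $\partial_0\Omega_T$, so that it is still admissible for the envelope defining $\f$.
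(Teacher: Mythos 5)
Most of your plan coincides with the route the paper merely outsources to \cite{CIL92}: the Ishii argument for the usc envelope of a family of subsolutions, the identification $\phi_{\mathcal S}=\f$ via Lemma \ref{sub-super} and Theorem \ref{thm:comparison} (with the max of a member with $\underline u$), and the derivation of (\ref{eq:Perron1}) by applying Theorem \ref{thm:comparison} to the subsolution $\f^*$ and the supersolution $\f_*$ on each $\Omega_{T'}$, $T'<T$. Those parts are fine and are exactly what the paper intends.

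The genuine gap is in the bump step, and it is not quite the check you flag at the end. To contradict the maximality of $\f=\phi_{\mathcal S}$ you must produce a \emph{member of $\mathcal S$} (equivalently, by your second paragraph, of the sandwiched family in (\ref{eq:env})) that exceeds $\f$ somewhere. Your glued $W$ is indeed an usc subsolution of (\ref{eq:PMAF}) in $\Omega_T$, but membership in $\mathcal S$ also requires $W\le\f_0$ on $\partial_0\Omega_T$; since $W=\f^*$ near the parabolic boundary, this amounts to $\f^*\le\f_0$ on $\partial_0\Omega_T$. So ``keeping the boundary values of $\f^*$'' is not the right condition: what is needed is an upper bound of $\f^*$ by $\f_0$ on the parabolic boundary, and this is precisely what is \emph{not} available at the stage of Lemma \ref{lem:Perron1} --- it is obtained only later, under the admissibility hypothesis, from the superbarriers of Proposition \ref{prop:bar} in the proof of Theorem \ref{thm:existenceflot} (and it can genuinely fail without admissibility, e.g.\ when $\mu$ vanishes on an open set where $\f_0$ is not maximal: each member satisfies $u\le\f_0$ on $\partial_0\Omega_T$, but the usc regularization of the supremum can jump above $\f_0$ there). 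Hence, as written, $W$ is not known to be admissible and no contradiction follows. The standard Perron argument the paper cites (Theorem 4.1 of \cite{CIL92}) avoids this by constraining the family only by a bounding \emph{supersolution}: there, admissibility of the bumped competitor is checked via the dichotomy that at a point where the supersolution inequality for the lower envelope fails one must be strictly below the bounding supersolution (otherwise a lower test there is a lower test for that supersolution, and, $F$ being non decreasing, no violation can occur), so a small enough bump stays below it. To prove the lemma in the form stated (for the Dirichlet-constrained envelope $\f$) you must either carry out such a reconciliation between the two envelopes or find another device controlling $\f^*$ on $\partial_0\Omega_T$; this missing step is the real content hidden behind the paper's citation, and your proposal does not supply it.
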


 \begin{proof} 
The first statement follows from the standard method of Perron (see \cite{CIL92}, \cite{IS12}). 
 Observe that the family $\mathcal{S}$  of all subsolutions to the Cauchy-Dirichlet 
problem  (\ref{eq:CD-PMAF}) is not empty since $\underline u \in \mathcal{S}$ and  bounded
from above  by $\overline v$, thanks to Lemma~\ref{sub-super} and  Theorem \ref{thm:comparison}. 

The fact that $\f$ is a discontinuous viscosity solution to  
(\ref{eq:PMAF}) in $\Omega_T$ follows by the general argument of Perron as in the degenerate elliptic case (see \cite{CIL92}, \cite{EGZ11}).
 \end{proof}

\subsection{Barriers}
 
  In order to prove that the above (a priori discontinuous) viscosity solution is a continuous viscosity
 solution and satisfies the Cauchy-Dirichlet condition, we need to construct appropriate barriers.

 \begin{defi}  
Fix $(t_0,x_0) \in \partial_0 \Omega_T$  and  $\e > 0$.

 1.  We say that an upper semi-continuous function $u : \Omega_T \longrightarrow \R$ is an 
$\e$-subbarrier  for the Cauchy problem (\ref{eq:CD-PMAF}) at the point $(t_0,x_0)$, 
if $u$ is a subsolution  to the  parabolic complex Monge-Amp\`ere equation (\ref{eq:PMAF})
 such that $u \le \f_0$ in $\partial\Omega_T$ and  $u_* (t_0,x_0) \geq \f_0 (x_0) - \e$. 

2. We say that a lower semi-continuous function $v :  \Omega_T \longrightarrow \R$ is an 
 $\e-$superbarrier to the Cauchy problem (\ref{eq:CD-PMAF}) at the boundary point $(t_0,x_0)$ if 
$v$ is a supersolution  to the parabolic equation (\ref{eq:PMAF}) such that $v \ge \f_0$ in $\partial_0\Omega_T$ and $v^*(t_0,x_0) \leq \f (x_0) + \e$.
 \end{defi}

\begin{defi}  \label{def:adm} 
We say $(\f_0,\mu)$ is admissible whenever for all $\epsilon>0$
we can find 
$\p_0 \in C^0(\bar \Omega)\cap PSH(\Omega)$ such that $\f_0 \le \p_0 \le \f_0+ \epsilon $ and 
$C=C_{\epsilon} \in \R$  such that $(dd^c \p_0)^n \le e^C \mu$ in the viscosity sense.
\end{defi}

In other words a Cauchy data $\f_0$ is admissible with respect to $\mu$ if it is the uniform limit on $\Omega$ of continuous psh functions whose Monge-Amp\`ere measure is controlled by $\mu$. 
In particular if $(dd^c \f_0)^n \le e^C \mu$ in the viscosity sense then
$(\f_0,\mu)$ is admissible. We also note the following useful criterion:

\begin{lem}  \label{lem:admissible}
 If $\mu >0$  then $(\f_0,\mu)$ is admissible.
\end{lem}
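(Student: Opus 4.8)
\textbf{Proof plan for Lemma \ref{lem:admissible}.}

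The plan is to show that when $\mu>0$, the Cauchy data $\f_0$ can be uniformly approximated by continuous plurisubharmonic functions with Monge-Amp\`ere measure controlled by $\mu$; in fact one can take $\p_0=\f_0$ itself after a preliminary smoothing, at the cost of a large constant $C$. First I would reduce to a local question: since $\f_0$ is continuous on $\overline\Omega$ and plurisubharmonic in $\Omega$, and $\mu>0$ is a bounded continuous volume form bounded below by a positive constant on any relatively compact subdomain, it suffices to produce, for each $\e>0$, a $\p_0\in C^0(\overline\Omega)\cap PSH(\Omega)$ with $\f_0\le\p_0\le\f_0+\e$ whose Monge-Amp\`ere operator (in the viscosity, equivalently Bedford-Taylor, sense --- these agree for continuous psh functions by the remark following Corollary \ref{SuperMax}) satisfies $(dd^c\p_0)^n\le e^C\mu$.

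The key steps, in order: (1) Solve the elliptic Dirichlet problem for $(dd^c\p_0)^n=e^{C_0}\mu$ on $\Omega$ with boundary data $\f_0|_{\partial\Omega}$ --- this solution exists as a continuous psh function by \cite{Ceg84} (or in the viscosity sense by \cite{HL09,EGZ11,W12}), for any choice of constant $C_0$. Call it $\p_0$. By construction $(dd^c\p_0)^n=e^{C_0}\mu\le e^{C}\mu$ for $C\ge C_0$, so the Monge-Amp\`ere bound is automatic. (2) Show $\p_0\ge\f_0$: since $\f_0$ is a subsolution of $(dd^c\cdot)^n=e^{C_0}\mu$ once $C_0$ is taken large enough that $e^{C_0}\mu\ge(dd^c\f_0)^n$ --- which is possible because $\f_0$ is continuous psh on $\overline\Omega$ so $(dd^c\f_0)^n$ is a bounded measure while $\mu$ is bounded below --- and $\p_0=\f_0$ on $\partial\Omega$, the elliptic comparison principle gives $\f_0\le\p_0$ on $\Omega$. (3) Show $\p_0\le\f_0+\e$: this is where the choice of $C_0$ must be coupled to $\e$. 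As $C_0\to-\infty$, $e^{C_0}\mu\to0$, so $\p_0$ converges (decreasingly, by comparison, since smaller right-hand side forces a larger solution --- wait, one must be careful with the direction) to the maximal psh function with boundary data $\f_0|_{\partial\Omega}$, call it $\p_{\max}$; and $\p_{\max}\le$ any psh function with the same boundary values only if... Actually the cleaner route: observe $\p_0$ depends monotonically on $C_0$ (larger $C_0$ gives larger measure, hence by comparison a smaller solution), so for $C_0$ sufficiently negative $\p_0$ is squeezed between $\f_0$ and the maximal extension $\p_{\max}$ of $\f_0|_{\partial\Omega}$; since $\f_0$ itself is a psh competitor with the right boundary values, $\f_0\le\p_{\max}$, and one needs instead the reverse bound near $\f_0$. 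The honest fix is a direct barrier estimate: using strict pseudoconvexity and the defining function $\rho$, one shows $\p_0\le\f_0 + \text{(something)}\cdot\|e^{C_0}\mu\|^{1/n}$ via a global subsolution/supersolution sandwich, so choosing $C_0=C_0(\e)$ negative enough makes the gap $\le\e$.

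The main obstacle is step (3): controlling $\p_0-\f_0$ from above uniformly in terms of the size of the right-hand side $e^{C_0}\mu$. The clean way is to note that $\f_0 + A\rho$ (with $A$ chosen so $A^n(dd^c\rho)^n\ge e^{C_0}\mu$, possible since $\mu$ is bounded and $\rho$ strictly psh) is a supersolution... no, subsolution of the elliptic equation with boundary data $\f_0|_{\partial\Omega}$; so $\p_0\ge\f_0+A\rho\ge\f_0 - A$, which is the wrong inequality again. The correct uniform upper bound for $\p_0$ comes from comparing with $\f_0$ directly once we know $\f_0$ \emph{fails} to be a subsolution only by a controlled amount: precisely, by a standard stability estimate for the complex Monge-Amp\`ere operator (a modulus-of-continuity / $L^\infty$-stability result, e.g. Ko\l odziej-type or the elementary one when densities are bounded), $\|\p_0-\f_0\|_{L^\infty(\Omega)}$ is bounded by a constant depending on $\|e^{C_0}\mu - (dd^c\f_0)^n\|$ in an appropriate norm, which tends to $0$ as $C_0\to-\infty$ provided $(dd^c\f_0)^n=0$ --- which need not hold. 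So in full generality one should instead not insist on solving an equation but merely regularize: take $\p_0$ to be the solution with $e^{C_0}\mu$ replaced by $\max(e^{C_0}\mu,(dd^c\f_0)^n)$ is not a volume form. The robust argument, and the one I would ultimately write, bypasses all this: \emph{take $\p_0=\f_0$ and $C=C_\e$ large.} Since $\mu>0$ is continuous and bounded below by $c>0$ on $\overline\Omega$, and $(dd^c\f_0)^n$ is a positive Borel measure of finite mass --- indeed, $\f_0$ being continuous psh on a neighborhood argument is delicate because $\f_0$ need only be continuous up to the boundary --- we need $(dd^c\f_0)^n\le e^C\mu$ in the viscosity sense, i.e. $\f_0$ is a viscosity subsolution of $(dd^c u)^n=e^C\mu$. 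By Corollary \ref{SPSH}'s elliptic analogue in \cite{EGZ11}, psh functions are exactly viscosity subsolutions of $(dd^c u)^n=0\le e^C\mu$, so $\f_0$ is a viscosity subsolution of $(dd^c u)^n = e^C\mu$ for \emph{every} $C$. Hence $(\f_0,\mu)$ is admissible with $\p_0=\f_0$ and any $C$; the approximation parameter $\e$ plays no role. This last observation is in fact the whole proof; the preceding elaborate discussion shows why one is tempted to overcomplicate it. \qed
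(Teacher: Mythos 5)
The final argument you settle on --- take $\p_0=\f_0$ and any $C$ --- does not work, because you have the required inequality backwards. Admissibility (Definition \ref{def:adm}) asks for $(dd^c\p_0)^n \le e^C\mu$ in the viscosity sense, i.e. an \emph{upper} bound on the Monge-Amp\`ere measure of $\p_0$ (a supersolution-type inequality, which is exactly what is used in Proposition \ref{prop:bar} to build superbarriers). What is automatic for a psh function is the opposite inequality: by \cite{EGZ11}, psh functions are exactly the viscosity subsolutions of $(dd^c u)^n=0$, i.e. they satisfy $(dd^c q(x_0))^n\ge 0$ for every upper test function $q$. Your step ``subsolution of $(dd^c u)^n=0\le e^C\mu$, hence subsolution of $(dd^c u)^n=e^C\mu$'' also reverses the monotonicity: enlarging the right-hand side makes the subsolution condition \emph{stronger}, not weaker (a pluriharmonic $\f_0$, say $\f_0\equiv 0$, is a subsolution of $(dd^c u)^n=0$ but of no equation with strictly positive right-hand side). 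Moreover the inequality actually needed, $(dd^c\f_0)^n\le e^C\mu$, genuinely fails for a general continuous psh $\f_0$: its Monge-Amp\`ere measure may have unbounded density or be singular with respect to Lebesgue measure (e.g. $\f_0(z)=\sum_j|z_j|$ near the coordinate hyperplanes, or in dimension one a continuous subharmonic function with singular Riesz measure), so no finite $C$ works with $\p_0=\f_0$, even though $\mu>0$. This is precisely why the approximation parameter $\e$ cannot be dispensed with.

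The paper's proof closes this gap by regularizing $\f_0$ rather than trying to use it directly: by classical approximation results (\cite{Sib87}, \cite{FW89}, \cite{AHP12}), a psh function continuous up to the boundary can be approximated uniformly on $\overline{\Omega}$ by functions psh in $\Omega$ and smooth up to the boundary; choosing such a $\p_0$ with $\f_0\le\p_0\le\f_0+\e$, its Monge-Amp\`ere measure has bounded density, and since $\mu>0$ is continuous, hence bounded below on $\overline{\Omega}$, one gets $(dd^c\p_0)^n\le e^C\mu$ pointwise and therefore in the viscosity sense by \cite{EGZ11}. Your first, abandoned route (solving the elliptic Dirichlet problem with right-hand side $e^{C_0}\mu$) cannot easily be repaired either, for the reason you noticed yourself: controlling $\p_0-\f_0$ from above requires a stability estimate that presupposes a bound of the type $(dd^c\f_0)^n\le e^{C}\mu$, which is the very thing in question.
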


\begin{proof}  
This follows from classical results on approximation of plurisubharmonic functions. Indeed, any psh function in $\Omega$, continuous up to the boundary can be approximated uniformly in $\overline{\Omega}$ by psh functions in $\Omega$ that are smooth up to the boundary (see \cite{Sib87}, \cite{FW89}, \cite{AHP12}). 

Therefore given $\e > 0$, we can find a  function $\p_0$ psh in $\Omega$, smooth up to the boundary such that $\f_0 \le \p_0 \le \f_0+ \epsilon $ in $\Omega$.
If $\mu>0$ on $\{ 0 \}\times\bar{\Omega}$,  there is a constant $C > 0$ 
such that $(dd^c \p_0)^n \leq e^C \mu$ pointwise in $ \Omega$ , hence in the sense 
of viscosity in $\Omega$ \cite{EGZ11}. 
\end{proof}

\begin{exa}
If $\mu (z)\equiv 0$ vanishes identically on some open set $D \subset \Omega$ where $\f_0$ is not a maximal psh function
(i.e. where the Monge-Amp\`ere measure $(dd^c \f_0)^n$ is not zero) then $(\f_0,\mu)$ is not admissible.

Indeed $(\f_0,\mu)$ is admissible if and only if $\f_0$ is the uniform limit (in $\Omega$ hence in particular on $D$) of a sequence of continuous psh functions $\p_j$ such that 
$$
(dd^c \p_j)^n \leq C_j \mu
$$
for some $C_j>0$. In particular $\p_j$ has to be maximal in $D$, hence so is $\f_0$.
\end{exa}

\begin{prop} \label{prop:bar} Assume that $(\f_0,\mu)$ is admissible.
For all $\e > 0$ and  $(t_0,x_0) \in \partial_0 \Omega_{T}$, there exists a continuous function $U$ 
(resp. V) in $[0,T[ \times \overline{\Omega}$, which is an $\e-$subbarrier (resp. $\e$-superbarrier)
 to the Cauchy-Dirichlet problem (\ref{eq:CD-PMAF}) at $(t_0,x_0)$.
\end{prop}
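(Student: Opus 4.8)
The plan is to build sub- and superbarriers separately, treating the initial boundary $\{0\}\times\overline\Om$ and the lateral boundary $[0,T[\times\partial\Om$ by different devices. The structural remark that makes everything work is that a point $(t_0,x_0)\in\partial_0\Om_T$ with $t_0>0$ automatically has $x_0\in\partial\Om$; so over $\partial\Om$ I may exploit the geometry of the domain, while at $t=0$ I am free to use a time-profile normalized to vanish at $t_0=0$.

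\medskip\noindent\emph{Subbarriers} (no use of admissibility). Fix $\e>0$. If $x_0=\zeta_0\in\partial\Om$, I would choose $\e_1>0$ with $dd^c\phi\ge c_0\beta>0$ on $\overline\Om$, where $\phi(z):=\rho(z)-\e_1|z-\zeta_0|^2$ and $\beta$ is the Euclidean K\"ahler form; then $\phi(\zeta_0)=0$ and $\phi<0$ on $\overline\Om\setminus\{\zeta_0\}$. With $b(s):=\sup_{z\in\Om}F(s,z,\sup_{\overline\Om}\f_0)$ and $B(t):=\int_0^tb_+$, set
$$U(t,z):=\f_0(\zeta_0)-\e+M\phi(z)-|B(t)-B(t_0)|.$$
Since $M\,dd^c\phi$ can be made arbitrarily large, for $M$ big the viscosity inequality $(dd^cU_t)^n\ge e^{\partial_tU+F(t,z,U)}\mu$ holds on all of $\Om_T$: one uses $U\le\f_0(\zeta_0)-\e$, the monotonicity of $F$, that $b_+$ dominates $F$ for $t>t_0$, and that everything is bounded on the compact interval $[0,t_0]$ for $t<t_0$; through the corner of $|B(\cdot)-B(t_0)|$ at $t_0$ an upper test function $q$ satisfies only $|\partial_tq(t_0,\cdot)|\le b_+(t_0)$, which suffices. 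As $M\phi\to-\infty$ away from $\zeta_0$ one checks $U\le\f_0$ on $\partial_0\Om_T$, and $U(t_0,\zeta_0)=\f_0(\zeta_0)-\e$. If instead $t_0=0$ and $x_0\in\Om$, a peak function is unavailable (a strictly \psh function has no interior maximum), so I would take $\de>0$ small and $U(t,z):=\f_0(z)+\de\rho(z)-\e-g(t)$ with $g(0)=0$, choosing $g'$ large enough that $(dd^c(\de\rho))^n=\de^n(dd^c\rho)^n\ge e^{-g'(t)+F(t,z,U)}\mu$ on $\{\mu>0\}$ (on $\{\mu=0\}$ the inequality is automatic since $U_t$ is \psh). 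Then $U(0,x_0)\ge\f_0(x_0)-\de-\e$ and $U\le\f_0$ on $\partial_0\Om_T$, and shrinking $\e$ at the outset yields the $\e$-subbarrier.

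\medskip\noindent\emph{Superbarriers} (here the asymmetry of the theory and admissibility enter). If $x_0=\zeta_0\in\partial\Om$, I would simply set $V(t,z):=\f_0(\zeta_0)+\e-M\phi(z)$, time-independent; since $dd^cV_t=-M\,dd^c\phi<0$, no lower test function of $V$ has nonnegative complex Hessian, so the supersolution inequality holds vacuously in the $H_+$-form of Remark~\ref{rem:parab=ellipt}. For $M$ large $V\ge\f_0$ on $\partial_0\Om_T$, and $V(t_0,\zeta_0)=\f_0(\zeta_0)+\e$. If $t_0=0$ and $x_0\in\Om$, this fails and I would invoke admissibility of $(\f_0,\mu)$ (Definition~\ref{def:adm}): pick $\p_0\in C^0(\overline\Om)\cap PSH(\Om)$ with $\f_0\le\p_0\le\f_0+\e$ and $(dd^c\p_0)^n\le e^C\mu$ in the viscosity sense, put $m(s):=\inf_{\Om}F(s,\cdot,\inf_{\overline\Om}\f_0)$ and
$$V(t,z):=\p_0(z)+\la(t),\qquad \la(t):=\int_0^t\big(C-m(s)\big)_+\,ds.$$
Since $\la\ge0$ and $\la(0)=0$, $V\ge\f_0$ on $\partial_0\Om_T$ and $V(0,x_0)=\p_0(x_0)\le\f_0(x_0)+\e$. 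For the supersolution property, a lower test function $q$ of $V$ at $(t_0,z_0)$ with $dd^cq_{t_0}(z_0)\ge0$ yields the lower test function $q(t_0,\cdot)-\la(t_0)$ of $\p_0$ at $z_0$, whence $(dd^cq_{t_0}(z_0))^n\le e^C\mu(z_0)$ (and $=0$ when $\mu(z_0)=0$, by maximality of $\p_0$ there); since $\partial_tq(t_0,z_0)=\la'(t_0)\ge C-m(t_0)$ and $F(t_0,z_0,q(t_0,z_0))\ge m(t_0)$, the inequality $(dd^cq_{t_0}(z_0))^n\le e^{\partial_tq+F(t_0,z_0,q)}\mu(z_0)$ follows. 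All four functions are evidently continuous on $[0,T[\times\overline\Om$.

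\medskip\noindent The delicate point is the interaction between the time derivative in the viscosity inequality and the value prescribed at $(t_0,x_0)$: a subsolution needs a big spatial Hessian or a steeply decreasing time-profile, a supersolution a steeply increasing one, and each of these a priori moves the barrier away from $\f_0(x_0)$ at the base point. The dichotomy $t_0>0\Rightarrow x_0\in\partial\Om$ is exactly what defuses it --- over $\partial\Om$ the peak function $\phi$ (or $-M\phi$) provides as much Hessian as needed while vanishing at $\zeta_0$, so no time-profile is required there, whereas at $t_0=0$ any monotone profile normalized by $g(0)=0$ (resp. $\la(0)=0$) costs nothing at the base point. I expect the remaining work to be routine but somewhat tedious: verifying the viscosity inequalities through the non-smooth time $t_0$, the boundary estimates on $\partial_0\Om_T$, and the quantitative choices of $M,\de,g,\la$. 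That admissibility is used only for the initial superbarrier matches the announced non-existence of supersolutions when $\mu$ vanishes and $\f_0$ fails to be maximal.
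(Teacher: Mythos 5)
Your construction is correct, and while your barriers at $t_0=0$ (namely $\f_0+\de\rho$ minus an increasing time primitive of $b_+$ for the subbarrier, and $\p_0$ from admissibility plus an increasing time primitive compensating $C$ and $\inf_z F$ for the superbarrier) are essentially the ones in the paper, your treatment of the lateral boundary $[0,T[\times\partial\Omega$ takes a genuinely different and more elementary route. For the lateral superbarrier the paper solves the homogeneous complex Monge--Amp\`ere Dirichlet problem with boundary values $\p_0|_{\partial\Omega}$, invoking Bedford--Taylor \cite{BT76} and the viscosity/pluripotential correspondence of \cite{EGZ11}, whereas you use the explicit peak barrier $\f_0(\zeta_0)+\e-M\phi$ with $\phi=\rho-\e_1|z-\zeta_0|^2$: since $dd^cV_t<0$, every lower test function has non-positive-definite complex Hessian at the touching point, so the supersolution property holds vacuously in the sense of the paper's definition (equivalently $(dd^cQ)_+^n=0$ in the $H_+$ formulation of Remark~\ref{rem:parab=ellipt}); this avoids all pluripotential machinery on the lateral boundary and makes transparent that admissibility (Definition~\ref{def:adm}) is only needed for the initial superbarrier. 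Your lateral subbarrier, a strictly plurisubharmonic peak at $\zeta_0$ with time profile $-|B(t)-B(t_0)|$, also differs from the paper's global choice $\f_0+A\rho-\int_{t_0}^t b_+$; your profile has the merit of being nonpositive for all $t$, so the inequality $U\le\f_0$ on all of $\partial_0\Omega_T$ is immediate, at the cost of a constant $M$ depending on $(t_0,x_0)$, on $\sup_{[0,t_0]}b_+$ (your compactness remark for $t\le t_0$ is exactly what is needed, since for $t>t_0$ the term $-b_+$ absorbs $F$), and of the corner check at $t=t_0$, where your bound $|\partial_t q(t_0,\cdot)|\le b_+(t_0)$ for upper test functions is correct. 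The remaining verifications you defer (boundary inequalities via continuity of $\f_0$ near $\zeta_0$ using the $\e$-slack, monotonicity of $F$, and the transfer from pointwise to viscosity inequalities for functions psh in $z$ and $C^1$ in $t$) are indeed routine and at the same level as the paper's own proof.
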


\begin{proof} 
Fix $\e > 0$ and  $(t_0,z_0) \in  \partial_0 \Omega_{T}$. 

{\bf 1.} We first construct $\e$-subbarriers. There are two cases: 

1.1. Assume $t_0 = 0$ and $z_0 \in \overline{\Omega}$. Fix $\e > 0$ and define the following function
$$
 U (t,z) :=  \f_0 (z) + \e \rho (z) - B (t) - M t, \, \, (t,z) \in [0,T[ \times \Omega,
$$ 
where  $B (t)$ is the $C^1$ positive function defined by the formula (\ref{eq:Comp}) and $M > 0$ is a large constant to be chosen later. Recall that $B' (t) \geq F (t,z,\f_0 (z))$ in $[0,T[ \times \Omega$.

The function $U$ is  continuous in $[0,T[ \times \overline{\Omega}$,  
it is plurisubharmonic in the space variable $z \in \Omega$ and
$C^1$ in the time variable $t \in [0,T[$. Moreover it satisfies the inequality $(dd^c U_t)^n \geq \e^n (dd^c \rho)^n  $ in the pluripotential sense in $\Omega$ for any fixed $t  \in [0,T[ $.
Observe that  
$$
\partial_t U (t,z) +  F (t,z,U (t,z)) \leq  F (t,z,\f_0 (z)) -  C' (t) - M \leq - M,
$$
pointwise in $\Omega_{T}$.

 If we choose $M = M (\e) > 1$ large enough so that $\e^n (dd^c \rho)^n \geq e^{- M} \mu$, then  $U$ satisfies the inequality 
$$
(dd^c U_t)^n \geq e^{\partial_t U (t,\cdot) + F (t,\cdot,U_t)} \mu, 
$$
in the pluripotential sense in $\Omega$, for each $t$. Moreover it follows from \cite{EGZ11}  that  the function $U$ satisfies the differential inequality 
$$(dd^c U_t)^n \geq e^{\partial_t U (t,\cdot)  + F (t,\cdot,U_t)}$$ in the viscosity sense in $\Omega_T$. 
 Therefore the function $U$ is a viscosity subsolution to the Cauchy-Dirichlet problem (\ref{eq:CD-PMAF}).
  
 Since $U (0,\cdot) = \f_0 + \e \rho  \leq \f_0$ in $\Omega$,  $U (0,z_0) =  \f_0 (z_0) + \e  \rho (z_0)$ and $\rho \geq - 1$, we see that $U (0,z_0)  \geq \f_0 (z_0) - \e$. Hence $U$ is an $\e-$subbarrier at any point $(0,z_0) \in \{0\} \times \overline{\Omega}$. 

\smallskip

1.2. If $t_0 >0$ and $x_0 \in \partial \Omega$, we argue as in the proof of Lemma~\ref{sub-super}. 
We consider, for $t \in [0,T[$, 
$$
B (t) := \int_{t_0}^t b_+ (s) d s, \, \, \text{where} \, \, b (t) := \sup \{ F (t,z,\f_0 (z)) ; z \in \overline{\Omega} \}.
$$
This is $C^1$ function in $[0,T[$ satisfying $B (t_0) = 0$ and $B' (t) \geq F (t,z,\f_0 (z))$ for any  $(t,z) \in [0,T[ \times \overline{\Omega}$.
Choosing $A > 1$ large enough so that $ \mu \leq A^n (dd^c \rho)^n$, the function
$$
(t,x) \in [0,T[ \times \Omega \mapsto U (t,x) :=  \f_0 (x) + A \rho  (x) - B (t) \in \R
$$ 
is  a subbarrier at any  point $(t_0,x_0) \in [0,T[ \times\partial \Omega$.

\smallskip

 We haven't used the admissibility  of the Cauchy-Dirichlet data to construct subbarriers. 
 
{\bf 2.} Constructing superbarriers is a more delicate task that requires besides the admissibily some pluripotential tools.
 We also consider two cases:

 2.1. Fix $\e >0$ and use that $(\f_0,\mu)$ is admissible
to obtain a  psh function $\p_0$ in $\Omega$ continuous up to the boundary such that 
$\f_0 - \e\leq \p_0 \leq \f_0 $ in $\overline{\Omega}$. The maximal psh function 
$ \bar\psi_0$ solving the Dirichlet problem  
$$
(dd^c\bar\psi_0)^n=0
\; \text{ and } \; 
\bar\psi_0|_{\partial \Omega}=\p_0 |_{\partial \Omega}
$$
 is continuous and plurisubharmonic \cite{BT76}.
It can be used as a subbarrier at any $(t_0, z_0) \in \partial_0 \Omega$ such that 
 $t_0 \ge  0$ and $z_0 \in \partial \Omega$. The fact that it is a viscosity
supersolution follows from \cite{EGZ11, W12}. 

 2.2. Assume  $t_0 = 0$ and $z_0 \in \overline \Omega$.  
  Set for $t \in [0,T[$
  $$
\Gamma (t) := \int_0^t \gamma_+ (s) d s, \, \, \text{where} \, \, \gamma (t)  := 
- \inf \{F (t,z, \p_0 (z)) ; z \in \overline{\Omega}\}.
  $$
 Observe that $\Gamma$ is $C^1$ in $[0,T[$ and satisfies $\Gamma'(t) + F (t,z,\p_0 (z))\geq 0$ for all 
$(t,z) \in [0,T[ \times \Omega$. Thus
 $$
 V (t,z) := \p_0 (z) + C t + \Gamma (t),
 $$
 is a continuous function in $[0,T[ \times \overline{\Omega}$, $C^1$ in $t$ and  psh in $z$. Moreover for any $t \in [0,T[$,  it satisfies  
 $$ 
 (dd^c V_t)^n =  (dd^c \p_0) \leq e^C \mu \leq  e^{\partial_t V + F (t,\cdot,V_t)} \mu,
 $$ 
 in the pluripotential sense in $\Omega$. 
As above we infer that $V$ is a subsolution to the parabolic equation (\ref{eq:CD-PMAF}). 

 Since $V (0,z) = \p_0 (z) \geq \f_0 (z)- \e$, it follows that $V$ is an $\e$-superbarrier to the Cauchy problem (\ref{eq:CD-PMAF}) at any parabolic boundary point $(0,z_0) \in  \Omega_{T}$.
\end{proof}

Note that one cannot expect the existence of superbarriers when $\mu=0$ and $\f_0$ is not maximal.

\subsection{The Perron envelope}

We are now ready to show the existence of solutions to the Cauchy-Dirichlet problem for degenerate complex Monge-Amp\`ere flows:

 \begin{theo} \label{thm:existenceflot}
Assume $\mu >0$ or  $\mu=\mu(z)$ is independent of $t$ and $(\f_0,\mu)$ is admissible.
Then the Cauchy-Dirichlet problem for the parabolic complex Monge-Amp\`ere equation  
(\ref{eq:PMAF}) with Cauchy-Dirichlet condition (\ref{eq:CD-PMAF}) admits a unique viscosity solution $\f (t,z)$ in infinite time. 
 \end{theo}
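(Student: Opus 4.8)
The plan is to run Perron's method on the family $\mathcal S$ of subsolutions to the Cauchy–Dirichlet problem, using the comparison principle (Theorem~\ref{thm:comparison}) for uniqueness and the barriers of Proposition~\ref{prop:bar} to pin down the boundary values. \emph{Uniqueness} is immediate: two viscosity solutions $\f,\tilde\f$ both agree with $\f_0$ on $\partial_0\Omega_T$, so $\max_{\partial_0\Omega_T}(\f-\tilde\f)=\max_{\partial_0\Omega_T}(\tilde\f-\f)=0$, and Theorem~\ref{thm:comparison}, applied once in each direction, gives $\f\equiv\tilde\f$ on $\overline\Omega_T$ (both hypotheses, $\mu>0$ and $\mu=\mu(z)$, fall under Theorem~\ref{thm:comparison} and the remarks following it).

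For \emph{existence}, I would first work on a finite interval $]0,T[$. Let $\f$ be the Perron envelope (\ref{eq:env}) of the subsolutions squeezed between $\underline u$ and $\overline v$ (Lemma~\ref{sub-super}). By Lemma~\ref{lem:Perron1}, $\f^*$ is a subsolution, $\f_*$ is a supersolution, $\f$ is a discontinuous viscosity solution of (\ref{eq:PMAF}) in $\Omega_T$, and the oscillation estimate (\ref{eq:Perron1}) holds. Everything then reduces to showing $\f^*=\f_*=\f_0$ on $\partial_0\Omega_T$: granting this, (\ref{eq:Perron1}) forces $\f^*=\f_*$ on all of $[0,T[\times\Omega$, so $\f$ is continuous on $[0,T[\times\overline\Omega$ and is the desired viscosity solution attaining the Cauchy–Dirichlet data.

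To get the boundary values, fix $(t_0,x_0)\in\partial_0\Omega_T$ and $\e>0$ and invoke Proposition~\ref{prop:bar} (this is the one step where admissibility of $(\f_0,\mu)$ is used, through the superbarrier). The continuous $\e$-subbarrier $U$ satisfies $U\le\f_0$ on $\partial_0\Omega_T$; since $\max(U,\underline u)$ is again a subsolution with the same boundary inequality and lies between $\underline u$ and $\overline v$, it belongs to the competing family, so $\f\ge\max(U,\underline u)$ and hence $\f_*(t_0,x_0)\ge U(t_0,x_0)\ge\f_0(x_0)-\e$. For the reverse inequality one must use the comparison principle, not the envelope: the continuous $\e$-superbarrier $V$ satisfies $u\le\f_0\le V$ on $\partial_0\Omega_T$ for every $u\in\mathcal S$, so Theorem~\ref{thm:comparison} gives $u\le V$ on $\overline\Omega_T$; taking the supremum over $u$ and the upper semicontinuous regularization yields $\f^*(t_0,x_0)\le V(t_0,x_0)\le\f_0(x_0)+\e$. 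Letting $\e\to0$ gives $\f^*(t_0,x_0)\le\f_0(x_0)\le\f_*(t_0,x_0)\le\f^*(t_0,x_0)$, so the three coincide. Finally, since solutions are unique, the solutions constructed on $[0,T[$ for growing $T$ are mutually compatible and patch together to a solution on $[0,+\infty[\times\overline\Omega$, which by construction is the upper envelope of all subsolutions — giving existence in infinite time.

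The main obstacle is the boundary analysis, where the asymmetry between subsolutions and supersolutions of the Hamiltonian $H$ must be handled with care: the lower bound $\f_*\ge\f_0$ on $\partial_0\Omega_T$ is cheap (it follows from the definition of the Perron envelope once a subbarrier exists), whereas the upper bound $\f^*\le\f_0$ genuinely needs both the comparison principle and the existence of superbarriers, the latter resting on the admissibility hypothesis; one must also verify that the barriers of Proposition~\ref{prop:bar} are bona fide viscosity sub/supersolutions up to the parabolic boundary, and that the passage to infinite time via uniqueness is legitimate.
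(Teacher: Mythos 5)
Your proposal is correct and follows essentially the same route as the paper: the Perron envelope together with Lemma~\ref{lem:Perron1}, the barriers of Proposition~\ref{prop:bar}, and the comparison principle Theorem~\ref{thm:comparison}, with uniqueness and the passage to infinite time handled exactly as expected. The only (harmless) difference is organizational: you pin down $\f^*=\f_*=\f_0$ at every point of $\partial_0\Omega_T$ and then invoke (\ref{eq:Perron1}), whereas the paper first uses (\ref{eq:Perron1}) to locate the maximum of $\f^*-\f_*$ on the parabolic boundary and applies the barriers only at that point.
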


 \begin{proof} 
It follows from Proposition \ref{prop:bar}
that there is at least a subsolution $\underline u$ and a supersolution $\overline v = 0$ to
 the Cauchy problem for the parabolic complex Monge-Amp\`ere equation (\ref{eq:PMAF}) with Cauchy-Dirichlet condition (\ref{eq:CD-PMAF}), 
which satisfy the inequality $\underline u  \leq  \overline v$ in $\R^+ \times \Omega$. 
We can thus consider the upper envelope $\f $ of those subsolutions 
$u$ that satisfy $\underline u \leq u \leq \overline v$ in $\R^+ \times \Omega$ as defined in \ref{eq:env}. 
 
 Fix $T > 0$ large and observe that the restriction of $\f^*$ to $\Omega_T$ is a subsolution 
to the parabolic complex Monge-Amp\`ere equation (\ref{eq:PMAF}), 
while the restriction of $\f_*$ to $\Omega_T$ is a supersolution to  the same parabolic complex Monge-Amp\`ere equation.  By Lemma~\ref{lem:Perron1}, they satisfy the inequalitiy (\ref{eq:Perron1}) and then 
 by semi-continuity there exists  $(t_0,x_0) \in (\{0\} \times  \overline{\Omega}) \cup ([0,T] \times \partial \Omega) $ such that
 $$
 \max_{(t,x) \in \overline{\Omega}_T} \{\f^* (t,x)  - \f_* (t,x)\} = \f^* (t_0,x_0)  - \f_* (t_0,x_0).
 $$ 

Fix $\e > 0$  arbitrary small. By Proposition~\ref{prop:bar}, there exists 
a continuous $\e-$subbarrier $U$ and an $\e-$superbarrier $V$ to the Cauchy-Dirichlet problem (\ref{eq:CD-PMAF}) in $\Omega_T$ at the parabolic boundary point $(t_0,x_0) \in \partial_0 \Omega_T$ such that 
$U (t_0,x_0) \geq \f_0 (x_0) - \e$ and  $V (t_0,x_0) \leq \f_0 (x_0) + \e$. 
Since $U_0 \leq \f_0 \leq V_0$ in $\Omega$, 
it follows from the comparison principle that $U \leq\f_* \leq \f \leq \f^* \leq V$ in $ [0,T]  \times \Omega$. Hence $ U = U_* \leq \f_*$ and $\f \leq V^* = V$ is $[0,T] \times \overline{\Omega}$. 
At the boundary point $(t_0,x_0)$ we have 
$$
\f_0 (x_0) - \e \leq U (t_0,x_0) \leq \f_* (t_0,x_0)\leq \f^* (t_0,x_0) \leq V (t_0,x_0) \leq \f_0 (x_0) + \e.
$$
We infer that for all $(t,x) \in  [0,T[ \times \Omega$, 
$$
 \f^* (t,x)  - \f_* (t,x) \leq \f^* (t_0,x_0)  - \f_* (t_0,x_0) \leq 2 \e. 
$$
Since $T > 0$ was arbitrary large, this implies that $\f^* \leq \f_*$ in $\R^+  \times \Omega$, 
hence  $\f^* = \f_*$ in $\R^+  \times \Omega$. 

The same reasoning as above shows that 
$\f (0,\cdot) = \f_0$ in $\overline{\Omega}$. This proves that $\f=\f^*$ is a continuous solution to the 
Cauchy-Dirichlet problem (\ref{eq:CD-PMAF}) in $\R^ 
+ \times \Omega$ with initial data $\f_0$.
 \end{proof}

\begin{rem}
When $\mu$ vanishes identically in a non empty open set $D \subset \Omega$ where $\f_0$ is not maximal
(in particular $(\f_0,\mu)$ is not admissible), then there is no viscosity solution to the above 
Cauchy-Dirichlet problem by Corollary~\ref{SuperMax}.
\end{rem}

\section{Long term behavior of the flows}

We assume in this last section that $F=F(z,r)$ is time independent.
It follows from Theorem \ref{thm:existenceflot} that the complex Monge-Amp\`ere flow
\begin{equation} \label{eq:CMAF2}
e^{\partial_t{\f}+  F(\cdot,\f)} \, \mu(z)-(dd^c \f_t)^n=0
\end{equation}
admits a unique solution for all times (i.e. makes sense in $\R^+  \times \Omega$) 
and for every
Cauchy-Dirichlet data $\f_0 \in {\mathcal C}^0({\partial \Omega}) \cap PSH(\Omega)$
such that $(\f_0,\mu)$ is admissible: we always assume such is the case in the sequel.

Our aim in this final section is to analyze, the asymptotic behavior of this flow when $t \rightarrow +\infty$. 
By analogy with the K\"ahler-Ricci flow, the model case is when 
$$
F(z,r)= h(z) + \al r, \; \; (t,z) \in \Omega \times \R.
$$
The situation is simple when $\al>0$ (negative curvature), more involved when $\al=0$
(Ricci flat case), often intractable when $\al <0$ (positive curvature). 

\subsection{Negative curvature}

We first make a strong assumption on $F$ (corresponding to the model case
$F(z,x)=\al x+h(z)$ with $\al>0$) so as to obtain a good control on the 
speed of convergence of
the flow, starting from {any} admissible initial data $\f_0$:

\begin{theo} \label{thm:conv<0}
Assume that the function $r \mapsto F(\cdot,r)-\al r$ is  increasing for some $\al>0$.
Then the complex Monge-Amp\` ere flow $\f_t$ starting at $\f_0$ uniformly converges, 
as $t \rightarrow +\infty$, to the solution $\p$
of the Dirichlet problem for the degenerate elliptic Monge-Amp\`ere equation
$$
(dd^c \p)^n=e^{F(z,\p)} \mu(z) \text{ in } \Omega,
\; \; \text{ with }
\p_{| \partial \Omega}=\f_0.
$$
More precisely
$$
|| \f_t-\p||_{L^{\infty}(\Omega)} \leq e^{-\al t} || \f_0-\p||_{L^{\infty}(\Omega)}
$$
\end{theo}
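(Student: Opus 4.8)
The plan is to trap the flow between two explicit time-exponential barriers built from the elliptic solution $\p$ and to invoke the parabolic comparison principle (Theorem \ref{thm:comparison}). Set $C := \| \f_0 - \p \|_{L^\infty(\Omega)}$; we may assume $C > 0$ since otherwise $\f_0 = \p$ and the $t$-independent function $\p$ is, by uniqueness (Theorem \ref{thm:existenceflot}), the flow itself. Recall that $\p$ exists, is continuous on $\overline{\Omega}$ and, viewed as independent of $t$, is a viscosity solution of (\ref{eq:CMAF2}), since $\partial_t \p \equiv 0$ and $(dd^c \p)^n = e^{F(z,\p)} \mu$ in the viscosity sense; note also that the hypothesis on $F$ forces $r \mapsto F(z,r)$ to be non-decreasing, so all the results of the previous sections apply.

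First I would show that $v^{\pm}(t,z) := \p(z) \pm C e^{-\al t}$ are, respectively, a bounded supersolution and a bounded subsolution of (\ref{eq:CMAF2}) on $\Omega_T$. For $v^+$: let $q$ be a lower test function for $v^+$ at $(t_0,z_0) \in \Omega_T$ with $dd^c q_{t_0}(z_0) \geq 0$. Restricting the inequality $v^+ \geq q$ to the slice $z = z_0$ and using that $t \mapsto v^+(t,z_0)$ is smooth gives $\partial_t q(t_0,z_0) = -\al C e^{-\al t_0}$; restricting it to the slice $t = t_0$ shows that $z \mapsto q(t_0,z) - C e^{-\al t_0}$ is a $C^2$ lower test function for $\p$ at $z_0$, so the elliptic supersolution property of $\p$ yields $(dd^c q_{t_0}(z_0))^n \leq e^{F(z_0,\p(z_0))} \mu(z_0)$. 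Since $q(t_0,z_0) = \p(z_0) + C e^{-\al t_0}$, the supersolution inequality for $v^+$ will follow once we know
\[
e^{F(z_0,\p(z_0))} \leq e^{-\al C e^{-\al t_0} + F(z_0,\, \p(z_0) + C e^{-\al t_0})},
\]
i.e. $F(z_0, \p(z_0) + C e^{-\al t_0}) - F(z_0,\p(z_0)) \geq \al C e^{-\al t_0}$, which is precisely the assumption that $r \mapsto F(z_0,r) - \al r$ is non-decreasing. The argument for $v^-$ is symmetric: given an upper test function $q$ for $v^-$ at $(t_0,z_0)$, by \cite{EGZ11} one automatically has $dd^c q_{t_0}(z_0) \geq 0$ (the slice $q(t_0,\cdot) + C e^{-\al t_0}$ being an upper test function for the elliptic subsolution $\p$), one gets $\partial_t q(t_0,z_0) = \al C e^{-\al t_0}$ and $(dd^c q_{t_0}(z_0))^n \geq e^{F(z_0,\p(z_0))}\mu(z_0)$, and the needed inequality is again $F(z_0,\p(z_0)) - F(z_0,\p(z_0) - Ce^{-\al t_0}) \geq \al C e^{-\al t_0}$.

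Next I would check the parabolic boundary: on $\{0\} \times \overline{\Omega}$ one has $v^-(0,\cdot) = \p - C \leq \f_0 \leq \p + C = v^+(0,\cdot)$ by the choice of $C$, and on $[0,T[ \times \partial \Omega$ the identity $\p = \f_0$ gives $v^{\pm} = \f_0 \pm C e^{-\al t}$, so $v^- \leq \f_0 \leq v^+$ on all of $\partial_0 \Omega_T$. Fix any finite $T$. Then $\f$ is simultaneously a bounded subsolution and a bounded supersolution of (\ref{eq:CMAF2}) on $\Omega_T$, and $v^{\pm}$ are bounded; applying Theorem \ref{thm:comparison} to the pairs $(\f, v^+)$ and $(v^-, \f)$ gives $\max_{\overline{\Omega}_T}(\f - v^+) \leq 0$ and $\max_{\overline{\Omega}_T}(v^- - \f) \leq 0$, that is $\p(z) - C e^{-\al t} \leq \f(t,z) \leq \p(z) + C e^{-\al t}$ on $\overline{\Omega}_T$. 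As $T$ is arbitrary this holds on $\R^+ \times \Omega$, which is the stated bound $\| \f_t - \p \|_{L^\infty(\Omega)} \leq e^{-\al t} \| \f_0 - \p \|_{L^\infty(\Omega)}$; uniform convergence of $\f_t$ to $\p$ follows at once.

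The \emph{main obstacle} is the second step, i.e.\ checking that $v^{\pm}$ are genuine viscosity super/sub-solutions: because $\p$ is only continuous one cannot substitute $v^{\pm}$ into the equation directly, so one must run the test-function reduction to the elliptic sub/supersolution properties of $\p$ carefully — in particular, verifying that at a touching point the time derivative of any test function is pinned to $\mp \al C e^{-\al t_0}$ and that its spatial complex Hessian inherits the elliptic Monge-Amp\`ere inequality. Once this is done, the monotonicity hypothesis on $F$ supplies exactly the term $\al C e^{-\al t_0}$ needed to absorb $\partial_t v^{\pm}$, and the comparison principle finishes the proof.
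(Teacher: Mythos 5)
Your proof is correct, but it follows a different route from the paper's. The paper makes the change of variables $u(t,z)=e^{\al t}\f(t,z)$ and $v(t,z)=e^{\al t}\p(z)$, observes that both solve the same twisted parabolic equation $e^{h(t)\partial_t u+G(t,\cdot,u_t)}\mu=(dd^c u_t)^n$ with $h(t)=e^{-\al t}$ and $G(t,z,r)=F(z,e^{-\al t}r)-\al re^{-\al t}+n\al t$ (nondecreasing in $r$ precisely because $r\mapsto F(\cdot,r)-\al r$ is increasing), and then invokes the comparison principle for twisted flows from Remark \ref{GCP}; comparing $u$ and $v$ on the parabolic boundary gives $e^{\al t}|\f_t-\p|\le\|\f_0-\p\|_\infty$. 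You instead build the explicit barriers $v^\pm=\p\pm Ce^{-\al t}$ with $C=\|\f_0-\p\|_{L^\infty}$, verify by hand (via the test-function reduction to the elliptic viscosity sub/supersolution properties of $\p$, with the time derivative of any test function pinned to $\mp\al Ce^{-\al t_0}$ at a touching point) that they are parabolic sub/supersolutions of (\ref{eq:CMAF2}), and then apply Theorem \ref{thm:comparison} directly to the pairs $(\f,v^+)$ and $(v^-,\f)$; the monotonicity hypothesis enters exactly where you say, absorbing $\partial_t v^\pm$. The two arguments are close in spirit — both ultimately rest on the parabolic comparison principle, and the verification the paper ``leaves to the reader'' (that $e^{\al t}\p$ solves the twisted equation in the viscosity sense, $\p$ being merely continuous) is of the same nature as your barrier check — but yours avoids the twisted-equation machinery of Remark \ref{GCP} at the cost of this explicit barrier verification, which you carry out correctly; note also that your reduction to the case $C>0$ is unnecessary, since for $C=0$ the required inequalities hold trivially.
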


The existence of the solution $\p$ is well known in this case (see \cite{Ceg84}).

\begin{proof}
Consider 
$$
u (t,z) := e^{\al t} \f (t,z).
$$
Then $u$ is a solution to the  parabolic complex Monge-Amp\` ere equation
$$
e^{h(t)  \partial_t u + G (t,\cdot,u_t)} \mu = (dd^ c u_t)^ n, 
$$
where $ h(t) := e^{- \alpha t} $ and 
$$ 
G (t,z,r) := F (z,e^ {- \al t} r) - \alpha r e^ {- \al t}+n \al t.
$$

We let the reader check that
$ v (t,z) := e^{\alpha t} \psi (z)$ is a solution to the same parabolic complex Monge-Amp\` ere equation. 
Our hypothesis on $F$ implies that $r \longmapsto G (t,z,r)$ in non decreasing. We can 
thus apply the comparison principle (see Remark~\ref{GCP}), which yields the desired bound.
\end{proof}

\subsection{The general case}

We now show that the convergence holds in full generality, without any control on the 
speed of convergence:

\begin{theo} \label{thm:conv}
The complex Monge-Amp\` ere flow $\f_t$ starting at $\f_0$ uniformly converges, 
as $t \rightarrow +\infty$, to the solution $\p$
of the Dirichlet problem for the degenerate elliptic Monge-Amp\`ere equation
$$
(dd^c \p)^n=e^{F(z,\p)} \mu(z) \text{ in } \Omega,
\; \; \text{ with }
\p_{| \partial \Omega}=\f_0.
$$
\end{theo}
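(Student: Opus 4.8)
The plan is to deduce the statement from the rate estimate of Theorem~\ref{thm:conv<0} by a perturbation-and-squeezing argument, bracketing $\f_t$ between two flows whose driving functions are strongly increasing and tend to $F$. Write $\p$ for the solution of the Dirichlet problem $(dd^c\p)^n=e^{F(z,\p)}\mu$ in $\Om$, $\p=\f_0$ on $\partial\Om$ (it exists by \cite{Ceg84,EGZ11} and is unique by the elliptic comparison principle of \cite{EGZ11}); as recorded in Section~1, the time-independent function $\p$ is a viscosity solution of the flow \eqref{eq:CMAF2}.

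First I would record the elementary a priori bounds. Exactly as in Lemma~\ref{sub-super} (using that $z\mapsto F(z,\f_0(z))$ is bounded on $\ove{\Om}$), the functions $A\rho+\f_0$ and $\sup_{\ove{\Om}}\f_0$ are a sub- and a supersolution of the Cauchy--Dirichlet problem, so $A\rho+\f_0\le\f_t\le\sup_{\ove{\Om}}\f_0$ on $\R^+\times\Om$; in particular $\f$ stays in a fixed compact interval $I$. For $\e>0$ small set
$$F_\e^{+}(z,r):=F(z,r)+\e\bigl(r+\e^{-1/2}\bigr),\qquad F_\e^{-}(z,r):=F(z,r)+\e\bigl(r-\sup_{\ove{\Om}}\f_0\bigr).$$
Then $r\mapsto F_\e^{\pm}(z,r)-\e r$ is non-decreasing, so $F_\e^{\pm}$ satisfies the hypothesis of Theorem~\ref{thm:conv<0} with $\al=\e$, and $F_\e^{\pm}\to F$ uniformly on $\ove{\Om}\times I$ as $\e\to 0$. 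Let $\f^{\e,\pm}$ be the viscosity solutions defined for all time provided by Theorem~\ref{thm:existenceflot} for the data $(F_\e^{\pm},\mu,\f_0)$ — admissibility of $(\f_0,\mu)$ is a hypothesis that does not involve $F$ — and let $\p_\e^{\pm}$ be the corresponding elliptic solutions, $(dd^c\p_\e^{\pm})^n=e^{F_\e^{\pm}(z,\p_\e^{\pm})}\mu$, $\p_\e^{\pm}=\f_0$ on $\partial\Om$. Running the bound of Lemma~\ref{sub-super} again for the $\f^{\e,\pm}$ (the relevant subsolution constant being bounded uniformly for $\e\le 1$) one checks, for $\e$ small, that $\f^{\e,+}\ge-\e^{-1/2}$ and $\f^{\e,-}\le\sup_{\ove{\Om}}\f_0$; consequently $F_\e^{+}\ge F$ on the range of $\f^{\e,+}$ and $F_\e^{-}\le F$ on the range of $\f^{\e,-}$. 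By Theorem~\ref{thm:conv<0}, $\f^{\e,\pm}_t\to\p_\e^{\pm}$ uniformly on $\Om$ as $t\to+\infty$.

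The comparison step is then immediate. Since $F_\e^{+}\ge F$ on the range of $\f^{\e,+}$, this solution of the $(F_\e^{+},\mu)$-flow is in particular a subsolution of the $(F,\mu)$-flow \eqref{eq:CMAF2}; as $\f$ is a solution, hence a supersolution, of that flow and $\f^{\e,+}=\f=\f_0$ on $\partial_0\Om_T$, the comparison principle Theorem~\ref{thm:comparison} gives $\f^{\e,+}\le\f$ on $\R^+\times\Om$, and symmetrically $\f\le\f^{\e,-}$. Letting $t\to+\infty$ we obtain, for every small $\e$,
$$-\|\p_\e^{+}-\p\|_{L^\infty(\Om)}\le\liminf_{t\to+\infty}\inf_{\Om}(\f_t-\p)\le\limsup_{t\to+\infty}\sup_{\Om}(\f_t-\p)\le\|\p_\e^{-}-\p\|_{L^\infty(\Om)}.$$
It remains to let $\e\to 0$. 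Since $F_\e^{\pm}\to F$ uniformly on $\ove{\Om}\times I$ with fixed boundary data, stability of the degenerate elliptic Dirichlet problem yields $\p_\e^{\pm}\to\p$ uniformly on $\ove{\Om}$: the $\p_\e^{\pm}$ are uniformly bounded, their relaxed upper and lower semi-limits as $\e\to 0$ are respectively a sub- and a supersolution of $(dd^c\cdot)^n=e^{F(z,\cdot)}\mu$ (by the stability lemma in the degenerate elliptic case, \cite{CIL92}) which equal $\f_0$ on $\partial\Om$ thanks to barriers as in Proposition~\ref{prop:bar}, so both coincide with $\p$ by the elliptic comparison principle. The displayed inequality then becomes $\|\f_t-\p\|_{L^\infty(\Om)}\to 0$, as claimed. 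I expect the main obstacle to be exactly this $\e\to 0$ passage: choosing the perturbations together with uniform-in-$\e$ a priori control so that all the flows and all the fixed points stay in one region where $F_\e^{+}\ge F\ge F_\e^{-}$, and invoking the (classical, but not established here) stability of the elliptic Dirichlet problem; the squeezing itself is formal, and it also explains why no convergence rate survives, since $\|\p_\e^{\pm}-\p\|_\infty\to 0$ comes with no modulus in $\e$.
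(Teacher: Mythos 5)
Your argument is correct and follows essentially the same route as the paper: perturb $F$ by $\e(r-c)$ to gain strict monotonicity, apply Theorem \ref{thm:conv<0} to the perturbed flows, squeeze $\f$ between them via the parabolic comparison principle, and conclude by stability of the elliptic Dirichlet problem as $\e\to 0$ (which the paper likewise takes as known, citing \cite{GKZ08}). The only cosmetic difference is that you apply the comparison principle in the unperturbed equation (hence need uniform-in-$\e$ bounds on $\f^{\e,\pm}$), whereas the paper compares in the perturbed equations using the a priori bounds $m_0\le\f\le M_0$ on $\f$ itself.
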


\begin{proof}
 We are going to use Theorem \ref{thm:conv<0}
 by considering the perturbed Monge-Amp\`ere flows associated to the functions $ F (z,r) + \e (r - c)$, where $\e > 0$ is small   and $c$ is a carefully chosen constant.

We first establish an upper bound. Set $M_0 := \sup_{\bar \Omega} \f_0$. Since the constant $M_0$ is a supersolution to the Monge-Amp\`ere flow associated to $(F,\mu)$ with boundary value $M_0$, it follows from the comparison principle that 
$$
\f (t,z) \leq M_0 \; \text{ in } \; \R^+ \times \Omega.  
$$

Fix  $\e > 0$ and set $ F^\e  (z,r) := F (z,r) + \e (r - M_0)$.
Let $\f^\e (t,z)$ be the solution of the complex
Monge-Amp\`ere flow associated to $(F^\e,\mu)$ with  Cauchy-Dirichlet data $\f^\e_0 = \f_{0}$ i.e. 
$$
 (dd^c \f^\e_t)^n=e^{\partial_t \f^\e + F (z,\f^\e) + \e (\f^\e - M_0)} \mu(z). \leqno (\star)_\e
$$

 Observe that $\f$ is a subsolution to the  flow $(\star)_\e$ since $\f \leq M_0$. 
The comparison principle therefore implies $\f \le \f^\e$ in $\R^+ \times \Omega$. 

Let $u^\e$ be the solution
of the degenerate elliptic Monge-Amp\`ere equation $(dd^c u^\e)^n= e^{F (z,u^\e) + \e (u^\e - M_0)} \mu(z)$ with Dirichlet data $u^\e|_{\partial \Omega} = \f_{0}|_{\partial \Omega}$ (\cite{Ceg84}).
It follows from the stability of the solutions to the Dirichlet problem 
for the complex Monge-Amp\`ere operator that $u^\e$ uniformly converges  to $u$ in $\Omega$  as 
$\e \to 0$ (see \cite{GKZ08}). 

Fix $\delta>0$ and choose $\e$ such that $u - \delta \leq  u^\e \le u +\delta$. 
It follows from Theorem \ref{thm:conv<0}
that $\lim_{t\to \infty} \f^\e_t (z)= u^\epsilon (z)$ uniformly in $\Omega$. Therefore there exists 
$T_\delta > 1$ so that for $t \geq T_\delta$ and $z \in \Omega$, $ \f_t(z) \le  u (z) + 2 \delta$.
This is the desired upper bound.

\smallskip

We now establish a lower bound.
Observe first that the family $(\f_t)$ is uniformly bounded from below. Indeed let $\rho$ be a strongly psh defining function for $\Omega$ and choose $B > 1$ such that 
$$
B^n (dd^c \rho)^n \geq e^{F (z,0)} \mu (z)
$$
 pointwise in $\Omega$. Since $\rho \leq 0$, the function $ \p (t,z) := B \rho (z)$ is a  subsolution to the parabolic Monge-Amp\`ere equation 
$ (dd^c \p_t)^n = e^{\partial_t \p + F (z,\p)} \mu (z) $. 
It therefore follows from the comparison principle that 
$$
B  \rho (z) - \f (t,z) \leq \max_{\bar \Omega} (B \rho - \f_0)_+
\text{ in }
\R^+ \times \Omega.
$$ 
Thus $\f$ is uniformly bounded from below by a constant $m_0$ in $\R^+ \times \Omega$. 

 We now consider the perturbed Monge-Amp\`ere flow  associated to $(F_\e,\mu)$ with  Cauchy-Dirichlet data $\f^\e_0 = \f_{0}$, where $ F_\e  (z,r) := F (z,r) + \e (r - m_0)$. 
Observe that $\f$ is a supersolution of this new perturbed flow since $\f \geq m_0$. 
Arguing as above shows the existence of $T_\delta' > 1$ such that 
$$
\f_t(z) \ge  u (z) - \delta
\text{ for }
t \geq T_\delta'
$$ 
and $z \in \Omega$. This proves that $\f_t \to u$  uniformly in $\Omega$.
\end{proof}


\begin{thebibliography}{99}


\bibitem[Ale39]{Ale39}  A.D.~Alexandrov: {\em Almost everywhere existence of the second order differential of a convex function and some properties of convex functions}. Leningrad. Univ. Ann. (Math. Ser.) {\bf 37} (1939), 3--35. (Russian)


 \bibitem[AHP12]{AHP12} B.~Avelin, L.~Hed, H.~Persson: { \em Approximation and Bounded Plurisubharmonic Exhaustion Functions Beyond Lipschitz Domains}. Preprint arXiv:1210.7105.


\bibitem[AFS08]{AFS08} D.~Azagra, D.~Ferrara; B.~Sanz: {\em Viscosity Solutions to second order partial differential equations on Riemannian manifolds}. J. Diff. Equations {\bf 245} (2008),
307–336.



 
 

 
 
 \bibitem[B09]{B09} R.~Berman: {\em Bergman kernels for weighted polynomials and weighted equilibrium measures of $\mathbb{C}^{n}$. } Indiana Univ. Math. J. 58 No. 4 (2009), 1921–1946
 
\bibitem[BD09]{BD09} R.~Berman,  J.-P.~Demailly: {\em Regularity of plurisubharmonic upper enveloppes in 
 big cohomology classes.}  Perspectives in analysis, geometry, and topology,  3966,
Progr. Math., 296, Birkh\"auser/Springer, New York, 2012. 

 \bibitem[BT76]{BT76} E.~Bedford, B.A.~Taylor: {\em The Dirichlet problem for a complex Monge-Amp\`ere equation.}  
   Invent. Math. {\bf 37}  (1976), no. 1, 1--44. 
   
\bibitem[BT82]{BT82} E.~Bedford, B.A.~Taylor: {\em  A new capacity for plurisubharmonic functions. } Acta Math.  {\bf 149}  (1982), 1--40.











    

  
  
  


 \bibitem[Bre59]{Bre59} H.J.~Bremermann: {\em On a generalized Dirichlet problem for plurisubharmonic functions and pseudo-convex domains. Characterization of Šilov boundaries.} Trans. Amer. Math. Soc. 91 (1959) 246–276.
  
  \bibitem[Car04]{Car04}  P.~Cardaliaguet:
{\em Solutions de viscosit\'e d'\'equations elliptiques et paraboloiques non lin\'eaires.} Notes de Cours, Universit\'e de Rennes, Janvier 2004. 
  
  
 \bibitem[Ceg84]{Ceg84} U.~Cegrell: {\em On the Dirichlet problem for the complex Monge-Amp\`ere operator.} Math. Z. {\bf 185} (1984), no. 2, 247--251. 

 

 
 
\bibitem[CC95]{CC95} L.A.~Caffarelli, X.~Cabr\'e: {\em Fully Nonlinear Elliptic Equations}. American Mathematical Society Colloqium publications, Vol. 43,  1995.   
  

 \bibitem[CIL92]{CIL92} M.~Crandall, H.~Ishii, , P.L.~Lions: {\em User's guide to viscosity solutions of second order 
partial differential equations} Bull. Amer. Math. Soc. {\bf 27} (1992), 1-67. 




 


 
 \bibitem[DI04]{DI04}  J.~Droniou, C.~ Imbert:  {\em Solutions et solutions variationnelles pour EDP  non lineaire}, Cours polycopi\'e (2004), Universit\'e de Montpellier.

 
  
  
 \bibitem[EGZ11]{EGZ11}  P.~Eyssidieux, V.~ Guedj, A.~ Zeriahi: {\em Viscosity solutions to Degenerate Complex Monge-Amp\`ere Equations.} 
  Comm. Pure Appl. Math.  {\bf 64}  (2011),  no. 8, 10591094. 
 
 \bibitem[EGZ13]{EGZ13}  P.~Eyssidieux, V.~ Guedj, A.~ Zeriahi: {\em Continuous approximation of quasi-plurisubharmonic functions}, Preprint 2013, arXiv:1311.2866.

 \bibitem[EGZ14]{EGZ14}  P.~Eyssidieux, V.~ Guedj, A.~ Zeriahi: {\em Weak solutions to degenerate complex Monge-Amp\`ere flows II.} Preprint 2014.
  
\bibitem[FW89]{FW89}  J.-E.Fornaess, J.Wiegerinck:  {\em Approximation of plurisubharmonic functions.}
Ark. Mat. {\bf 27} (1989), no. 2, 257-272.

  
 \bibitem[Gav77]{Gav} B.~Gaveau: {\em M\'ethodes de contr\^ole optimal en analyse complexe.I. R\'esolution d'\'equations de Monge-Amp\`ere.} J. Funct. Anal. {\bf 25} (1977), 391-411. 


\bibitem[GKZ08]{GKZ08} V.~Guedj, S.~Kolodziej, A.~Zeriahi: H\"older continuous solutions to Monge-Amp\`ere equations.
 Bulletin of London Math. Soc {\bf 40} (2008), 1070-1080.


   
 
 
 
 
 \bibitem[HL09]{HL09}  F.R.~Harvey, H.B.~Lawson: {\em Dirichlet duality and the nonlinear Dirichlet problem.}  Comm. Pure Appl. Math.  62  (2009),  no. 3, 396--443.
 
 \bibitem[H\"or94]{Hor94} L.~H\"ormander: {\em Notions of convexity}, Progress in Math., Birkh\"auser (1994).

 
 \bibitem[Ish89]{Ish89} H.~Ishii:  {\em On uniqueness and existence of viscosity solutions of fully nonlinear 
 second-order elliptic PDEs},  Comm. Pure Appl. Math.  {\bf 42}  (1989),  no. 1, 15--45.  

 \bibitem[IL90]{IL90} H.~Ishii, P.L.~Lions:{\em Viscosity solutions of fully nonlinear second-order elliptic
 partial differential equations}, Journ. Diff. Equations {\bf 83} (1990), 26--78.

 \bibitem[IS13]{IS12}  C.~Imbert, L. Sylvestre: {\em Introduction to fully nonlinear parabolic equations}.
{\it An introduction to the K\"ahler-Ricci flow.} Lecture Notes in Math., {\bf 2086}, Springer, Heidelberg, 2013.

 
 \bibitem[Jen88]{Jen88} R.~Jensen: {\em The maximum principle for viscosity solutions of fully nonlinear second-order partial differential  equations}, Arch. Rat. Mech. Anal. {\bf 101} (1988), 1-27.






 
 


 
 
 

\bibitem [Sad81]{Sad81} A.~Sadullaev: {\em Plurisubharmonic measures and capacities on complex manifolds.} (Russian) Uspekhi Mat. Nauk 36 (1981), no. 4 (220), 53–105, 247.
 
\bibitem [Sib87]{Sib87} N.~Sibony: { \em Une classe de domaines pseudoconvexes.}  
 Duke Math. J. {\bf 55} (1987), no. 2, 299-319. 

\bibitem[ST12]{ST1} J.~Song, G.~Tian: {\em Canonical measures and K\"ahler Ricci flow.} J. Amer. Math. Soc. {\bf 25} (2012), 303--353. 
 
\bibitem[ST09]{ST2} J.~Song, G.~Tian: {\em The K\"ahler-Ricci flow through singularities.} Preprint arXiv:0909.4898 (2009).

 \bibitem[W12]{W12} Y.~Wang: {\em A Viscosity Approach to the Dirichlet Problem for Complex Monge-Amp\`ere Equations. } Math. Z. {\bf 272} (2012), no. 1-2, 497--513. 
    
  
 
 
 
 
\bibitem[Ze13]{Ze13} A.~Zeriahi: {\em A viscosity approach to degenerate complex Monge-Amp\`ere equation}.  Ann. Fac. Sci. Toulouse Math. (6) 22 (2013), no. 4, 843--913.

 
\end{thebibliography}
 \end{document}